\newcommand{\be}{\begin{equation}}
\newcommand{\ee}{\end{equation}}
\newcommand{\bfg}{\begin{figure}}
\newcommand{\efg}{\end{figure}}
\newcommand{\dlim}{\displaystyle\lim_{n\rightarrow \infty}}
\newcommand{\nn}{\nonumber}
\providecommand{\abs}[1]{\vert#1\vert}
\providecommand{\norm}[1]{\Vert#1\Vert}
\newcommand{\fl}[1]{\lfloor{#1}\rfloor}
\def\cD{\mathcal{D}}
\def\cE{\mathcal{E}}
\def\cH{\mathcal{H}}
\def\cM{\mathcal{M}}
\def\cN{\mathcal{N}}
\def\cW{\mathcal{W}}
\def\cL{\mathcal{L}}
\def\bE{\mathbb{E}}
\def\bN{\mathbb{N}}
\def\bP{\mathbb{P}}
\def\bQ{\mathbb{Q}}
\def\bR{\mathbb{R}}
\def\bV{\mathbb{V}}
\def\bZ{\mathbb{Z}}
\def\bx{\textbf{x}}
\def\r{\rho}
\def\l{\lambda}
\def\dks{d_{\xi}}
\def\hks{h_{\xi}}
\def\e{\varepsilon}
\def\xvec{\mathbf{x}}
\def\yvec{\mathbf{y}}
\def\vvec{\mathbf{v}}
\def\uvec{\mathbf{u}}
\def\avec{\mathbf{a}}
\def\om{\omega}
\def\e{\varepsilon}
\def\l{\ell}
\def\zsq{Z^{\square}(m,n)}
\def\rf{J}
\def\lmgf{M}
\DeclareMathOperator\dtot{tot}
\DeclareMathOperator\inter{int}
 \def\wt{\widetilde}   
\begin{document}
\title{\sc{Positive and Zero Temperature Polymer Models}}
\author{Nicos Georgiou}
\degree{Doctor of Philosophy}
\dept{Mathematics}
\thesistype{dissertation}
\beforepreface
\prefacesection{Abstract}
We present results about large deviations  and laws of large numbers for various polymer related quantities.
 
In a completely general setting and strictly positive temperature, 
we present results about large deviations for directed polymers in 
random environment. We prove quenched large deviations 
(and compute the rate functions explicitly) for the exit point of the polymer chain and 
the polymer chain itself.

We also prove existence of the upper tail large deviation rate function for the logarithm 
of the partition function. In the case where the environment weights have certain 
log-gamma distributions the computations are tractable and allow us to compute the rate 
function explicitly. 

At zero temperature, the polymer model is now called a last passage model. With a particular choice of random weights, the last passage model 
has an equivalent representation as a particle system called Totally Asymmetric Simple Exclusion Process (TASEP). 
We prove a hydrodynamic limit for the macroscopic particle density and current for TASEP
with spatially inhomogeneous jump rates 
given by a speed function that may admit discontinuities. The limiting
density profiles are described with a variational formula.
This formula enables us to compute explicit density profiles 
even though we have no information about the invariant distributions
of the process.  
In the case of a two-phase flux  for which a suitable 
 p.d.e.\ theory has been developed 
we also observe that the limit profiles are 
entropy solutions of the corresponding scalar conservation law
with a discontinuous speed function. 
\prefacesection{Acknowledgements}
I know that many people consider me to be a storyteller.  Many times during
the past five years people would just pop into my office to chat and have a
laugh, ask me to go to the terrace to hang out, or to make plans to go out for
dinner later.  The end result usually involved me telling stories.  Some
were sad, shocking, or blatantly funny.  To be fair, when I tell a
story there is always a question of reality vs artistic liberty!  But by now
people have favorite stories that they start referencing and asking me to
tell again - such as ``The one where..." or ``The one with ...".  I dedicate
this section to those who helped me create all the stories during my years
in Madison.  So even though I am going to be somewhat vague in what follows,
I hope that they will each understand their part.


First and most importantly I would like to thank my advisor (and hopefully
by now my friend), Timo Sepp{\"a}l{\"a}inen. Naturally, one's advisor is the
source of many tales and I am glad to say that in my case they are all
good.  Timo is the reason I can tell ``The one where Nicos found an
advisor", ``The one when Nicos published his first paper" , ``The one with a
flood of e-mails", and ``The one with the Sensei".  In fact, Timo is the
only reason I am able to write a thesis.  During the stressful time between
passing my quals and making sure I could actually do research by myself, he
was the only light in an otherwise dark world.  Always patient (ridiculously
so) and careful, he taught me so many things about so many things (math and otherwise) that, if listed, would be at least as long as this thesis. I am
especially grateful for ``The one when Nicos was not once called stupid..."
and ``The one where people wanted to work with Nicos' advisor".


Also, I would like to thank the other faculty members of the Probability
group: Tom Kurtz, David Griffeath, Benedek Valk{\'o} and David Anderson.  All of
the them are responsible for stories like `` The one with your professor in
[such and such] class".  Having lunches and dinners with you guys was most
often the highlight of my week.  My gratitude extends even further to Tom
Kurtz for giving me a Research Assistanship during my last semester from his NSF grant DMS-0805793. 
As I recall, ``The one where Nicos got money from
someone that wasn't his advisor," made many graduate students
thinking about switching to probability. 


Naturally, I cannot forget the student members of the probability group.  To
the original seven - Ankit, Arnab, Hao, Hye-Won, Mathew, Rohini and Sabrina
- thank you for ``The one where Nicos was convinced to do Probability",
``The first one in Evanston", and ``The magnificent 8".  It was very nice to
see a group dynamic as clear and refreshing as you guys made the probability
group.

This is a good place to thank my former office-mate Annette.  She was the
one who pointed out the obvious and convinced me that this group of people
liked me and would be happy to be my mathematical siblings and cousins.

I have tried to carry on the `closeness' of the probability students as older
ones graduated and younger ones joined.  Thanks go to my younger
(mathematical) cousins Diane and Maso for ``The one with the reading course"
and ``The one with the practice talks".  Hopefully I made the group as warm
and fuzzy for you as the others did for me.


No one needs to walk alone in this world-especially during a Ph.D. program.
I was very lucky with my inner circle of friends. Firstly, to my roommate
Andrea, thanks for ``The one where Andrea asked me to be his roommate"
(timeless classic), ``The one with the 14 hour sleep" and ``The one with the
cleaning" as well as all of the episodes of the sitcom I am going to write
about us.  Having someone to talk to after a long a day and just sitting
around watching your futile attempts to convince me that $c \neq \om_1$ was
a great source of stress relief.

To Achilles and Kostas (and their parents Alex and Mariam), thank you for
awesome experiences like ``The one with the gym semester" and ``The one with
soda and salt over Easter".  Truly, you were a substitute family for me in a
foreign country.  Living together in the same building was, in all honesty,
the best idea we ever had!


My wing-men, Dan and Johana are responsible for ``The first free
Valentine's day", and ``The one with the weird bar-hopping".  They really
showed me that my friends are awesome and kind, as well as giving me a
renewed trust in people.  If Dan is reading this: please get a grip!  Finally,
Sarah, thanks for ``The one with Kongregate", ``The one with the olive
branch", and various others.  You were truly the best office-mate one can
hope for - especially considering that our desk arrangement requires our desk chairs to occupy the same space.  You are probably the only person who saw all my weird mood swings
and always gave me rational advice.  I am extremely lucky to call you my
friend.


Finally, I would like to thank my family and friends in Cyprus.  To my
parents Andri and Christos, and grandparents Rodou and Giwrgos, thanks for
always supporting me and feeling proud of me.  Even though you have never really
understood what it is that I actually do, you always understood that it is
rare for someone like me.  I would also like to thank them for abandoning
all of their weird schemes that involved finding me a wife.  Thanks also to
my sister Eleni and her husband Simos for giving my parents several
opportunities (including the forth-coming Gandalf and/or Xena) to dote over someone else
for a change.

To my friends back in Cyprus - Stella, Nia, Theodoros, Dafni, Fanos, Fanis
and Loizos - thank you for always making me feel wanted.  It has been
somewhere between five and ten years since we've lived in the same country,
but somehow I am convinced that even if a hundred years pass we will still
be friends and drive each other crazy.  Even if it is going to be through
Skype!  Any possible test for true friendship you passed with flying colors,
so a great THANK YOU might not be enough. But, since you might never read this 
you should believe whatever I tell you - that I wrote pages and pages thanking you.
;)

\phantom{
xxxxxxxxxxxxxxxxxxxxxxxxxx} \,\,Nicos 

\listoffigures
%

\afterpreface


\chapter{Introduction}

\section{Polymers at finite temperature and corner growth models}

We begin by presenting the two main models that are discussed in this dissertation. After the two models are introduced we offer a connection between the two of them (namely one can be viewed as a limiting case of the other). The two remaining sections of the chapter can be viewed as an informal introduction to the material that follows: Some basic definitions, discussion on classical results and an idea of the kind of questions we are asking. At the end of the chapter we describe the organization of the thesis.

\subsection{General polymer models} 

A directed polymer in random environment  is a random walk path that interacts with a random environment. The polymer chains live in $\bZ^{d}\times\bZ_+$, where the last coordinate denotes time.  The space of environments is denoted by  $\Omega = \{ \om(\uvec, n): \uvec \in \bZ^{d}, n\in \bZ_+ \}$  and is equipped with a probability measure $\bP$, so that under $\bP$ the random variables $ \om(\uvec, n)$ are i.i.d.\ for all $\uvec$, $n$. 
 
The two models under consideration are \textit{directed polymers with free endpoints} and \textit{directed polymers with constrained endpoints}. Here, \textit{directed} means that the last coordinate is always increasing by $1$ at each time step. This allows for the polymer chain in $d+1$ dimensions to be viewed as the path of a $d$-dimensional nearest neighbor random walk.

 We assume that at time $t=0$, the starting point of the polymer chain (the random walk) is anchored at $\mathbf0 \in Z^d$. For each $m \in \bN$, define the set of possible endpoints for the polymer to be  $\cE(m)$. For a fixed $(\uvec,m)$ in  $\mathcal{E}(m)$, the set of all polymer chains starting from $(\mathbf0,0)=x_0$   and ending at $(\uvec,m)=x_m$  is 
\begin{align}
\mathcal{R}(\uvec, m)=\big\{ x_{0,m}:& x_{0,m}=(x_0, x_1, \,... \, , x_m),\, x_k - x_{k-1}= (\pm e_i, 1) \label{cm} \\
&\text{ for some }1\le i \le d, \text{ for all } 1\le k\le m,\, (\mathbf0,0)=x_0,\,(\uvec,m)=x_m\big\}\notag,
\end{align}
 where $e_i, 1\le i \le d$ is the standard basis of $\bR^d$.  

The point-to-point partition function is defined by
\be
Z^{\beta}(\uvec, m)= \sum_{x_{0,m} \in \mathcal{R}(\uvec, m) }\prod_{j=1}^m e^{\beta \om(x_j)},
\label{parf0}
\ee
and the total (or the point-to-line) partition function can be defined by
\be
Z^{\beta}_m= \sum_{\uvec \in \cE(m)}\sum_{x_{0,m} \in \mathcal{R}(\uvec, m) }\prod_{j=1}^m e^{\beta \om(x_j)},
\label{parf00}
\ee

The parameter $\beta$ is what is known in the literature as the \textit{inverse temperature} and is assumed without loss of generality to be positive. 

Under a fixed environment $\om$, the polymer chain $x_{0,m}$ is selected according to the quenched probability measures
 \be
Q^{\omega, \beta}_{\uvec, m}\{ x_{0,m}\} = (Z^{\beta}(\uvec, m))^{-1}\prod_{j=1}^m e^{\beta \om(x_j)},
\label{qpp}
\ee
and
\be
Q^{\omega, \beta}_{m}\{ x_{0,m}\} = (Z^{\beta}(m))^{-1}\prod_{j=1}^m e^{\beta \om(x_j)},
\label{tpp}
\ee
respectively for each of the models described above.

Let us momentarily restrict our attention to dimension $1+1$. The polymer chain starts by being anchored at $(0,0)$ and under a fixed environment, at time $n$ the chain is chosen according to the measures  $Q^{\omega, \beta}$. The chain lives inside the cone $\{\uvec=(u_1, u_2) \in \bZ\times \bZ_+ : u_2 \ge |u_1| \}$. We rotate the picture clockwise by $45$ degrees. The polymer chain now becomes a path  $\tilde{x}_{0,m}= \{ (0,0) = x_0, x_1, \cdots, x_m \}$ in the first quadrant $\bZ_+^2$ where the differences $x_k - x_{k-1}$  are a standard basis vector. Such a path we call an \textit{up-right path}. 

The advantage of this viewpoint resides in the point-to-point model: For any vector $\uvec \in \bZ_+^2$ we specify as an endpoint, it is guaranteed that exponentially many paths start at $(0,0)$ and end at $\uvec$, i.e. $\uvec$ is an admissible endpoint. Unfortunately we lose information about the time (now the time axis is the main diagonal) but this does not affect the analysis and the limits of the main theorems. 

From this point onwards, all models in this dissertation are about paths that live in the first quadrant and are up-right paths. We denote the set of up-right paths from $(0,0)$ to $(\uvec)$ by $\Pi(\uvec)$. More precise details are available in the following chapters. 

\subsection{Corner growth model}

The corner growth model in two dimensions is a model of a randomly growing cluster that over time covers larger and larger portions of the first quadrant $\bZ^2_+$. At the outset, each coordinate $(i,j)$ is given a weight $\om(i,j)$. In the language of the previous section, we have now a fixed environment and we want to study the evolution of the random cluster according to the following rules.

The general rule is that $\om(i,j)$ is the time it takes for the random evolution to occupy site
$(i, j)$. This can only be done only after its two neighbors to the left and below are either occupied or lie outside $\bZ^2_{+}$. At the boundaries the rule is that point $(0, 0)$ needs no occupied neighbors to start, points $(1, j)$ on
the left boundary wait only for the neighbor below to be occupied, and points $(i, 1)$ on the bottom
boundary wait only for the left neighbor to be occupied (see Fig. \ref{cluster}). 

\begin{figure}
\begin{center}
\includegraphics[ scale=1.3, trim= 130 350  50 350 ]{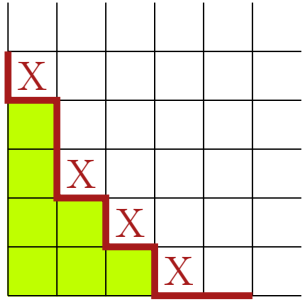}
\end{center}
\caption[Corner growth of a random cluster]{The shaded squares represent sites that are occupied at this particular time. The thick line represents the boundary of the cluster and the squares marked with $X$ are the `` growth sites ". The time it takes each growth site to be infected (shaded) is $\om_{0,4}, \om_{1,2}, \om_{2,1}, \om_{3,0}$ respectively. These times start elapsing as soon as both the left and lower neighbors become shaded.}
\label{cluster}
\end{figure}

The quantity of interest is that of the \textit{last passage time}. Given a site $(i,j)$ the last passage time, $T(i,j)$, is the time when the site $(i,j)$ becomes part of the evolving cluster. Using the evolution described above, it can be recursively computed (with appropriate boundary conditions) to be
\be
T(i,j) = T(i-1, j)\vee T(i, j-1) + \om_{i,j}.
\label{lptrec}
\ee
Equation \eqref{lptrec} says that $(i,j)$ becomes part of the cluster only after both $(i-1, j)$ and  $(i, j-1)$ are part of the cluster, and after that happens, time  $\om_{i,j}$ elapses. Assume without loss that $T(0,0) = \om(0,0)=0$. An easy induction argument then yields that 
\be
T(i,j) = \max_{x_{0, i+j} \in \Pi(i,j)} \sum_{k=1}^{i+j} \om_{x_k}.
\label{LPT}
\ee

In the case where the $\omega$  weights are continuous and an endpoint $\uvec = (i,j)$ is specified, there is a unique path $x_{0,i+j}$ for $\bP$- a.e. $\om$ that attains the last passage time. We call that the \textit{maximal path} and denote it by $x^{\text{max}}_{0, i+j}$. In this respect, we can define a (degenerate) quenched measure on the paths,
\be
\widetilde{Q}^{\om}_{\uvec, i+j}\{ x_{0, i+j}\} = \delta_{x^{\text{max}}_{0, i+j}} = \begin{cases}
                                                                                                                                             1, &  x_{0, i+j}=x^{\text{max}}_{0, i+j}\\
                                                                                                                                             0, & \text{otherwise.}
                                                                                                                                             \end{cases}
\label{cgmqm}
\ee
 If the $\om$ weights have discrete distributions, is possible that there are more than one maximal path. When that happens, the quenched probability measure on the paths is the uniform measure on maximal paths. 
 
\subsection{Connection of the two models}

Now we are ready to justify the title of this dissertation.
The connection between the polymer models and the last passage time models comes via the parameter $\beta$. As we let $\beta$ tend to $\infty$, under a fixed environment, the quenched probability measure $Q^{\om, \beta}$ converges weakly to a delta mass, given by \eqref{cgmqm}. This becomes precise in the context of the next proposition.

\begin{proposition}
Let $\uvec \in \bZ^2_+$, $\uvec=(i,j)$ and let $m=i+j$. Fix an environment $\omega$. Then, the probability measures defined by \eqref{qpp} converge weakly
\be
Q^{\beta, \om}_{\uvec, m}\{ x_{0, m}\}\Longrightarrow \widetilde{Q}^{\om}_{\uvec, m}\{ x_{0, m}\}= \delta_{x^{\text{max}}_{0, i+j}} , \quad\text{as } \beta \to \infty.
\ee 
\end{proposition}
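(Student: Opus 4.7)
The plan is to exploit the fact that $\Pi(\vec{u})$ is a finite set (its cardinality is $\binom{m}{i}$ after the 45-degree rotation), so that weak convergence of probability measures on this set reduces to pointwise convergence of the mass functions. Thus the task boils down to showing
\[
Q^{\beta,\omega}_{\vec{u},m}\{x_{0,m}\} \longrightarrow \mathbf{1}\{x_{0,m}=x^{\max}_{0,m}\}
\]
for each up-right path $x_{0,m}\in \Pi(\vec{u})$.

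The key algebraic step is to factor the maximum energy out of the Gibbs weights. Writing $S(y_{0,m})=\sum_{j=1}^m \omega(y_j)$ and recalling from \eqref{LPT} that $T(i,j)=\max_{y_{0,m}\in\Pi(\vec{u})} S(y_{0,m})$, I would divide both numerator and denominator of \eqref{qpp} by $e^{\beta T(i,j)}$ to obtain
\[
Q^{\beta,\omega}_{\vec{u},m}\{x_{0,m}\}
= \frac{\exp\!\bigl(\beta\,[S(x_{0,m})-T(i,j)]\bigr)}{\displaystyle\sum_{y_{0,m}\in\Pi(\vec u)}\exp\!\bigl(\beta\,[S(y_{0,m})-T(i,j)]\bigr)}.
\]
Each exponent is non-positive, and equals $0$ precisely on the (finite) subset of maximal paths. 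Hence every non-maximal term decays to $0$ exponentially fast in $\beta$, while each maximal term stays equal to $1$.

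Taking $\beta\to\infty$, the denominator converges to the number $N(\omega)$ of maximal paths, and the numerator converges to $1$ if $x_{0,m}$ is maximal and to $0$ otherwise. Therefore the limit of $Q^{\beta,\omega}_{\vec{u},m}$ is the uniform measure on the set of maximal paths, which is exactly the definition of $\widetilde Q^{\omega}_{\vec{u},m}$ in \eqref{cgmqm}. To match the delta-mass formulation in the statement of the proposition, I would then invoke the assumption (implicit in the use of $\delta_{x^{\max}}$) that the $\omega$-distribution is continuous, so that $N(\omega)=1$ for $\mathbb{P}$-almost every $\omega$; for fixed such $\omega$ the limit is exactly $\delta_{x^{\max}_{0,m}}$.

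There is no real obstacle here: the argument is a one-line Laplace/zero-temperature limit on a finite state space, and the only point requiring a modicum of care is the handling of ties, which is dealt with either by the a.s.\ uniqueness in the continuous case or by interpreting $\widetilde Q^{\omega}$ as uniform on maximal paths as was done just below \eqref{cgmqm}.
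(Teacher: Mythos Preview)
Your proof is correct and follows essentially the same approach as the paper: both factor out $e^{\beta T(i,j)}$ (equivalently $\prod_j e^{\beta\omega(x^{\max}_j)}$) and observe that the contributions from non-maximal paths decay to zero. Your framing via pointwise convergence of mass functions on the finite set $\Pi(i,j)$ is slightly more streamlined than the paper's test-function computation, and your explicit handling of ties (limit is uniform on maximal paths, then invoke a.s.\ uniqueness under continuous weights) is a nice touch, but the substance is the same zero-temperature Laplace argument.
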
  
\begin{proof}
Let $f$ a function on the space $\Pi(\uvec)$ of up-right paths. Then
\begin{align}
\int_{\Pi(\uvec)}f \, dQ^{\beta, \om}_{\uvec, m} &= \sum_{x_{0, m}\in \Pi(\uvec)}f(x_{0,m})Q^{\beta, \om}_{\uvec, m}\{x_{0,m}\}\notag \\  
&= \sum_{x_{0, m}\in \Pi(\uvec)}f(x_{0,m})Z^{\beta}({\uvec, m})^{-1}\prod_{j=1}^{m}e^{\beta \om(x_j)}\notag \\  
&= f( x_{0,m}^{\text{max}})Z^{\beta}({\uvec, m})^{-1}\prod_{j=1}^{m}e^{\beta \om(x^{\max}_j)} \notag\\
&\phantom{xxxxxxxxxx}+ \sum_{\substack{x_{0, m}\in \Pi(\uvec)\\ x_{0, m} \neq x_{0,m}^{\text{max}}}}f(x_{0,m})Z^{\beta}({\uvec, m})^{-1}\prod_{j=1}^{m}e^{\beta \om(x_j)}\notag \\  
&= f( x_{0,m}^{\text{max}})Z^{\beta}({\uvec, m})^{-1}\prod_{j=1}^{m}e^{\beta \om(x^{\max}_j)} \label{aaaaa}\\
&\phantom{xxxxxxxxxx}\times\Bigg( 1 + \sum_{\substack{x_{0, m}\in \Pi(\uvec)\\ x_{0, m} \neq x_{0,m}^{\text{max}}}}f(x_{0,m})e^{\sum_{j=1}^{m}\beta (\om(x_j) - \om(x^{\text{max}}_j))}\Bigg)\label{bbbbb}
\end{align}
As $\beta \to \infty $, \eqref{bbbbb} tends to $1$ because all the exponents are negative. We are going to show that $Z^{\beta}({\uvec, m})^{-1}\prod_{j=1}^{m}e^{\beta \om(x^{\max}_j)}$ tend to $1$ as $\beta \to \infty$. Then the proposition follows from \eqref{aaaaa} and $\eqref{bbbbb}$.

From the definitions, $Z^{\beta}({\uvec, m})^{-1}\prod_{j=1}^{m}e^{\beta \om(x^{\max}_j)} \le 1$. For a lower bound, observe that for any $\delta > 0$ and $\beta$ sufficiently large
\begin{align}
Z^{\beta}({\uvec, m})\Big(\prod_{j=1}^{m}e^{\beta \om(x^{\max}_j)}\Big)^{-1}&= \sum_{x_{0, m}\in \Pi(\uvec)}e^{\sum_{j=1}^{m}{\beta (\om(x_j) - \om(x^{\max}_j) )}}\notag\\ 
&=1 + \sum_{\substack{x_{0, m}\in \Pi(\uvec)\\ x_{0, m} \neq x_{0,m}^{\text{max}}}}e^{\sum_{j=1}^{m}{\beta (\om(x_j) - \om(x^{\max}_j) )}}\notag\\ 
&\le 1+\delta. \notag\quad\quad\quad\phantom{xxxxxxxxxxxxxxxxxxxx}\qedhere
\end{align} 
\end{proof}

\section{Large deviations}

The theory of large deviations is concerned with the study of rare (improbable) events. Assume you have a sequence $\{X_n\}_{n\in \bN}$ of real random variables on $\Omega$. A natural question that arises is to compute the limiting probability $\lim_{n\to \infty} \bP\{ X_n \in A \}$ where $A$ is a fixed measurable set on $\bR$. When the events $\mathcal{A}_n = \{ X_n \in A \}$ are ``rare'' the limiting probability can be $0$. This information, while not particularly helpful, leads to a deeper question: \textit{How fast} does it go to $0$? For example if the probabilities are now summable, one can apply the Borel-Cantelli lemma.

In a more general setting, instead of a sequence of random variables, one can use a sequence of probability measures $\mu_n$ on a Polish measurable space $\mathcal{X}$ (in the example above, $\mu_n = \bP\{ X_n \in \cdot \}$ and $\mathcal{X} =\bR$). The measures $\mu_n$ live in the space of probability measures on $\mathcal{X}$, $\mathcal{M}_1(\mathcal{X})$ and we say that they satisfy a large deviation principle if the following definition holds.    

\begin{definition}  Let $I : \mathcal{X} \mapsto [0,\infty]$ be a lower semicontinuous function
and $r_n \to \infty$ a sequence of positive constants. A sequence of probability
measures $\{\mu_n\}_n \subseteq \cM_1(\mathcal X)$ is said to satisfy a large deviation principle with
rate function $I$ and normalization $r_n$ if the following inequalities hold for all
closed $F \subseteq \mathcal X$ and all open $G \subseteq \mathcal X$:

\be
\varlimsup_{n\to \infty}\frac{1}{r_n}\log \mu_n(F)\le - \inf_{F} I;
\label{upra}
\ee

\be
\varliminf_{n\to \infty}\frac{1}{r_n}\log \mu_n(G)\ge - \inf_{G} I;
\label{lora}
\ee

\end{definition}

When the sets $\{I\le c \}$ are compact for all $c \in \bR$, we say $I$ is a \textit{tight rate function}. It is of interest to find explicit rate functions, since they offer an exact measurement of the rare event. For example, insurance companies can use that information to decide on a fair premium for the customer.  

\section{Interacting particle systems and hydrodynamic limits}  

In full generality, interacting particle systems consist of finitely or infinitely many particles that evolve in space and time according to given transition probabilities or rates, with some interaction rules imposed on the particles. A particle system that has been extensively studied, is the  \textit{Totally Asymmetric Simple Exclusion Process} (TASEP). It is intimately connected with the last passage time, assuming exponential weights. 

 Assume that particles occupy integer sites. There is at most one particle at each site (Simple).  Each particle attempts to move one unit to the right (Totally Asymmetric) with rate $1$ . The jump is suppressed with probability $1$ if the target site is occupied (Exclusion).

One-dimensional TASEP can be constructed graphically by assigning independent mean $1$ Poisson processes (called \textit{clocks}) on each integer site. The vertical direction now becomes time. As time progresses, a particle on site $i$ attempts to jump at the Poisson event times of the site it occupies, and the jump happens with probability $1$ as long as the exclusion rule is satisfied. With  probability 1, two adjacent Poisson processes cannot have simultaneous events, and for every time $t$ there are infinitely many Poisson processes with no events before $t$, so one can study the temporal evolution of the system up to time $t$ in a rectangle $[-M,N]\times[0,t]$ around the origin. 

 The coupling with the corner growth model follows if we run a TASEP starting from step initial conditions: particles start by occupying only the negative integers and are labeled so that particle $ i$ starts on site $-i$. Then the last passage time $T(m,n)$ is equal in distribution to the time it takes the $n$-th particle to reach site  $m-n$. 

  Consider
 a sequence of exclusion processes $\eta^n = (\eta^n_i(t): i \in \bZ, \, t\in\bR_+)$ 
  indexed by $n \in \bN$. For each $i$ and fixed $t$, $\eta^n_i(t) = 1$ if and only if  
there is a particle present at site $i$, at time $t$.
These processes are constructed on a common probability 
space that supports the initial configurations $\{\eta^n(0)\}$ 
and the Poisson clocks of each process.  The clocks of
process $\eta^n$ are assumed to be independent of its initial state $\eta^n(0)$.

Starting from arbitrary particle initial conditions and for fixed time $t$ define the sequence of \textit{occupation measures}
\be
\mu^n_{t}([a,b])= \frac{1}{n}\sum_{i=\fl{na}+1}^{\fl{nb}}\eta^n_i(nt) = \frac{1}{n}\sharp\{ \text{ particles in } [na, nb] \text{ at time } nt\}.
\label{occmeas}
\ee

 Under some regularity assumptions on the initial conditions,  it is known (e.g. \cite{sepp99K}) that this sequence of measures converges weakly for all $t$ 
\be
\lim_{n\rightarrow \infty} \mu^n_{t}([a,b]) = \int_{a}^{b}\rho(x,t)\,dx,
\label{hydro}
\ee
 where $\rho(x,t)$ (called \textit{the particle density function})  is the unique entropy solution to the scalar conservation law
\be
\partial_t(\rho(x,t))+ \partial_x F(x,\rho(x,t))=0.
\label{scl}
\ee
The initial conditions of the particles correspond to the initial conditions required for uniqueness of the weak solution in \eqref{scl} and $F$ is the particle flux of TASEP, given by $F(x,\rho) = \rho(1-\rho)$. Results of this type go by the name of \textit{hydrodynamic limit} and there are many known generalizations.

\section{Motivation}

Upper tail large deviations for the last passage time have been computed in the case of geometric and exponential weights (see \cite{joha, sepp-large-deviations}). In \cite{sepp-large-deviations}, the rate function was computed via the height function and information about equilibrium distributions for TASEP particles. The equilibrium distributions for TASEP is a result of \textit{Burke's theorem} for M/M/1 queues and they can be interpreted as appropriate boundary weights in the corner growth model. This (Burke) property is ``transferable'' to the log-gamma polymer model. The model was introduced in \cite{sepp-poly}. The model is a $1+1$ dimensional polymer model at temperature $\beta =1$ where for a fixed $\mu > 0$, the weights 
\[\om(i,j) = -\log Y_{i,j}, \quad \text{ and } Y_{i,j}\sim \text{Gamma}(\mu) \]
for $(i,j)\in \bN^2$.

\section{Organization}

 We start (Chapter 2) by describing general polymer models. In Chapter 2 we show some general facts about the partition function and proceed by showing quenched large deviation results for the polymer chains under the quenched measures \eqref{qpp}, \eqref{tpp}. In a completely general setting we show some quantitative properties of the rate functions (existence, continuity, large $\beta$ behavior).

 In Chapter 3 we restrict to a specific $1+1$ dimensional model, the log-gamma model that satisfies a certain property that allows for explicit computations. We present large deviation  results about the logarithm of the partition functions.  

The following two chapters (Chapters 4 and 5) are concerned about hydrodynamic limits of exclusion processes and last passage time where the weights $\om$ are now exponential but with different parameters (so they are not identically distributed). The parameters are decided using a (possibly discontinuous) function $c(x)$. The connection with the particle process TASEP  leads to a further connection with scalar conservation laws with discontinuous coefficients. 



\chapter{Generalities about Polymer Models}\label{generalpolymer}

\section{Introduction}

\subsection{Directed polymers with constrained endpoint in a rectangle} 

For $\uvec\in\bZ^d_+$ define the set of directed polymer chains from $\vvec$ to $\uvec$ with $\| \uvec - \vvec\|_1= m$
\begin{align}
\Pi_{\vvec}(\uvec) &= \big\{ x_\centerdot = \{ \vvec=x_0, x_1,\dotsc, x_{m} = \uvec \}: x_k \in \bZ^d_+,\notag \\  
&\phantom{zxxxxxxxxxxxxxbbbbbbxxxxxxxxxx}x_{k+1} - x_k =\mathbf e_i \text{ for some }1\le i\le d \big\},
\end{align}
where $e_i $ is the $i$-th standard basis vector. 

The point-to-point partition function is in this case defined by    
\be
Z^{\beta}_{\vvec, \uvec}= \sum_{ x_\centerdot  \in \Pi_{\vvec}(\uvec) }\prod_{j=1}^{m} e^{\beta \om(x_j)} = \sum_{ x_\centerdot  \in \Pi_{\vvec}(\uvec) }e^{\beta \sum_{j=1}^m \om(x_j)} .
\label{parf}
\ee 

In the special case where $\vvec = \mathbf0$ we omit the index $\vvec$ from the above notation: The partition function is denoted by  $Z^{\beta}_{\uvec}$ and the set of polymer chains by $\Pi(\uvec)$. Observe that in the definitions given so far, the weight at the starting point is ignored. When that is not the case, we denote 
\[
Z^{\beta, \square}_{\vvec, \uvec}= e^{\beta \om(\vvec)}Z^{\beta}_{\vvec, \uvec}. 
 \]
Under a fixed environment $\om$, fixed endpoint $\uvec$ and fixed inverse temperature $\beta$, the \textit{quenched probability measure} $Q^{\om,\beta}_{\uvec}$ on paths with constrained endpoints, is defined by 
\be
Q^{\om,\beta}_{\uvec}(x_\centerdot ) = \big(Z^{\beta}_{\uvec}\big)^{-1} \prod_{j=1}^{m}e^{\beta\om(x_j)}.
\label{queme}
\ee

\subsection{Directed polymers with constrained endpoint in a rectangle} 

In this variation we fix the number of time steps $m$. Define  
the set of all admissible polymer chains starting from $\mathbf0$  to be
\begin{align}
\Pi_{\dtot}(m)&=\big\{ x_{0,m}: x_{0,m}=\big\{x_0,x_1,\ ,...\, x_m\big\}, \,x_{k+1} -x_{k}= e_i \label{cm2} \\
&\phantom{xxxxxxxxxxxxxxxxxxxxxxxxxxx}\text{ for some }1\le i \le d, \text{ for all } 1\le k\le m\big\}\notag
\end{align}
 where $e_i, 1\le i \le d$ is the standard basis of $\bR^d$.  

For each $m> 0$, the total partition function is defined by 
\be
Z^{\beta, \dtot}_m= \sum_{\uvec \in \bZ_+^d: \|\uvec\|_1=m} Z^{\beta}_{\uvec}.
\label{totparf}
\ee

The corresponding quenched probability measure  $Q^{\om,\beta}_{m}$ on paths in $\Pi_{\dtot}(m)$, is 
\be
Q^{\om,\beta}_{m}(x_{0, m}) = \big(Z^{\beta, \dtot}_m\big)^{-1} \prod_{j=1}^{m}e^{\beta\om(x_j)}.
\label{totqueme}
\ee

\begin{remark}
If the rectangle has dimension $2$, then both models describe the classical directed random polymers in random environment in dimension 1+1 described in the introduction (point-to-point and free endpoint respectively), where the picture is rotated by 45 degrees, so the polymer lives in the first quadrant.
\end{remark}

\subsection{Known results}

\paragraph{Concentration inequalities.} A problem in the area of random polymers in random environment is about the fluctuations (in particular the fluctuation exponent) of $\log Z^{\beta}_m$ that is conjectured to be $2/3$  in the physics  literature. Rigorous results about upper and lower bounds for fluctuations exponents for specific polymer models can be found in \cite{beze-tind-vien, come-yosh-05, meja-04, petermann, wuth-98aihp, wuth-98aop}. The only two cases where the conjectured value of $2/3$  is in fact verified is the log-gamma polymer \cite{sepp-poly} and the brownian polymer \cite{sepp-valk-poly}. 

In absence of exact variance bounds, information can be derived from concentration inequalities. In \cite{Carmona-Hu-Gaussian} an exponential of order $n^{1/3}$ concentration result has been  obtained for the partition function in a Gaussian environment and later this had been generalized for all weights under certain exponential moment assumptions in \cite{come-shig-yosh-03}. 

The sharpest concentration  inequalities are exponential of order $n$,  proven in \cite{Liu-Watbled-2009, watbled-arc} and a certain version used here about the point-to-point free energy in \cite{Comets-Gregorio-arc}.   

\paragraph{Large deviations.} The exponential order $n$ concentration inequality for the partition function is the correct one for the upper tail large deviations and is the correct normalization in order to get a non-trivial upper tail rate function. 

For the lower tail the behavior depends on the distribution of the weights. In \cite{Ben-Ari} three different normalization regimes for the lower tails are shown, depending on the distribution of the weights. It is also mentioned that a normalization $n^2$ is true if the weights are bounded. This is proved in the case where the weights are Gaussian or bounded in \cite{Carmona-Hu-Gaussian} where upper and lower normalizations are proven. 

In the last part of the chapter we prove quenched large deviations for the exit point of the polymer (in the free endpoint case) and quenched large deviations for the polymer path in both the free endpoint and constrained endpoint. The results are described in further detail in Section \ref{results}. 

\paragraph{Notation and conventions.} Throughout we use $\bN$ for positive integers, while $\bZ_{+}$ denotes the set of non-negative integer. Similarly, $\bR_{+}$ denotes the set of non-negative real numbers and $\bR^d_+$ is the set of all vectors with non-negative coordinates. All vectors $(v_1,v_2,\dotsc, v_d) \in \bR^d$ are denoted by boldface  notation $\vvec =  (v_1,v_2,\dotsc, v_d)$. The ordering $\vvec < \uvec$ means $v_1 \le u_1, v_2 \le u_2,\dotsc, v_d \le u_d$. The $d$-dimensional vector with entries equal to $1$ is denoted by $\mathbf{1}= (1,1,\dotsc, 1)$ and correspondingly, $\mathbf{0}= (0,0,\dotsc, 0)$. For the purposes of this dissertation we define for $\yvec = (y_1,y_2,\dotsc, y_d) \in \bR^d$, $n\in \bN$ the \textit{floor of a vector} $\fl{n\yvec} = (\fl{ny_1}, \fl{ny_2},\dotsc, \fl{ny_d})$.

\section{Definitions and Statement of Results}
\label{results}

Some technicalities before stating the general results. We need a technical assumption on the environment $\om$. Henceforth we assume 
\begin{assumption} 
There exists some $\xi> 0$ that depends on the distributions of the weights $\om$ such that 
\be
\bE\big(e^{\xi |\om(\uvec)|}\big)< \infty. 
\ee
\label{Cramerass}
\end{assumption}

This guarantees the existence of a large deviation rate function defined by
\be
I(r) = -\lim_{\e \rightarrow 0}\lim_{n\rightarrow \infty} n^{-1} \log \bP\Big\{ n^{-1}\sum_{i=1}^{n}\om(\uvec_i)   \in (r-\e, r+\e)\Big\}.
\label{c1}
\ee 
All results that follow are valid for $\beta < \xi$. In order to make the proofs cleaner we also assume that for all $\uvec \in \bZ^d_+$, 
\be
\bP\{ \om (\uvec) \ge 0 \} > 0.
\label{ndeg}
\ee

We start by proving some qualitative properties for the limits of the rectangle partition functions and we summarize them in the following two propositions. The superscript $\beta$ is considered fixed and henceforth omitted until the end of the chapter where it becomes relevant to take limits as $\beta \to \infty$.

\begin{proposition}
Let  $\yvec = (y_1,y_2,\dotsc, y_d) \in \bR_+^d$, $n\in \bN$ and let  $ Z_{\fl{n\yvec}}$ be defined by \eqref{parf}. Then the limit 
\be
\lim_{n\to \infty} n^{-1} \log Z_{\fl{n\yvec}} = p(\yvec) \quad \bP-a.s.
\ee
 Furthermore, there exists an event $\Omega_0 \subseteq \Omega $ of full probability such that the a.s.\ convergence happens simultaneously for all  $\yvec \in \bR_+^d$. The limiting value viewed as a function of $\yvec$ satisfies the following properties:
\begin{enumerate}
\item[(a)] $p(\cdot)$ is concave and continuous on $\bR_+^d$.
\item[(b)]  $p(c\yvec) = c p(\yvec)$ for $c>0$.
\item[(c)] For all $\yvec $  such that $\|\yvec\|_1=1$, $p(\yvec) \le p(d^{-1}\mathbf{1})$.
\end{enumerate}
\label{substd}
\end{proposition}

\begin{proposition}
Let  $n\in \bN, t>0$ and let  $ Z^{\dtot}_{\fl{nt}}$ be defined by \eqref{totparf}. Then the limit 
\be
\lim_{n\to \infty} n^{-1} \log Z^{\dtot}_{\fl{nt}} = \rho^{\dtot}(t) \quad \bP-a.s.
\ee
 Furthermore, 
\be
\rho^{\dtot}(t) = \sup_{\yvec: \|\yvec\|_1=t}p(\yvec) = p(d^{-1}t \mathbf{1}) 
\ee
\label{???}
\end{proposition}

These propositions suffice to guarantee the following general existence theorems. Then, some quantitative properties of the rate functions follow. 

\begin{theorem}
Under assumption \ref{Cramerass}, the following large deviations principles hold. 
\begin{enumerate}
\item  For $t>0$, $\uvec \in \bR_+^d\setminus\{\mathbf0\}$  and $r \in \bR$ there exists a nonnegative function $\rf$ that satisfies 
\be
 \rf_{\uvec}(r) = -\lim_{n\rightarrow\infty} n^{-1}\log \bP\{ \log Z_{\fl{n\uvec}} \ge nr\}.
\label{psidef}
\ee 
$ \rf$ is convex in the variable $(\uvec,r)$ and equals 0 on the set  $r \le p(\uvec)$. The rate function is continuous in $(\uvec, r)$ on the set $ \inter\{ (\uvec, r) : \uvec \in \bR^d_+\setminus\{\mathbf0\}, r \in \bR,  \rf_{\uvec}(r) < \infty \}\subseteq \bR^d_+\times \bR$.

\item For  $t>0$ and $r \in \bR$, there exists a nonnegative function $\rf$ that satisfies 
\be
 \rf_{ t}(r) = -\lim_{n\rightarrow\infty} n^{-1}\log \bP\{ \log Z^{\dtot}_{\fl{nt}} \ge nr\}.
\label{psidef2}
\ee
$ \rf$ is convex in the variable $(t,r)$ and equals 0 on the set  $r \le \rho^{\dtot}(t) $. The rate function is continuous in $(t, r)$ on the set $\inter\{ (t, r) :  \rf_{t}(r) < \infty \}$.

\end{enumerate}
\label{phidef}
\end{theorem}

Next, we show some quantitative properties of the rate functions described in the following three propositions. 

The first one is about the behavior of  the rate function $ \rf_{\uvec}(r)$ at the lower dimensional boundaries of $\bR^d_{+}$. For a vector $\uvec=(u_1, \ldots , u_d) \in \bR^d_+$ we denote by $\uvec_{1,k} = (u_1, \ldots, u_k,0,\ldots,0)$ the projection onto the k-dimensional  facet of the first quadrant. 

It is possible that $\uvec = \mathbf0$ so we need a definition for the rate function at $\mathbf0$. If we just change $\uvec$ with $\mathbf0$ in \eqref{psidef} the rate function becomes trivial (takes values $0$ and $\infty$ only). However, $\mathbf0$, is the macroscopic endpoint of the partition function. The definition of $\rf_{\mathbf0}$ should cover for example the rate function of a single random variable. It is consistent (in the sense that $\lim_{\uvec\to \mathbf0}\rf_{\uvec}(r) =\rf_{\mathbf0}(r)$  described in the following proposition) to define it the following way:
\be
\rf_{\mathbf0}(r)=\begin{cases}
                               0, & r<0, \\
                               x_\infty r, & r\ge 0
                               \end{cases}
\label{j0def}
\ee
where $x_\infty \in (0,\infty]$  is the 
maximal slope of the one-sided Cram{\'e}r rate function on the right of the zero for sums of i.i.d.~ $\om$ weights and is given by $x_{\infty} = \lim_{r \to \infty} r^{-1}I(r)$.

\begin{proposition}[Continuity at the boundaries]
Let $\uvec \in \bR^d_+$ and fix $r \in \bR$. Assume that $\rf_{\|\uvec\|_1\mathbf e_1}(r) < \infty$ in a neighborhood of $\|\uvec\|_1\mathbf e_1$. (This is the one sided Cram{\'e}r rate function.) Then, for any $k \ge 1$  and sequence $\{\uvec^{(m)}\}_{m\in \bN}$ where $\uvec^{(m)} \le \uvec$, $\lim_{m\to \infty} \uvec^{(m)}= \uvec_{1,k}$, 
\be
\lim_{m\to \infty}\rf_{\uvec^{(m)}}(r)=\rf_{\uvec_{1,k}}(r). 
\ee
In the special case where $\rf_{\mathbf0}(r)< \infty$, the function $\rf_{(\cdot)}(r)$  is continuous everywhere on $\bR^d_+$. 
\label{bdc}
\end{proposition}

\begin{proposition}[Unique zero]  \quad \phantom{ghoststeps}
\begin{enumerate}
\item The rate function $\rf_1(r)$ given by \eqref{psidef2} is strictly positive for $r > \rho^{\dtot}(1)$.
\item The rate function $\rf_{\uvec}(r)$ given by \eqref{psidef} is strictly positive for $r > p(\uvec)$.
\end{enumerate}
\label{u0}
\end{proposition}

As stated in the introduction, the polymer model starts behaving like the last passage time model when $\beta \to \infty$. 
For a fixed $\uvec$, define the last passage time  as in \eqref{LPT}.
Assumption \ref{Cramerass} along with the subadditive ergodic theorem, give the existence of a finite limiting last passage time constant \[ \lim_{n\to \infty}n^{-1} T(\fl{n\uvec}) = \psi(\uvec),\]
with arguments identical to those used in the proof of Proposition \ref{substd}. In turn, one can show the existence of an upper tail large deviation rate function for the last passage time, following the steps of the proof of Theorem \ref{phidef}. The rate function is given by \[ I^{\infty}_{\uvec}(r) = - \lim_{n\to \infty} n^{-1}\log \bP\{  T(\fl{n\uvec}) \ge nr \}. \]    

The rate functions also start behaving similarly as described in the following proposition.

\begin{proposition}[Large $\beta$ behavior]
Let $\uvec \in \bR^d_+$ and assume $I^{\infty}_{\uvec}(r)$ is left continuous at $r\in \bR$. Then
\be
\lim_{\beta\to \infty}\rf^{\beta}_{\uvec}(\beta r) =  I^{\infty}_{\uvec} (r).
\ee 
\label{lvbehavior}
\end{proposition}

We also prove quenched large deviations for the exit point for the free endpoint models, and the path of a polymer chain for both models.

\begin{theorem}[Exit point LDP] Let $\uvec  = (u_1,u_2,\dotsc, u_d)\in \bR^d_{+}$ with $\|\uvec\|_1=1$ and let $\yvec = (u_1,\dotsc, u_{d-1}) \in \bR^{d-1}_+ $. For $n \in \bN$ let $[n\uvec]= (\fl{n\yvec }, n - \|\fl{n\yvec }\|_1)$ and denote by $x_{n}$ the last point of the polymer chain $x_{0,n}$.  Then 
\be
p(d^{-1}\mathbf{1}) - p(\uvec) = - \lim_{n\rightarrow \infty}n^{-1}\log Q^{\om}_{n}\{x_{n} = [n\uvec ]\}.
\ee
\label{qend}
\end{theorem}
This readily leads to the following path large deviations.

\begin{theorem}[Path LDP]
Let $\gamma: [0,1]\to \bR_+^d$ be a coordinatewise nondecreasing Lipschitz curve with $\gamma(0)=0$. For   $\e >0$ let  $\mathcal{N}_{\e}(\gamma)$ denote a uniform  $\e$-neighborhood of $\gamma$. The following large deviation principles hold:
\begin{enumerate}
\item (Constrained endpoint.) Let $\uvec$ in $\bR_+^d$ and let $\gamma(1)=\uvec$. Then,
\be
 p({\uvec})  - \int_0^1 p({\gamma'(t)}) \,dt = -\lim_{\e\to 0} \lim_{n\rightarrow \infty}n^{-1}\log Q^{\om}_{\fl{n\uvec}}
\big\{x_{\centerdot} \in n\mathcal{N}_{\e }(\gamma)\big\}
\ee
\item (Free endpoint.) Let $\|\gamma(1)\|_1=1$ . Then,
\be
 p(d^{-1}{\mathbf{1}}) -\int_0^1  p(\gamma'(t)) \,dt = -\lim_{\e\to 0} \lim_{n\rightarrow \infty}n^{-1}\log Q_n^{\om}
\big\{x_{0,n} \in n\mathcal{N}_{\e }(\gamma)\big\}
\ee
\end{enumerate}
\label{qpath}
\end{theorem}

\begin{remark}
The theorem is true for any curve $\gamma$ that admits a Lipschitz parametrization. It is easy to check that the rate functions in \eqref{qpath} are independent of any $C^1$ parametrization $\phi: [0,1] \to [0,1]$. It follows from the $1$- homogeneity of $p$ and a change of variables in the integral.
\end{remark}

\section{Preliminaries}
In this section we record results that are used throughout. In particular, basic properties of the partition functions; their laws of large numbers and the proof of Propositions \ref{substd}, \ref{???}. For the continuity of the partition functions at the boundary and for the unique zeros of the rate functions, we need a concentration inequality which we state first. At the end of the section we prove an auxiliary lemma about upper-tail large deviations of a sum of independent random variables and conclude the section with a basic fact that we use at various instances without alerting the reader.

\begin{lemma}[Proposition 3.2.1 \cite{Comets-Gregorio-arc}]
Under assumption \ref{Cramerass}, there exist constants $c_1, c_2 \in (0, \infty)$ such that for all $\e >0$ sufficiently small
\be
\bP\{ |\log Z_{n} - \bE \log Z_n |>n\e \}\le 2 \exp\{ -c_1\e^2 n\}
\ee and
\be
\bP\{ |\log Z_{\fl{n\uvec}} - \bE \log Z_{\fl{n\uvec}} |>n\e \}\le 2 \exp\{ -c_2(\uvec)\e^2 n\}
\ee
\label{321} 
\end{lemma}
\begin{proof}
The proof is identical with the one in \cite{Comets-Gregorio-arc}, p.26, tailored to these particular rectangle models.
\end{proof}

For $\yvec \in \bZ_+^{d}$, define the coordinate shift operator $T_{\mathbf{y}}$ so that 
\be
Z_{\uvec,\vvec}\circ T_{\yvec} = Z_{\uvec+\yvec, \vvec+\yvec}
\label{logZsub}
\ee
and observe that the random variables  $ \log Z_{\uvec, \vvec}$ are superadditive, satisfying 
\begin{align}
 \log Z_{\uvec+\vvec} &\ge \log Z_{\uvec}  +  \log Z_{\uvec,\uvec+\vvec} \label{G-sub} \\
                                        &= \log Z_{\uvec}  +  \log Z_{\vvec}\circ T_{\uvec}.\notag
\end{align}

\begin{proof}[Proof of Proposition \ref{substd}]
Assume without loss of generality that $\bE(\om(\mathbf0)) >0$. This builds the monotonicity of the limiting free energy that will simplify the proof and makes the limiting free energy a positive function. The proof for concavity and homogeneity when $\yvec \in \bN^d$ and $c \in \bN$ follows from the subbadditive ergodic theorem. For the monotonicity of the limit, let $\uvec \le \vvec \in \bZ_+^2$ and for given $n$ fix a path $\pi_{n}$ from $n\uvec$  to $n\vvec$. Superadditivity gives
\be
\log Z_{n\vvec} \ge \log Z_{n\uvec} + \beta \sum_{x_j \in \pi_{n}}\om(x_j).
\label{pismono}
\ee 
Taking a limit along a suitable subsequence after dividing by $n$ gives that $p(\vvec) \ge p(\uvec) + \|\vvec-\uvec\|_1\bE(\om(\mathbf0))$, hence $p$ is monotone on integer vectors. 

For rational $\yvec \in \bQ^d$, find a positive integer $k$ so that $k\yvec \in \bN^d$ and define
\be
p(\yvec) = k^{-1}p(k\yvec).
\label{ratho}
\ee  
Observe that homogeneity for integer $ k $ gives that the value in \eqref{ratho} is independent of the choice of $k$. Homogeneity and concavity extend now to rational $c >0$ and rational $\yvec$ as follows.  

Let $n\in \bN$ and write it as $n = Mk +r$ with $r \in \{ 0,1,\dotsc, k-1 \}$. Then 
\be
Mk\yvec \le \fl{Mk\yvec + r\yvec} = \fl{n\yvec} \le (M+1)k\yvec.
\ee
Hence, by the superadditivity 
\be
 Z_{(M+1)k\yvec}\Big(Z_{\fl{n\yvec}, (M+1)k\yvec}\Big)^{-1}\ge Z_{\fl{n\yvec}}\ge Z_{Mk\yvec}\,Z_{Mk\yvec, \fl{n\yvec}}.
\label{sss}
\ee

We show the upper bound. The lower one follows in the same manner. Observe that $\| (M+1)k\yvec - n\yvec \|_1 \le k\| \yvec\|_1$, so as $n\to \infty$,  $n^{-1}\log Z_{\fl{n\yvec}, (M+1)k\yvec} \to 0$ almost surely. This, along with \eqref{sss}, gives
\be
p(\yvec)=\lim_{n\rightarrow \infty} \frac{ n-r+k }{n(n-r+k)} \log Z_{(n-r+k)\yvec} \ge  \varlimsup_{n\rightarrow \infty} \frac{1}{n} \log Z_{\fl{n\yvec}}.
\ee
Monotonicity follows as in the case with integer coordinates.
 
To extend to all vectors $\yvec$ in $\bR^d_+$, define
\be
p(\yvec) = \sup\big\{  p(\xvec_m) : \yvec \ge \xvec_m \in \bQ^d  \big\}
\label{ext}
\ee 
With this definition, homogeneity follows immediately for rational $c$. For any $c>0$ pick $c_1$,$ c_2$ $\in \bQ$, so that $c_1 <c < c_2$. Then
\be
c_1p(\yvec)=p(c_1\yvec)\le p(c\yvec) \le  p(c_2\yvec) = c_2p(\yvec).  
\ee
Then consider sequences $c_1^{m} \nearrow c$ and $c_2^{m} \searrow c$ to get homogeneity for all $c\in \bR_+$.

Superadditivity follows directly from \eqref{ext} and concavity follows from homogeneity and superadditivity. This in turn implies continuity of $p(\cdot)$ in the interior of $\bR^d_+$. 

Finally, for the most general form of the limit for $\yvec \in \bR^d_+$, pick rational vectors $\uvec_1 < \yvec < \uvec_2$ so that for a given $\e > 0$, $\| \uvec_i - \yvec \|_1 < \e$. By superadditivity, 
\be
\log Z_{\fl{n\uvec_1}} + \log Z_{\fl{n\uvec_1}, \fl{n\yvec}} \le \log Z_{\fl{n\yvec}} \le \log Z_{\fl{n\uvec_2}} - \log Z_{\fl{n\yvec}, \fl{n\uvec_2}}.
\label{bineq0}
\ee

We first bound $\log Z_{\fl{n\yvec}, \fl{n\uvec_2}}$ from below using a path $\pi_n$  with segments on the boundary of the rectangle, parallel to the axes (following the ordering of the axes). As $n$ becomes large, the weights on $\pi_n$ will satisfy a strong law of large numbers,
\begin{align}
\varliminf_{n\to \infty}n^{-1}\log Z_{\fl{n\yvec}, \fl{n\uvec_2}} \ge \lim_{n\to \infty} n^{-1}\sum_{j= 1}^{\fl{n\|\uvec_2 - \yvec\|_1}}\beta \om(\xvec_j) \ge 0.
\label{lll0}
\end{align}
Symmetric bounds hold for $Z_{\fl{n\uvec_1}, \fl{n\yvec}}$. These, along with \eqref{bineq0} give
\begin{align}
p(\uvec_1) \le \varliminf_{n\rightarrow \infty} n^{-1}\log Z_{\fl{n\yvec}} 
                                \le \varlimsup_{n\to \infty} n^{-1}\log Z_{\fl{n\yvec}} \le p(\uvec_2). 
\label{llll0}
\end{align}
Let $\uvec_1$ and $\uvec_2$ to converge appropriately on rational vectors to $\yvec$, use continuity of $p(\cdot)$ to finish the proof and verify the limit for all points.

\medskip
\textit{Continuity at the boundary.}  Let ${\bf e}_{k,d} = (0,0, \cdots, 0, \e_{k+1}, \cdots, \e_d) \in \bR^d_{+}$,  and let $\yvec = (y_1,\cdots, y_{k}, 0,\cdots, 0)$ with $y_i \neq 0$ if $i\le k$. Define $\yvec_{{\bf e}} = \yvec+ {\bf e}_{k,d} \in \text{int}({ \bR^d_{+}})$. Then,

\begin{align}
p(\yvec_{{\bf e}}) & = \lim_{n\to \infty} n^{-1}\log Z_{\fl{n\yvec_{{\bf e}}}}\notag \\
                               & \ge \lim_{n \to \infty } n^{-1}\log Z_{\fl{n\yvec}} +  \lim_{n \to \infty }\sum_{i=k+1}^{d} n^{-1} \sum_{j=1}^{\fl{n\e_i}}\beta \om(\uvec_j)\notag \\
                               & = p(\yvec) + \bE(\om(\mathbf 0))\beta \sum_{i=k+1}^{d}\e_i \quad \bP -\text{ a.s.}\label{boundcontup} \end{align}
                               
For a reverse inequality, given  $\uvec\in\bR_+^d$ define the
following set of paths
on $\bZ^{d-1}$: 
\[
\Lambda_{\fl{n\uvec}} = \Big\{ \pi = \{ \vvec^{i} \}_{ i=0}^{\fl{nu_d}+1}\in (\bZ^{d-1})^{\fl{nu_d}+2}: \mathbf0= \vvec^{0} \le \vvec^{1} \le \cdots\le\vvec^{\fl{nu_d}+1}=
 \fl{n\uvec_{d-1}}\Big\}.
\]
Decompose  the partition function according to the lattice points where it enters
a new level in the $\mathbf e_d$ direction   (including an irrelevant weight
$\om(0)=0$ at the origin): 
\be
Z_{\fl{n\uvec}} = \sum_{\pi \in \Lambda_{\fl{n\uvec}} } \prod_{i=0}^{\fl{nu_d}} 
Z^{\square}_{(\vvec^{i}, i ), (\vvec^{i+1}, i )}.   \label{partdeco}
\ee
 For $\pi \in \Lambda_{\fl{nu}}$ let $Z_\pi$ denote a summand in \eqref{partdeco}.   
\be\begin{aligned}
\log Z_{\pi} &= \sum_{i=0}^{\fl{nu_d}} \log Z^{\square}_{(\vvec^{i}, i ), (\vvec^{i+1}, i )}  \\
                   &= \sum_{i=0}^{\fl{nu_d}} \log Z_{(\vvec^{i}, i ), (\vvec^{i+1}, i )} + \beta\sum_{i=1}^{\fl{nu_d}}\om(\vvec^{i}, i )  \\
                   &\le \log \widetilde{Z}^{\pi}_{\fl{n\uvec_{1,d-1}}} + \beta\sum_{i=1}^{\fl{nu_d}}\om(\vvec^{i}, i ) \\
                   &\le \log \widetilde{Z}^{\pi}_{\fl{n\uvec_{1,d-1}} + \fl{nu_d\mathbf{e}_{d-1}}} . 
\end{aligned}\label{coupling}\ee
   $ \widetilde{Z}^{\pi}_{\fl{n\uvec_{d-1}} + \fl{nu_d\mathbf{e}_{d-1}}}$ above 
    is a partition function in $\bZ^{d-1}$ with weights coupled with the original 
weights  
so that we have the identities  $Z_{(\vvec^{i}, i ), (\vvec^{i+1}, i )}=
\widetilde{Z}^{\pi}_{ \vvec^{i}, \vvec^{i+1}}$ 
and 
$ \beta\sum_{i=1}^{\fl{nu_d}}\om(\vvec^{i}, i) = \log \widetilde{Z}^{\pi}_{ \fl{n\uvec_{1,d-1}}, \fl{n\uvec_{1,d-1}} +\fl{nu_d}\mathbf{e}_{d-1}}$. 

Let $M = |\Lambda_{\fl{n\uvec}}|$.  Counting the number of ways in which 
 the length from $0$ to $\fl{nu_i}$ can be decomposed into $\fl{nu_d}+1$
 segments gives 
\be 
M = \prod_{1\le i \le d-1}{ {\fl{nu_i} +\fl{nu_d}} \choose {\fl{nu_d}+1}}\label{M}.
\ee

By Stirling's formula,
\begin{align}
M &= \exp\Bigg\{n \sum_{i=1}^{d-1} \Big( (u_i + u_d)\log(u_i + u_d) - u_i \log u_i - u_d\log u_d  \Big) + o(n) \Bigg\} \notag \\
    &= \exp\big\{ n L_{\uvec_{1,d-1}}(u_d) +o(n) \big\} \label{pathcard}.
\end{align}

Let $\e > 0$ and fixed. Assume that $u_d = \e_d >  0$, positive but sufficiently small (depending on the value of $\e$).   
If $ \log Z_{\fl{n\uvec}} \ge np(\uvec_{1,d-1}) + n\e$ 
there must exist a summand in \eqref{partdeco} with total weight no less than  $M^{-1}e^{n(p(\uvec_{1,d-1})+ \e)}$. 
Using this, \eqref{coupling} and the concentration inequality in Lemma \ref{321} [Proposition 3.2.1 (b) in \cite{Comets-Gregorio-arc}] 
we compute
\begin{align}
\bP\big\{\log Z_{\fl{n\uvec}}&\ge np(\uvec_{1,d-1}) + n\e\big\}\notag\\
&\le M \bP\big\{ \log \widetilde{Z}^{\pi}_{\fl{n\uvec_{1,d-1}} + \fl{nu_d\mathbf{e}_{d-1}}}\ge np(\uvec_{1,d-1}) + n\e -\log M\big\} \notag \\
&\le M \bP\big\{ \log \widetilde{Z}^{\pi}_{\fl{n\uvec_{1,d-1}} + \fl{nu_d\mathbf{e}_{d-1}}}\ge np(\uvec_{1,d-1}+u_d\mathbf{e}_{d-1}) + n\e/2 -\log M\big\} \notag \\
&\le 2\exp\{ - n(c_1\e^2 -  2L_{\uvec_{1,d-1}}(\e_d)) +o(n) \}.\label{B-Csetup}
\end{align} 

For $ \e_d $ sufficiently small, $L_{\uvec_{1,d-1}}(\e_d)$ is smaller than $\e^2$. A Borel-Cantelli argument gives that    
\be
\lim_{n\to \infty}n^{-1} \log Z_{\fl{n\uvec}} \le p(\uvec_{1,d-1}) + \e, \quad \bP-a.s. \label{Continuityupbound0}
\ee

To finish the proof, consider $\yvec_{\mathbf{e}}$ as before. Iterating the arguments from above, starting from \eqref{partdeco}, we conclude  that 
\begin{align}
\bP\big\{\log Z_{\fl{n\yvec_{\mathbf{e}}}} &\ge np(\yvec) + n\e\} \notag\\
&\le 2^{d-k}\exp\Bigg\{ - n\Big(c_1\e^2 -  \sum_{j=k+1}^{d}L_{\yvec_{1, j-1}}(\e_{j}) + f(\e_d, \cdots, \e_{k+1})\Big)+o(n) \Bigg\}
\notag
\end{align}
where $f(\e_d, \cdots, \e_{k+1}) \to 0$ uniformly as the $\e_i \to 0$. By Borel-Cantelli we have 
\be
\lim_{n\to \infty}n^{-1} \log Z_{\fl{n\yvec_{\mathbf{e}}}} \le p(\yvec) + \e, \quad \bP-a.s. \label{Continuityupbound1}.
\ee

Equations \eqref{boundcontup} and \eqref{Continuityupbound1} give the continuity at the boundary by letting $\e$ tend to $0$.


 
\medskip

\textit{Convergence in all directions.} We now show that convergence happens $\bP$- a.s.\ simultaneously for all endpoints $\yvec \in \bR_+^d$. Note that there is a full probability event of $\Omega$ where the convergence is true for all 
$\yvec \in \bQ^d$. 
By (2),  it suffices to show that the conclusion is true for $\yvec$
 with $\|\yvec\|_1 =1$.

Let $ \delta,\e \in \bQ_+$  and assume $0<\delta< \e$. We define two partitions:  
 \[\pi^+_M(\e, \delta) = \{\vvec_1, \vvec_2,\dotsc, \vvec_M \}\] is a partition of the hyperplane $\|\yvec\|_1 = 1+\e, \,\, \yvec \in \bR^d_+$, so that  $\min_{i\neq j}\|\vvec_i-\vvec_j\|_1 \le \delta$ with $\vvec \in \bQ^d$. 
Also we project $\pi^+$ to get
\[ \pi^{-}_M(\e, \delta) = \{\vvec_1- 2\e\mathbf e_d, \vvec_2-2\e \mathbf e_d,\dotsc, \vvec_M-2\e\mathbf e_d\}, \]
a partition of the hyperplane $\|\yvec\|_1 = 1-\e$. Any point that falls outside the first quadrant we remove from $\pi^{-}_M(\e, \delta).$
Observe that for any $\yvec$ with $\|\yvec\|_1 =1$, 
there exist partition points $\vvec_j \in \pi^+_M(\e, \delta)$, $\uvec_i = \vvec_i- 2\e e_d \in \pi^{-}_M(\e, \delta)$
 with $\|\vvec_j - \yvec\|_1+ \|\uvec_i - \yvec\|_1 \le 4\e$, with the vector ordering 
$
\uvec_i < \yvec < \vvec_j.
$
For any $n\in \bN$ and choice of vectors $\uvec_j, \vvec_i$ fix a path $\pi^n_{\uvec_i, \vvec_j}$ from $\fl{n\uvec_i}$ to $\fl{n\vvec_j}$. Restrict the space of environments further by assuming the following law of large numbers:
\be
\lim_{n\to \infty}n^{-1}\sum_{x_k \in \pi^n_{\uvec_i, \vvec_j}} |\om(x_k)| = \|\vvec_j - \uvec_i\|\bE|\om(\mathbf0)|
\label{pathlln}
\ee

Then, following the calculations as in \eqref{bineq0},
\be
\log Z_{\fl{n\uvec_j}} + \log Z_{\fl{n\uvec_j}, \fl{n\yvec}} \le \log Z_{\fl{n\yvec}} \le \log Z_{\fl{n\vvec_i}} - \log Z_{\fl{n\yvec}, \fl{n\vvec_i}}
\label{bineq}
\ee
gives
\begin{align}
p(\uvec_i) -C\e &\le \varliminf_{n\to \infty} n^{-1}\log Z_{\fl{n\yvec}} 
                                                                 \le \varlimsup_{n\to \infty} n^{-1}\log Z_{\fl{n\yvec}} \le p(\vvec_j) + C\e.
\label{llll}
\end{align}
Let $\e\to 0$ along rationals and $\uvec_i$ and $\vvec_j$ to converge appropriately on rational vectors to $\yvec$. Continuity of $p(\cdot)$ on rationals gives the conclusion.

To prove (c) observe that for any vector $\yvec=(y_1, y_2,\dotsc, y_d)$ and a permutation on $d$ elements $\sigma \in  Sym(d)$, the following equalities hold:
 \be p(\yvec) = p(y_1,y_2,\dotsc, y_d) = p(y_{\sigma(1)},y_{\sigma(2)},\dotsc, y_{\sigma (d)})=p(\sigma(\yvec))\notag
\ee 
Restricted on the hyperplane $\|\yvec\|_1=1$, $p(\yvec)$ is concave, symmetric about $d^{-1}\mathbf{1}$. This implies (c).
\end{proof}

\begin{proof}[Proof of Proposition \ref{???}.] Without loss of generality, let $t=1$. 
Using the definition \eqref{totparf} we immediately get the lower bound
\be
p(d^{-1}\mathbf{1}) \le \rho^{\dtot}(1).
\ee
For the upper bound, let $\e>0$. Use the same partition $\pi^+_M(\e)$ as in the proof of the previous proposition and the last part of \eqref{bineq}, to estimate
\begin{align}
n^{-1}\log Z^{\dtot}_n&=n^{-1} \log \Big(\sum_{\yvec: \| \yvec \|_1=1} \log Z_{\fl{n\yvec}} \Big) \notag\\
                                        &\le n^{-1} \log \Big(n^d\max_{\yvec: \| \yvec \|_1=1} \log Z_{\fl{n\yvec}} \Big) \notag\\
                                        &\le n^{-1}d\log n +  n^{-1}\max_{1\le i \le M}\log Z_{\fl{n\vvec_i}} + n^{-1} \max_{\yvec \in Q_i}\sum_{j= 1}^{\fl{n\|\vvec_i - \yvec\|_1}}\beta|\om(u_j)|, \label{aarrgg}
\end{align}
where $Q_{i}=\{ \yvec: \|\yvec - \vvec_i\|_1 \le \e\}$. 
Take $n\rightarrow \infty$ in \eqref{aarrgg} to conclude that 
\be
\rho^{\dtot}(1) \le \sup_{\|\uvec\|_1=1+\e} p(\uvec) + C\e \le  p(d^{-1}1+\e, \cdots, 1+\e) + C\e,
\ee
where $C$ is the limiting last passage percolation constant if the path takes $2n\e$ steps. 
Let $\e$ tend to $0$ to get the conclusion.
\end{proof}

\begin{corollary}
There exists a full $\bP$-probability event so that 
\be
n^{-1}\log Z_{\fl{n\uvec}, \fl{n\vvec}} = p(\vvec - \uvec)
\ee 
simultaneously for all $(\uvec, \vvec) \in \bR^d_+\times \bR^d_+$ with $\uvec \le \vvec$
\label{lasterror}
\end{corollary}

\begin{proof}
From the previous proposition, the result is true if $\uvec= \mathbf0$. Then we can restrict further to all rational vectors $\uvec \in \bQ^d_+$ and use simialr approximations as before to get the corollary. 
\end{proof}

The remaining part of this section is about large deviations results.  


\begin{lemma}
Suppose that for each $n$, $L_n$ and $Z_n$ are independent random variables. Assume that the limits 
\begin{align}
{\lambda}(s) &= -\lim_{n\rightarrow \infty}n^{-1}\log \bP\{ L_n \ge ns \}, \label{cradev}\\
\phi(s) &= -\lim_{n\rightarrow \infty}n^{-1}\log \bP\{ Z_n \ge ns \}
\label{rightdev}
\end{align}
exist and are finite   for all $s\in \bR$. Assume   that $\lambda(a_\lambda)=\phi(a_\phi)=0$ for some 
  $a_\lambda, a_{\phi}\in \bR$.   Assume also that $\lambda$ is continuous.  
 Then for $r\in\bR$
\begin{align}
\lim_{n\rightarrow \infty} n^{-1}\log \bP\{ L_n + Z_n \ge nr \} =\begin{cases}
-\displaystyle \inf _{a_{\lambda}\le s \le r-a_{\phi}}\{ \phi(r-s)+  {\lambda}(s) \}, \,& r > a_{\phi}+a_{\lambda} \\
0, \,& r \le a_{\phi}+a_{\lambda}.
\label{sumsind0}
\end{cases}
\end{align}

\label{sumsind}
\end{lemma} 

\begin{proof}  
The lower bound $\ge$  follows from 
\be
\bP\{ L_n + Z_n \ge nr \} \ge  \bP\{ L_n \ge ns \} \bP\{ Z_n \ge n(r-s) \}. 
\nn\ee
Since an upper bound $0$ is obvious, it remains to show the upper bound
for the case $r > a_{\phi}+a_{\lambda}$.  
Take a finite  partition  $ a_{\lambda}= q_0 < \dotsm < q_m =  r -a_{\phi} $.
Then use a union bound and independence:  
\begin{align}
&\bP\{ L_n + Z_n \ge nr \}  \notag \\[3pt]
                &\le  \bP\{ L_n + Z_n \ge nr , \, L_n < n q_0 \}   \notag \\ 
                 &\quad \quad \quad \quad+\sum_{i=0}^{m-1}\bP\{ L_n + Z_n \ge nr, \,nq_i \le L_n \le nq_{i+1}\}  +\bP\{  L_n \ge nq_m\}\notag\\
&\le  \bP\{ Z_n  \ge  n (r-q_0) \} + \sum_{i=0}^{m-1}\bP\{ Z_n \ge n(r -q_{i+1})\}
\bP\{   L_n \ge nq_{i}\}+\bP\{ L_n \ge nq_m\}.\label{dec}\nn
\end{align}
 From this 
\be\begin{aligned}
&\varlimsup_{ n \rightarrow \infty } n^{-1} \log \bP\{  L_n + Z_n \ge nr \}\\
&\qquad \le - \min\Big\{ \phi(r-q_0), \, \min_{0\le i \le m-1}[\phi(r-q_{i+1})+ {\lambda}(q_{i})],\, \lambda(q_m)\Big\}. 
\end{aligned}\label{oops}\nn
\ee
Note that $\lambda(q_0)=\phi(r-q_m)=0$, refine the partition  and use the 
  continuity of $\lambda$.   \end{proof}

\section{Existence of the rate functions}
\label{Ratefuncex}
First a general lemma about existence of limits of almost superadditive sequences.
\begin{lemma}
Let $\{a_n\}_{n\in \bN}$ a sequence such that $a_{n+m}\ge a_{n} + a_{m}+c_{n,m}$ where $|c_{n,m}| < B$. Then the limit 
$\lim_{n\to \infty} n^{-1} a_n$ exists (and is potentially infinite). 
\end{lemma}

\begin{proof}
Let $\gamma = \varlimsup_{n\to \infty}n^{-1}a_n$ and assume first that $\gamma< \infty$. Specify an $\e>0$ and let $N_0 \in \bN$ such that $N_0^{-1}a_{N_0} > \gamma -\e$ and $N_0^{-1} B<\e$. We can write any $n\in \bN$ as $n = k N_0 + r$. Then
\begin{align}
n^{-1}a_n &\ge n^{-1}a_{kN_0} + n^{-1}a_r - n^{-1}B\notag\\
                  &\ge (n-r)n^{-1}( N_0^{-1}a_{N_0} -  N_0^{-1}B)+ n^{-1}a_r - n^{-1}B\notag\\
                  &\ge (n-r)n^{-1}( \gamma -  2\e)+ n^{-1}a_r - n^{-1}B.\notag
\end{align}
Take $\varliminf_{n\to \infty}$ on both sides and let $\e\to 0$.
If $\gamma = \infty$, pick any large constant $C$ and let $N_0$ large so that  $N_0^{-1}a_{N_0} > C$. Identical steps show that the limit is infinity.
\end{proof}
\subsection{The fixed endpoint case }
 
\begin{proof}[Proof of Theorem \ref{phidef}- Existence] 
For $m,n\in\bR_+$ let $\xvec_{m,n} \in \{0,1\}^{d}$ so that
$\fl{(m+n)\uvec}= \fl{m\uvec}+\fl{n\uvec} + \xvec_{m,n}.$
By superadditivity, independence and shift invariance
\be\begin{aligned}
\bP\{ \log &Z_{\fl{(m+n)\uvec}} \ge {(m+n)r}\}   \\
                                      & \ge  \bP\{ \log Z_{\fl{m\uvec}}\ge mr\} \bP\{ \log Z_{\fl{n\uvec}} \ge nr\}
    \bP\{ \log Z_{\xvec_{m,n}} \ge 0\}.   
\end{aligned}\label{temp18}\ee
By assumption  \eqref{ndeg}  there is a uniform lower bound
$\bP\{ \log Z_{\xvec_{m,n}} \ge 0\}\ge \rho>0$.  
  Thus  $t(n)= \log \bP\{  \log Z_{\fl{n\uvec}}\ge nr \} $ 
is  superadditive with a small uniformly bounded correction.  
Similar reasoning shows that either $t(n)=-\infty$ for all $n$ or then
$t(n)>-\infty$ for all $n\ge n_0$.  
Consequently by superadditivity  the rate function
\begin{align}
 \rf_{\uvec}(r) &= -\lim_{n\rightarrow \infty} n^{-1}\log \bP\{  \log Z_{\fl{n\uvec}} \ge nr\} \label{Fekete}
               \end{align}
exists for $\uvec = (u_1,\dotsc,u_d) \in \bR_+^d$  and $r\in \bR$. The limit in \eqref{Fekete} holds also as $n\rightarrow \infty$ through   real values, not just integers.

Similarly we get  convexity of $ \rf$ 
in $(\uvec, r)$.  Let $\lambda \in (0,1)$ and assume $(\uvec, r) =\lambda(\uvec_1,  r_1)+(1-\lambda)(\uvec_2,  r_2).$ Then 
\begin{align*}
n^{-1}\log\bP\{ \log Z_{\fl{n\uvec}} &\ge nr \}    
                                     \ge  \lambda (\lambda n)^{-1} \log \bP\{  \log Z_{\fl{n\lambda \uvec_1}} \ge n\lambda r _1\} \\[3pt]
                                     &+ (1-\lambda) ((1-\lambda) n)^{-1} 
                                        \log \bP\{  \log Z_{\fl{n(1-\lambda) \uvec_2}}  \ge n(1-\lambda) r _2\}+o(1) \notag 
\end{align*} 
and letting  $n \rightarrow \infty$ gives 
\be
 \rf_{\uvec}(r) \le \lambda  \rf_{\uvec_1}(r_1) + (1- \lambda)  \rf_{\uvec_2}(r_2).
\ee 

Similar arguments give existence and convexity of the rate function
\be
\rf_t(r) = -\lim_{n\rightarrow \infty}\log \bP\{ \log Z^{\dtot}_{\fl{nt}}\ge r \}.
\ee
\end{proof}
\section{Behavior of the rate functions}

We first need to show that the rate function, whose existence was established in the previous section, is not trivial. We establish nontrivial bounds and the continuity of the rate function on the boundary.

\begin{proof}[Proof of Proposition \ref{bdc}.]
 Let 
$\uvec=(u_1,u_2,\dotsc, u_d)$ and let $\uvec_{1,k}$ $= (u_1,u_2,\dotsc, u_k,0,\ldots,0) $
where $k\in\{ 1,2,\dotsc, d-1\}$, $\uvec$ $\in \bR^d_+$. We start with an upper bound for $\rf$. Fix $r \in \bR$. Superadditivity of the partition functions gives 
\begin{align}
 \rf_{\uvec}(r) &\le -\lim_{n\rightarrow\infty}n^{-1}\log \bP\Bigg\{  \log Z_{\fl{n\uvec_{1,k}}} + \beta\sum_{i=k+1}^{d}\sum_{j=1}^{\fl{nu_i}}\om(x_{i,j}) \ge nr \Bigg\} \notag \\
                     &\le \rf_{\uvec_{1,k}}(r) -\sum_{i=k+1}^d \lim_{n\rightarrow \infty}n^{-1} \log \bP\Bigg\{\fl{nu_i}^{-1} \sum_{j=1}^{\fl{nu_i}}\om(x_{i,j}) \ge 0 \Bigg\} \label{ustep1} \\ 
                     &=\rf_{\uvec_{1,k}}(r) + \sum_{i=k+1}^d u_i \inf_{x>0}I(x) \label{ustep11},
\end{align}
since the sum inside the braces of \eqref{ustep1} is a sum of i.i.d.\ $\om(x_j)$ r.v.\ for which the Cram{\'e}r rate function exists by Assumption \ref{Cramerass}. 
This bound holds as long as $u_1 \neq 0$.

For the lower bound, we use the same decomposition as in the continuity of the limiting free energy. Recall \eqref{pathcard} and define  for $2\le i \le d$ 
\be
F_{i-1}(\uvec) =  \sum_{j=1}^{i-1} \Big( (u_j + u_i)\log(u_j + u_i) - u_j \log u_j - u_i\log u_i\Big).
\label{patheses}
\ee

With this notation, $M$ in \eqref{pathcard} is given, using a Stirling approximation by $M= \exp\{nF_{d-1}(\uvec) + o(n)\}$ $\le \exp\{nF_{d-1}(\uvec) + n\e_{d-1}\} $. The error $\epsilon_{d-1}$ can be as small as possible for $n$ sufficiently large. The functions $F_{i-1}$ are jointly continuous in $u_1, \dots, u_{i-1}$ and as long as $u_{i-1}$ (and possibly more of the $u_j$'s)  tend to $0$, so is the function. 
 
We use this to bound $\rf$ from below with a standard union bound:
\begin{align}
& \rf_{\uvec}(r) \ge- \lim_{n\rightarrow \infty}n^{-1}
\log \sum_{\pi \in \Lambda_{\fl{n\uvec}}} \bP\bigl\{ \log Z_{\pi} \ge nr - \log M  \bigr\}\notag \\
&\stackrel{\eqref{coupling}}{\ge}- \lim_{n\rightarrow \infty}n^{-1}\log \sum_{\pi \in \Lambda_{\fl{n\uvec}} }
 \bP\bigg\{ \log \widetilde{Z}^{\pi}_{\fl{n\uvec_{1,d-1}} + \fl{nu_d\mathbf{e}_{d-1}}} \ge nr - \log M  \bigg\}\notag \\
&\ge- \lim_{n\rightarrow \infty}\biggl( \frac{\log M}n + n^{-1}\log
 \bP\bigg\{ \log  \widetilde{Z}^{\pi}_{\fl{n\uvec_{1,d-1}} + \fl{nu_d\mathbf{e}_{d-1}}} \ge nr - nF_{d-1}(\uvec) -n\epsilon_{d-1}  \bigg\} \biggr)\notag \\
 &=\rf_{\uvec_{1,d-1}+ u_d\mathbf{e}_{d-1}}(r-F_{d-1}(\uvec)-\epsilon_{d-1}) - F_{d-1}(\uvec). \notag
\end{align}
In the last step above a little correction as in \eqref{temp18} replaces 
$\fl{n\uvec_{1,d-1}} + \fl{nu_d\mathbf{e}_{d-1}}$ with 
$\fl{n\uvec_{1,d-1} + nu_d\mathbf{e}_{d-1}}$. 

Set \be
\widetilde{\uvec}_{1, i-1}= \uvec_{1, i-1}+\sum_{k=i}^{ d} u_k\mathbf{e}_{i-1}.
\ee
Proceeding inductively, we get the lower bound
\be
 \rf_{\uvec}(r) \ge \rf_{\widetilde{\uvec}_{1,k}}\Big(r-\sum_{k+1\le i \le d}\big(F_{i-1}(\widetilde{\uvec}_{1,i})-\epsilon_{i-1}\big)\Big) -\sum_{k+1\le i \le d}F_{i-1}(\widetilde{\uvec}_{1,i}), \label{Psi lower bound}
\ee
where the $\epsilon_i$'s are quantities that come from the errors from repeated use of Stirling's formula and go uniformly to $0$. 
Joint continuity of the function $F(\uvec)$ translates to joint continuity of the lower bound in \eqref{Psi lower bound}.

Equations \eqref{ustep11} and \eqref{Psi lower bound} suffice to give continuity of the rate functions on the boundary of $\bR^d_+$. Let $1\le k \le d$ and assume without loss of generality that the coordinates that converge to $0$ are the last $d-k$, so that \[\uvec^{(m)}\stackrel{m\to \infty}{\longrightarrow} (u_1, u_2,\hdots, u_k, 0,0,\hdots,0 ) \in \bR^d_+,\] with $u_i > 0$. Then both bounds  \eqref{ustep11} and \eqref{Psi lower bound} come together and give
\be
\lim_{m\to \infty}\rf_{\uvec^{(m)}}(r) = \rf_{(u_1,\hdots, u_k,0,\ldots, 0)}(r).
\ee
When \eqref{Psi lower bound} is used, it is under the assumption that $u_1, \ldots, u_k >0$. Then, the rate function $\rf_{\widetilde{\uvec}_{1,k}}(r)$ is continuous in $(\widetilde{\uvec}_{1,k}, r)$ by Theorem \ref{phidef} applied when we are restricted on the $k$-dimensional facet of $\bR^d_+$ and the finiteness assumption of the rate function. 
 
Assume now that $\rf_{\mathbf0}(r) < \infty$ and $\uvec^{(m)}\to \mathbf0 $, $\uvec^{(m)} \in \inter\bR^d_+$.  For an upper bound at the origin, one can repeat the steps \eqref{ustep1},\eqref{ustep11}. The partition function now is just a sum of i.i.d.~ $\om$ variables. Then,
\be
\rf_{\uvec^{(m)}}(r)\le u^{(m)}_1\rf_{\mathbf e_1}(r/u^{(m)}_1) + \sum_{i=2}^d u^{(m)}_i \inf_{x>0}I(x). \label{ustep111} 
\ee 

This, along with \eqref{Psi lower bound} give that for any $\delta > 0$
\begin{align}
\varliminf_{m\to \infty}u^{(m)}_1 \rf_{\mathbf e_1}((r - \delta)&/u^{(m)}_1) -\delta\le \varliminf_{m \to \infty}\rf_{\uvec^{(m)}}(r) \notag \\
&\le\varlimsup_{m\to \infty} \rf_{\uvec^{(m)}}(r)\le\varlimsup_{m\to \infty} u^{(m)}_1\rf_{\mathbf e_1}(r/u^{(m)}_1) + \delta.
\label{0conv}
\end{align}
Let $r_0$ such that $\rf(r_0)=0$. Concavity of $\rf_{\mathbf e_1}$ gives a further upper bound $x_{\infty}(r- u_1r_0) \to x_{\infty}r$. For a lower bound, take a sequence $t_n \to x_{\infty}$. Then, for any $t_n$, \[u_1\rf_{\mathbf e_1}((r-\delta)/u_1) > t_n(r-\delta)-u_1\log \bE(e^{t_n\omega(\mathbf0)}).\]
Take $\varliminf_{u_1 \to 0}$ and then $t_n \to x_{\infty}$.
\end{proof} 

\begin{proof}[Proof of Proposition \ref{u0}]
We prove the second one. Let $\e>0$ and $N$ large enough, so that $\bE n^{-1} \log Z_{\fl{n\uvec}} < p(\uvec) + \e$ for $n>N$. Then, 
\begin{align*}
2 \exp\{ -c_2(\uvec)\e^2 n\} &\ge \bP\{ |\log Z_{\fl{n\uvec}} - \bE \log Z_{\fl{n\uvec}} |>n\e \}\\
&\ge \bP\{ \log Z_{\fl{n\uvec}} - \bE \log Z_{\fl{n\uvec}} >n\e \}\\
&\ge \bP\{ \log Z_{\fl{n\uvec}} >n p(\uvec) + 2n\e\}.
\end{align*}
 This suffices for the result.
\end{proof}

\begin{proof}[Proof of Proposition \ref{lvbehavior}.]
Let $m>0$ and $n>0$. Let $\beta > 0, r\in \bR$ and let $u \in \bR^d_+$ and for simplicity assume $\|\uvec\|_1=1$. Observe
\begin{align}
\log Z^{\beta}_{\fl{n\uvec}}
&\le \log \Big(|\Pi(\fl{n\uvec})|\max_{x_{\centerdot} \in \Pi(n\uvec)}\Big(\prod_{j=1}^{\|\fl{n\uvec}\|_1}e^{\om(x_j)}\Big)^{\beta}  \Big) \notag\\
&\le nC + \beta T(\fl{n\uvec})\notag
\end{align}
Hence
\be
\beta T(\fl{n\uvec}) \le \log Z^{\beta}_{\fl{n\uvec}} \le  nC + \beta T(\fl{n\uvec}).
\label{Tbounds}
\ee
Since  $I^{\infty}_{\uvec}$ is left continuous at $r$, we can estimate from below
\begin{align}
\lim_{\beta\to \infty}J^{\beta}_{\uvec}(\beta r)  &= - \lim_{\beta \to \infty}\lim_{n \to \infty} n^{-1} \log \bP\{  \log Z^{\beta}_{\fl{n\uvec}} \ge n\beta r \}  \notag \\
&\ge - \lim_{\beta\to \infty}\lim_{n \to \infty} n^{-1} \log \bP\{   nC + \beta T(\fl{n\uvec}) \ge n\beta r \}  \notag \\
&= \lim_{\beta\to \infty}I^{\infty}_{\uvec} (r-C\beta^{-1}) \notag \\
&= I^{\infty}_{\uvec} (r) \notag
\end{align}
The reverse inequality also follows with the same estimate and the left hand side of  \eqref{Tbounds}.
\end{proof}

\section{Quenched Large deviations for the polymer path and endpoint}
\label{pathLDP}

For this section, we restrict on the $\bP$ - full probability event 
\[  \Omega_0  = \Big\{\om:\lim_{n\to \infty} n^{-1}\log Z_{\fl{n\uvec}, \fl{n\vvec}}\} = p(\vvec-\uvec), \forall (\uvec, \vvec)  \in \bR^d_+ \times\bR^d_+ , \uvec\le \vvec\Big\}\] 
whose existence is given by Corollary \ref{lasterror}. We make no special mention of this fact in the proofs that follow.

\begin{proof}[Proof of Theorem \ref{qend}]
 Let $\|\uvec\|_1 = 1$. 
We compute
\begin{align}
- n^{-1}\log Q^{\om}_n\{ x_{n} = [n\uvec]\} 
&= - n^{-1}\log \frac{\sum_{x_{\centerdot} \in \Pi([n\uvec])}\prod_{j=1}^n e^{\beta \om(x_j)}}{\sum_{x_{0,n} \in \Pi_{\dtot}(n)}\prod_{j=1}^n e^{\beta \om(x_j)}}\notag\\
&= - n^{-1}\log \frac{Z_{[n\uvec]}}{Z^{\dtot}_n}\notag\\
&\stackrel{n\to \infty}{\longrightarrow}p(d^{-1}\mathbf{1}) - p(\uvec).
\end{align} \qedhere
\end{proof}

\begin{proof}[Proof of Theorem \ref{qpath}] We only treat the free endpoint case. The constrained endpoint result follows by similar arguments.

Fix $L \in \bR_+$ and let $\gamma:  [0,1] \to \bR^d_+$ be a curve such that  
 each coordinate $\gamma_j(t)$, $1\le j \le d$,  is non-decreasing and $L$-Lipschitz. Since $\gamma$ is Lipschitz, it has a derivative almost everywhere. For the upper bound, this is the only fact of $\gamma$ that we are going to use.

Pick $\e>0$ and $\cN_{\e}(\gamma)$ an $\e -$neighborhood of $\gamma$ in the $\|\cdot\|_1$ norm. (For the definition of this neighborhood, consider $\gamma$ as a set in $\bR^2$.) For this choice of $\e>0$,  let $M$ sufficiently large to define a partition of the time interval $[0,1]$ 
\[ \pi_M = \{ 0 = t_1 < t_2 < \cdots < t_M =1:  \gamma_j(t_{i+1}) - \gamma_j(t_{i}) < \e/2d, \,\, 1\le j \le d\}, \]
and assume without loss that for all partition points $t_i$, $\gamma'(t_i)$ exists.



Abbreviate $\gamma(t_i) = \uvec_i$ and define the rectangles 
\begin{align}
R(\fl{n\uvec_i}, \fl{n\uvec_{i+1}}) &= \{ \vvec \in \bZ^d : \fl{n\uvec_i} \le \vvec \le \fl{n\uvec_{i+1}},\,  \notag\\ 
&\phantom{xxxxxxxxxxx} 0\neq \|\vvec - \fl{n\uvec_i}\|_{\infty} \le \| \fl{n\uvec_{i+1}} - \fl{n\uvec_i}\|_{\infty}\}.\notag
\end{align}
The definition of $\pi_M$ implies that the disjoint rectangles $ R(\fl{\uvec_i}, \fl{\uvec_{i+1}})\subseteq n\cN_{\e}(\gamma)$. Microscopically, for any path $x_{0,n}$ define by $x^{i}_{0,n}$ to be the piece of the original path that lies in $ R(\fl {n\uvec_i}, \fl{n\uvec_{i+1}})$. To get an estimate for the quenched rate function from above, 
\begin{align}
Q^{\omega}_n\{ x_{0,n} \in n\cN_{\e}(\gamma)\} &\ge Q_n^{\omega}\{ x_{0,n} \text{ goes through } \fl{n\uvec_1}, \fl{n\uvec_2}, \cdots \fl{n\uvec_M}.\} \notag \\
&= Q_n^{\omega} \{x_{0,n}^{i} \in  R(\fl{n\uvec_i}, \fl{n\uvec_{i+1}}), x_{0,n}^i \text{ starts at } \fl{n\uvec_i},\, 1\le i \le M\} \notag \\
&= \frac{\prod_{i = 1}^{M-1}Z_{\fl{n\uvec_i}, \fl{n\uvec_{i+1}}}}{Z^{\dtot}_n} \label{pth1}
\end{align}
Take logarithms in \eqref{pth1}, divide by $-n$ and let $n\to \infty$ to conclude that 
\begin{align}
-\varliminf_{n\rightarrow \infty} n^{-1}\log Q_n^{\omega}\{ x_{0,n} \in n\cN_{\e}(\gamma)\} &\le p(d^{-1}\mathbf1) -\sum_{i=1}^{M-1} p(\gamma(t_{i+1}) - \gamma(t_{i})) \notag\\
&= p(d^{-1}\mathbf1) -\sum_{i=1}^{M-1} p\Big(\int_{t_i}^{t_{i+1}}\gamma'(s)\,ds\Big) \notag\\
&\le p(d^{-1}\mathbf1) -\sum_{i=1}^{M-1} \int_{t_i}^{t_{i+1}}p(\gamma'(s))\,ds \label{jen},
\end{align}
where \eqref{jen} is the result of Jensen's inequality applied on the concave function $p$ and of the fact that $p$ is 1-homogeneous, by Proposition \eqref{substd}. Equation \eqref{jen} gives the upper bound.

\begin{figure}
\begin{center}

\includegraphics[trim = 100 450 100 100, scale=0.8 ]{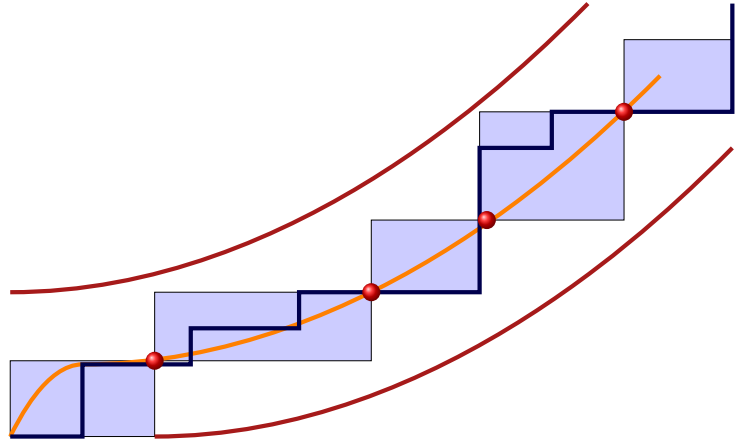}
\caption[Illustration of the upper bound in Theorem \ref{qpath}]{The idea of the proof of the upper bound in Theorem \ref{qpath}. The rectangles are defined using the the partition points (the circles). They are disjoint and inside the $\e-$ neighborhood, so any polymer chain inside those rectangles is inside the neighborhood.}
\label{QuenchedLB}
\end{center}
\end{figure}

\begin{figure}
\begin{center}
\includegraphics[trim = 200 460 200 200, scale=0.8 ]{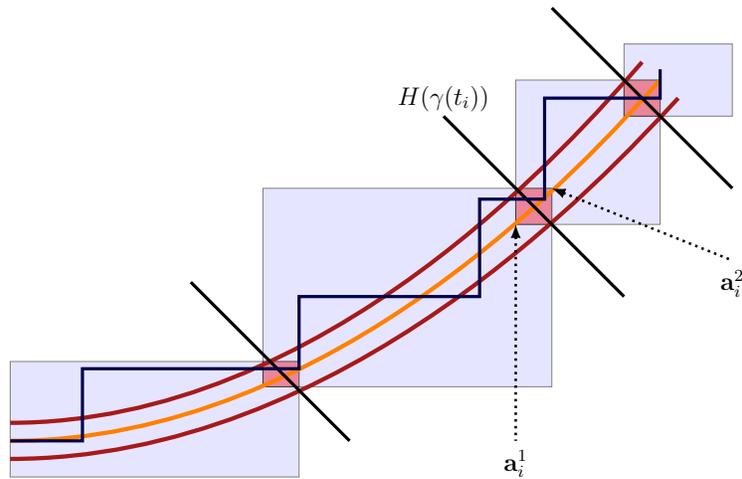}
\caption[ Illustration of the lower bound in Theorem \ref{qpath}]{ The idea of the proof of the lower bound in Theorem \ref{qpath}. The diagonal lines are the $\|\gamma(t_i)\|$ hyperplanes. The extra error (compared to proof of the lower bound) comes from the common small rectangles with the hyperplanes as diagonals. That error can become small if the $\e$-neighborhood is sufficiently narrow. }
\label{QuenchedUB}
\end{center}
\end{figure}

The remaining proof is about the lower bound. A word on the arrangement of the proof: The first part of what follows is used to specify an $\e_0 > 0$ such that certain macroscopic uniformity and continuity conditions are satisfied (here we use the fact that $\gamma$ is Lipschitz).  After $\e_0$ is specified we define a neighborhood $\cN_{\e}$ for $\e< \e_0$ and work microscopically to bound the quenched probabilities. That part of the proof works for any $\e<\e_0$. The need for the conditions that specify $\e_0$ become apparent after taking the limits (calculations \eqref{RS5}-\eqref{RS7}). 

Fix $L \in \bR_+$ and let $\gamma:  [0,1] \to \bR^d_+$ be a curve such that  
 each coordinate $\gamma_j(t)$, $1\le j \le d$,  is non-decreasing and $L$-Lipschitz.  An immediate consequence is that $0\le \gamma_j'(t)\le L$ for all $t$ where the derivative is defined. Since $\gamma$ is Lipschitz, the difference quotients are bounded
\be
\Big|\frac{\gamma_j(x) - \gamma_j(y)}{x- y}\Big| \le L \quad x,y \in [0,1],\,\, 1\le j \le d.
\ee

Let $\e'>0$ and restrict the limiting point-to-point free energy $p(\cdot)$ on the set $A_{L}=\{ \|\uvec\|_1 \le d L\}$. The function $p$ is uniformly continuous on $A_L$, so it admits a modulus of continuity and we can specify $\delta'>0$ so that
\be
\sup_{\substack{\|\uvec - \vvec\|_1 < \delta' \\ \uvec, \vvec \in A_L}}| p(\uvec) - p(\vvec)| < \e' .
\label{modulus1}
\ee
For a given $\e'>0$, denote  by $\Omega(\e')$ the supremum of $\delta'>0$ such that \eqref{modulus1} holds. As soon as $\delta'$ is specified, define a partition
\[ \lambda_M(\e') = \{0= t_1 <t_2 < \cdots < t_M =1\} \]
so that for all partition points $t_i$, the derivative $\gamma'(t_i)$ exists, and so that for all $1\le i \le M$ and $1 \le j \le d $,
\be
\Big|\frac{\gamma_j(t_{i+1}) - \gamma_j(t_{i})}{t_{i+1}-t_i} - \gamma_j'(t_i)\Big| < \frac{\delta'}{2d}.
\label{modulus2}
\ee

Pick $\e>0$ and $\cN_{\e}(\gamma)$ an $\e -$neighborhood of $\gamma$ in the $\|\cdot\|_1$ norm. Assume $\e$ is small enough so that 
\be 
\e< \e_0 =\min\Big\{ \Omega(\e'/2M), \,\e'/2M, \,\min_{1\le i \le M-1}\{ \| \gamma(t_{i+1})-\gamma(t_i)\|_1/ M \}\Big\}. \label{e-size}
\ee

For any vector $\uvec \in \bR^d_+$ define the hyperplane $ H(\uvec) = \{\xvec \in \bR^d_{+} : \| \xvec \|_1 = \| \uvec \|_1 \}$ and the positive half-space 
$H_{+}(\uvec) = \{\xvec \in \bR^d_{+} : \| \xvec \|_1 \ge  \| \uvec \|_1 \}$. 
We also denote  $H_{-}(\uvec) =  H_{+}^c(\uvec)$.

Observe that there exist vectors $\{\avec^j_{i}\}_{1\le i \le M-1}^{ j=1,2}$ such that for each $i,j$ the following hold:
\begin{enumerate}
\item $\avec_i^1 \le \gamma(t_i) \le \avec_i^2, \quad 1\le i \le M$,
\item $\| \gamma(t_i) - \avec_i^j \|_1 = \e, \quad j=1,2$,
\item $ R(\fl{n\avec_i^1 }, \fl{n\avec_{i+1}^2}) \supseteq  n\cN_{\e}(\gamma{[t_i, t_{i+1} )})\cap H_+(\fl{n\gamma(t_i)}) \cap H_{-}(\fl{n\gamma(t_{i+1})})=\cH^n_i.$
\end{enumerate}

The set $\cH^n_i$ is the set between two consecutive hyperplanes and in $n\cN_\e$ (see Figure \ref{QuenchedUB}). Denote the partition function $\cH^n_i$ by 
\[ Z_{\cH_i^n} = \sum_{x^{(i)} \in\cH_i^n } \prod_{j= \fl{n\gamma(t_i)}+1}^{\fl{n\gamma(t_{i+1})}}e^{\beta \om(x^{(i)}_j)},\]
where $x^{(i)}$ is any up-right path living in $\cH^n_i$.
 
Define $\Pi_{i, n\e}^{1} = \{ x_{0, n\e} : x_0 = \fl{n\avec_i^1}\}$, and $\Pi_{i, n\e}^{2} = \{ x_{0, n\e} : x_0 \in  H(\fl{n\gamma(t_i)}), x_{n\e} = \fl{n\avec_i^2}\}$, the set of all path of length $n\e$ that start from  $\fl{n\avec_{i}^j}$ and the set of paths that start somewhere on the hyperplane $H(\fl{n\gamma(t_i)})$ and end after $\fl{n\e}$ + O(1) steps at $ \fl{n\avec_i^2}$, respectively. The error comes from the integer parts, but is eventually immaterial so we ignore it for convenience.

Observe that any polymer chain that starts at $\fl{n\avec^1_i}$ and ends at $\fl{n\avec^2_{i+1}}$ has to cross the hyperplanes
$H(\fl{n\gamma(t_i)})$ and $H(\fl{n\gamma(t_{i+1})})$ at some points $\alpha$ and $\beta$ respectively.

The three conditions, along with \eqref{e-size} guarantee that $\bigcup _{1\le i \le M-1} R(\fl{n\avec_i^1}, \fl{n\avec_{i+1}^2}) \supseteq  n\cN_{\e}(\gamma)$ while the common  $R(\fl{n\avec_i^1}, \fl{n\avec_i^2})$ (the red-shaded rectangles in Figure \ref{QuenchedUB}) are pairwise disjoint and ``small'', in the sense of the following bound:

\begin{align}
 Z&_{\fl{n\avec_i^1}, \fl{n\avec_{i+1}^2}}
= \sum_{(\alpha, \beta) \in H(\fl{n\gamma(t_i)}) \times H(\fl{n\gamma(t_{i+1})})}Z_{\fl{n\avec_i^1},\alpha}Z_{\alpha,\beta} Z_{\beta, \fl{n\avec_{i+1}^2}} \notag \\
&\ge \sum_{(\alpha, \beta) \in H(\fl{n\gamma(t_i)}) \times H(\fl{n\gamma(t_{i+1})})}Z_{\alpha,\beta}\min_{x_{0,n\e} \in \Pi_{i, n\e}^{1}}\prod_{j=1}^{\fl{n\e}} e^{\beta \om(x_j)} \min_{x_{0, n\e} \in \Pi_{i+1, n\e}^{2}}\prod_{j=1}^{\fl{n\e}} e^{\beta \om(x_j)} \notag \\
& \ge Z_{\cH^n_i}\min_{x_{0, n\e} \in \Pi_{i, n\e}^{1}}\prod_{j=1}^{\fl{n\e}} e^{\beta \om(x_j)}\min_{x_{0,n\e} \in \Pi_{i+1, n\e}^{2}}\prod_{j=1}^{\fl{n\e}} e^{\beta \om(x_j)}\label{5am}.
\end{align}
Hence we can bound the probabilities
\begin{align}
Q^{\om}_n\{ x_{0, n} &\in n\cN_{\e}(\gamma)\}= Q_n^{\om}\{ x_{0,n} \in \cup_{i=1}^{M-1}\cH^n_i \} \notag\\
 &= (Z_{n})^{-1}{\sum_{x_{0,n} \in \cN_{\e}(\gamma)}\prod_{j=1}^{n} e^{\beta \om(x_j)}}\notag\\
 &\le (Z_{n})^{-1}\prod_{i=1}^{M-1}  Z_{\cH^n_i}\notag\\
 &\le (Z_{n})^{-1}\prod_{i=1}^{M} Z_{\fl{n\avec_i^1}, \fl{n\avec_{i+1}^2}}\notag \\
 &\phantom{xxxxxxxx} \times \Big(\min_{x_{0, n\e} \in \Pi_{i, n\e}^{1}}\prod_{j=1}^{\fl{n\e}} e^{\beta \om(x_j)}\times \min_{x_{0,n\e} \in \Pi_{i+1, n\e}^{2}} \prod_{j=1}^{\fl{n\e}} e^{\beta \om(x_j)}\Big)^{-1}\label{timcor}.
\end{align}
We are going to take logarithms on both sides of \eqref{timcor}. On the right-hand side, we have  $\log \min_{x_{0, n\e} \in \Pi_{i,n\e}^{1}}\prod_{j=1}^{\fl{n\e}} e^{-\beta| \om(x_j)|}$  which is the first passage percolation time  of $n\e$ steps if the weights were negative (so the negative of last passage percolation time if the weights where positive) with limiting constant $c_{-|\om|}(\e)$. 

Then, we conclude
\begin{align}
-\varlimsup_{n\rightarrow \infty} n^{-1}\log Q^{\omega}_n\{ x_{0,n} \in n\cN_{\e}(\gamma)\} 
&\ge p(d^{-1}\mathbf1) -\sum_{i=1}^{M-1} p(\avec^2_{i+1} - \avec^1_{i})+ 2Mc_{-|\om|}(\e) \notag\\
&\ge p(d^{-1}\mathbf1) -\sum_{i=1}^{M-1} p(\avec^2_{i+1} - \avec^1_{i})+ 2\e'd^{-1}c_{-|\om|}(\mathbf1) \notag\\
&\ge p(d^{-1}\mathbf1) -\sum_{i=1}^{M-1} p(\gamma(t_{i+1}) - \gamma (t_{i})) - C\e' \label{RS5}\\
&= p(d^{-1}\mathbf1) -\sum_{i=1}^{M-1} p\Big(\frac{\gamma(t_{i+1}) - \gamma (t_{i})}{t_{i+1}-t_i}\Big)(t_{i+1}-t_i) - C\e'\label{RS6}\\
&\ge p(d^{-1}\mathbf1) -\sum_{i=1}^{M-1} p(\gamma'(t_i))(t_{i+1}-t_i) - C\e'\label{RS7}.
\end{align}
Equation \eqref{RS5} is the result of  \eqref{e-size} and \eqref{modulus1}. Homogeneity of $p$ gives \eqref{RS6} and then \eqref{e-size} and \eqref{modulus2} give \eqref{RS7}. The bound in \eqref{RS7} is independent of $\e$ so letting $\e \to 0$ does not affect it. To get the result, let the mesh of the partition tend to $0$ and then let $\e' \to 0$.
\end{proof}

\chapter{The log-gamma Polymer Model}\label{log-gamma model}   

\section{The model}\label{logG}

\subsection{Introduction and results}

In the present chapter, we derive an explicit expression for the upper tail rate function for $\log Z_{\fl{n\uvec}}$  in the case of the $1+1$ dimensional log-gamma polymer. The computations are tractable exactly because of the Burke property. More detailed information about the model and basic properties can be found in Section \ref{loggpol}.

 The results for the particular $1+1$ dimensional  log-gamma model. The distributions of the $\om$ weights are i.i.d.
\be
\om(i,j) \sim \log Y, \quad \text{ where } Y^{-1} \sim \text{Gamma}(\mu),
\label{burkwait}
\ee
where the density of the Gamma$(\mu)$ is given by \eqref{density}. 
For this choice of i.i.d.~ weights, denote by 
\be
\lmgf_{\mu}(\xi) = \log \bE\big( e^{\xi \om(0,0)}\big)= \log \Gamma(\mu-\xi) - \log\Gamma(\mu)
\label{lmgfom}
\ee
the logarithmic moment generating function.

It is convenient for the proof of these results to write the vectors in $\bR_+^{2}$ using both their coordinates. The main result is an explicit formula for the upper tail large deviation rate function for the logarithm of the partition function. 

\begin{theorem} 
Let $r\in \bR$ and let $\om(i,j)$ to be distributed as in \eqref{burkwait}. The function $\rf_{s,t}(r)$ defined  by \eqref{psidef} is given by
\be
 \rf_{s,t}(r) 
                    = \sup_{\xi \in[0, \mu)} \Big\{ r\xi -\inf_{\xi < \theta < \mu} \bigl( t\lmgf_{\theta}(\xi)-s\lmgf_{\mu-\theta}(-\xi)\bigr)\Big\}.
\label{phiexact}
\ee
\label{up-rate}
\end{theorem}

\begin{remark}
While the symmetry of $\rf_{s,t}$ is clear by definition \eqref{psidef}, it is not immediately obvious from \eqref{phiexact}. From the proof of the theorem for $s\le t$ one can check that we can restrict the set where the inner infimum is taken to $\theta \in [(\mu+ \xi)/2, \mu)$. Under the assumption that $s\ge t$ it is easy to check that the infimum  is now attainable for $\theta \in (\xi, (\mu+ \xi)/2]$. If $s\le t$ and $\theta_0$ gives the infimum $\gamma_{*}$, it is easy to check that interchanging the roles of $s,t$ will give the same infimum $\gamma_{*}$ at $\theta_1 = \mu + \xi - \theta_0$. These symmetries will be explained in detail in the proof the theorem. 
\end{remark}
The next result is about the free-endpoint case.
 
\begin{theorem} Let $s>0$, $r\in \bR$. The large deviation rate function \eqref{psidef2}, for $\beta=1$ is
\be
-\lim_{n\rightarrow \infty}n^{-1}\log\bP\{ \log Z_{\fl{ns}}\ge nr \} =  \rf_{s/2,s/2}(r)
\ee
where $ \rf_{s,t}(r)$ is given by \eqref{phiexact}.
\label{free-rate}
\end{theorem}.

\subsection{The model and the Burke property}
\label{loggpol}

For the rest of this chapter we also the the parameter $\beta$ to equal $1$. Henceforth we adopt the multiplicative notation for the polymer measure. It is also convenient to adjust the notation for the partition functions and redefine the rate functions: 

On each site $(i,j)$ of  $\bZ_+^2$ we assign weights $Y_{i,j}$. For any $(k,\l)< (m,n)$ define the partition function for paths that start from $(k,\l)$ and whose endpoint is constrained to be $(m,n)$, by
\be
Z_{(k,\l),(m,n)} = \sum_{x_{\cdot}\in \Pi_{(k,l),(m,n)}}\prod_{j=k+\l+1}^{m+n}Y_{x_j},
\label{partition function}
\ee
 where $ \Pi_{(k,l),(m,n)}$ is the collection of up-right paths $x_{\cdot} = (x_j)_{k+\l \le j \le m+n }$ inside the rectangle $R_{k,\l}^{m,n} = \{k,k+1,...,m\}\times\{\l,...,n\}$ that go from $(k,\l)$ to $(m,n)$: $x_{k+\l}=(k,\l)$, $x_{m+n}=(m,n)$. We adopt the convention that $Z_{(k,\l),(m,n)}$ does not include the weight at the starting point. In the case that the weight at the starting point needs to be considered we also define  
\be
Z^{\square}_{(k,\l),(m,n)} = \sum_{x_{\cdot}\in \Pi_{(k,l),(m,n)}}\prod_{j=k+\l}^{m+n}Y_{x_j} = Y_{k,\l}Z_{(k,\l),(m,n)},
\label{squareparf}
\ee
If a value is needed, then assume that $Z_{k,\l,(k,\l)} = 1.$  In the special case where $(k,\l) = (0,0)$ we omit the subscript from the above notation and we also set $Y_{0,0}=1.$

We assign distinct  weight distributions on the  boundaries $\bN\times \{0\}$, $\{0\}\times \bN$ and in the bulk $\bN^2$.  To emphasize this, the symbols $U$ and $V$ will denote the weights on the horizontal and vertical boundary respectively:
\be
U_{i,0} = Y_{i,0}\quad \textrm{and} \quad V_{0,j} = Y_{0,j}.
\label{boundary letters}
\ee 

Our results depend on the explicit distribution of the weights; all weights are reciprocals of gamma variables. To be precise, here are the assumptions on the distributions:
\begin{assumption}
Let $0< \theta <\mu < \infty$. We assume that the weights $\{U_{i,0}, V_{0,j}, Y_{i,j}: i,j \in \bN\}$ are independent with distributions 
\be
U_{i,0}^{-1}\sim \text{Gamma}(\theta), V_{0,j}^{-1}\sim \text{Gamma}(\mu-\theta), \textrm{ and } Y_{i,j}^{-1}\sim \text{Gamma}(\mu).  
\label{weight distributions}
\ee
\label{weights2}
\end{assumption}
Given the initial weights $\{U_{i,0}, V_{0,j}, Y_{i,j}: i,j \in \bN\}$ and starting from the lower left corner of $\bN^2$, define inductively for $(i,j) \in \bN^2$
\be
U_{i,j} = Y_{i,j}\Big(1+\frac{U_{i,j-1}}{V_{i-1,j}}\Big),\,  V_{i,j} = Y_{i,j}\Big(1+\frac{V_{i-1,j}}{U_{i,j-1}}\Big)\, \textrm {and } X_{i-1,j-1} = \Big( \frac{1}{U_{i,j-1}}+\frac{1}{V_{i-1,j}}\Big)^{-1}.
\label{propagation}
\ee 

The partition function for the model with the boundary condition is denoted by $Z^{(\theta)}_{m,n}$  satisfies 
\be 
Z^{(\theta)}_{m,n}= Y_{m,n}(Z^{(\theta)}_{m-1, n}+Z^{(\theta)}_{m,n-1}) \textrm{ for } (m,n)\in \bN^2
\label{lptform}
\ee
and one can check inductively that
\be
U_{m,n}= \frac{Z^{(\theta)}_{m,n}}{Z^{(\theta)}_{m-1,n}}\,\, \textrm{ and }\,\,  V_{m,n}= \frac{Z^{(\theta)}_{m,n}}{Z^{(\theta)}_{m,n-1}}.
\label{fractionid}
\ee 
Equations \eqref{lptform} and \eqref{fractionid} are also valid for $\zsq$ since the weight at the origin is canceled from the equations. 

The key result that allows explicit calculations for this model is the Burke-type Theorem 3.3 in \cite{sepp-poly}.

Let $z_{\cdot} = (z_k)_{k\in \bZ}$ be a nearest-neighbor down-right path in $\bZ^2_{+}$, that is, $z_k \in \bZ^2_+$ and $z_k-z_{k-1}=e_1$ or $-e_2$. Denote the undirected edges of the path by  $f_k=\{z_{k-1},z_k\}$ and let 
\be
T_{f_k} = \begin{cases}
                   U_{z_k} & \text{if } f_k \text{ is a horizontal edge}\\
                   V_{z_{k-1}}  & \text{if } f_k \text{ is a vertical edge}.
\end{cases}
\notag
\ee   
Let the lower left interior of the path be the vertex set $\mathcal{I}=\{(i,j) \in \bZ^2_+: \exists \, m \text{ so that } (i+m,j+m)\in \{z_k\}\}.$ 

Recall the definition of $X_{i,j}$ from  \eqref{propagation}.
\begin{theorem}[\cite{sepp-poly}-Burke Property]
Under the assumption \eqref{weights2}, and for any down-right path $(z_k)_{k\in \bZ}$ in $\bZ_+^2$, the variables $ \{T_{f_k}, X_z: k \in \bZ, z\in \mathcal{I}\}$ are mutually independent with marginal distributions
\be
U^{-1}\sim \text{Gamma}(\theta), V^{-1}\sim \text{Gamma}(\mu-\theta), \textrm{ and } X^{-1}\sim \text{Gamma}(\mu).  
\label{wd2}
\ee
\end{theorem} 
From this, one can compute 
\be
\bE(\log Z^{(\theta)}_{m,n})=m\bE(\log U)+n\bE(\log V) = -m\Psi_0(\theta) - n \Psi_0(\mu-\theta). 
\label{mean log}
\ee

In \cite{sepp-poly} a law of large numbers is proved for the limiting free energy in the case of no boundary weights. Let $(s,t) \in \bR^{2}_{+}$ and observe that there exists a unique $\theta = \theta_{s,t} \in (0, \mu)$ such that $t\Psi_1(\mu-\theta) = s \Psi_1(\theta).$ Define
\be
p_{\mu}(s,t)= - (s\Psi_0(\theta_{s,t}) + t \Psi_0(\mu-\theta_{s,t})).
\label{b-limit}
\ee

For the model without boundaries with $Y^{-1}_{i,j} \sim \textrm{Gamma}(\mu),\, i\ge0, j\ge 0$, the limiting free energies can be evaluated explicitly to be 
\be
\lim_{n\rightarrow \infty} n^{-1}\log Z_{\fl{ns}, \fl{nt}} = p_{\mu}(s,t) \quad  \bP - a.s.
\label{lawcons} 
\ee
and
\be
\lim_{n\rightarrow \infty} n^{-1}\log Z^{\dtot}_{\fl{ns}} = -s\Psi_0(\mu/2)\quad  \bP - a.s.
\label{lawtotal} 
\ee

\section{Continuity of the rate function on the boundary}

The logarithmic moment generating function of the bulk weights
$\log Y_{i,j}\sim$ $-\log$ Gamma$(\mu)$ is
\[
\lmgf_\mu(\xi)=\log \bE\bigl( e^{\xi\log Y}\bigr) = 
\begin{cases}   \log \Gamma(\mu-\xi)-\log \Gamma(\mu), &\xi < \mu\\
\infty,  &\xi\ge\mu. \end{cases} 
\]
Its convex dual is the Cram{\'e}r rate function (recall \eqref{c1})  $I_\mu$
of this distribution,
given by 
\begin{align} 
I_{\mu}(r) 
                 &=-r\Psi^{-1}_0(-r)-\log\Gamma(\Psi^{-1}_0(-r))+\mu r+ \log\Gamma(\mu), \quad r\in \bR.  
\label{logcram}
\end{align}

From Theorem \ref{phidef} we know that the rate function for the model with i.i.d\ weights is continuous on the boundary, and in this case, its value equals the Cram{\'e}r rate function for the sum of i.i.d.\ weights with inverse Gamma$(\mu)$ distribution.  


The rate function on the boundary is given by
\be
 \rf_{0,x}(r) =  \rf_{x,0}(r) =  \begin{cases}
                                                   xI_{\mu}(rx^{-1}), &\quad r\ge  -x\Psi_0(\mu),\, x > 0\\
                                                   r\mu , &\quad r\ge 0,\, x=0 \\ 
                                                  0,   &\quad r <  -x\Psi_0(\mu),\, x \ge 0.
                                                  \end{cases}
\label{s,t=0}
\ee

For the second branch of \eqref{s,t=0} we used 
\be 
\lim_{x\searrow 0} x\log \Gamma(\Psi_0^{-1}(-rx^{-1})) =0, \quad r> 0.
\label{L'Hlimit}
\ee 
Note that $\rf_{0,0}(r)=r\mu$ as given by \eqref{j0def}. (Here $\mu=x_{\infty}$). One can also interpret the second branch as the large deviation rate function for a single $\log Y$  random variable, where $Y^{-1}\sim $ Gamma($\mu$).
 
The strong law of large numbers for the limiting constant of the free energy from \eqref{lawcons} and continuity of $ \rf$ give the support described in the theorem. The fact that $\rf$ is strictly increasing for $r \ge p_{\mu}(s,t)$  can follow independently after finding the rate function explicitly in Theorem \ref{up-rate}.

\section{Exact point-to-point rate function}
\begin{lemma}[Varadhan's lemma]
Let $Z_{\fl{ns},\fl{nt}}$ the partition function given by \eqref{partition function} with weights $Y$ such that $Y^{-1}\sim \text{Gamma}(\mu)$. Also let $\rf_{s,t}(r)$ the upper tail large deviation for $\log Z_{\fl{ns},\fl{nt}}$. Then for $0\le\xi < \mu$
\be
\lim_{n\to \infty} n^{-1}\log\bE e^{\xi \log Z_{\fl{ns},\fl{nt}}} = \sup_{r\in \bR}\{ r\xi - \rf_{s,t}(r) \} = \rf^*_{s,t}(\xi).
\notag
\ee 
\label{Varadhan's lemma}
\end{lemma}

\begin{proof}
Let $\gamma_{\inf} = \varliminf_{n\to \infty} n^{-1}\log\bE e^{\xi \log Z_{\fl{ns},\fl{nt}}}$ and $\gamma_{sup} = \varlimsup_{n\to \infty} n^{-1}\log\bE e^{\xi \log Z_{\fl{ns},\fl{nt}}}.$ First an exponential Chebychev argument for a lower bound:
\begin{align}
n^{-1}\log \bP\{ \log Z_{\fl{ns},\fl{nt}} \ge nr \} \le -\xi r + n^{-1}\log \bE e^{\xi \log Z_{\fl{ns},\fl{nt}}}.
\notag
\end{align}
By letting $n\to \infty$ on a suitable subsequence we get that for all $r\in \bR$
\[
\gamma_{\inf} \ge \xi r - \rf_{s,t}(r).
\]
Optimizing over $r$ we get $\gamma_{\inf}\ge  \rf_{s,t}^*(\xi).$

For the upper bound, first note that there exists $\alpha>1$ such that $\alpha \xi < \mu$,
\be
\sup_{n} \Big( \bE  e^{\alpha\xi \log Z_{\fl{ns},\fl{nt}}} \Big)^{1/n} < \infty.
\label{supcond}
\ee 
To see this, distinguish cases where $\alpha \xi <1$ or otherwise. For $\alpha\xi <1$,
\begin{align}
\Big(\bE  e^{\alpha\xi \log Z_{\fl{ns},\fl{nt}}} \Big)^{1/n} 
&=\Big(\bE\Big(\sum_{x \in \Pi(\fl{ns},\fl{ nt})} \prod_{i=1}^{\fl{ns}+\fl{nt}} Y_{x_j}\Big)^{\alpha\xi} \Big)^{1/n} \notag \\
&\le\Big(\sum_{x \in \Pi(\fl{ns},\fl{ nt})} \prod_{i=1}^{\fl{nt}+\fl{ns}}\bE Y^{\alpha\xi}\Big)^{1/n}\notag \\  
&\le e^{F(s,t) + o(1)}(\lmgf_{\mu}(\alpha\xi))^{t+s}.\notag  
\end{align} 
For $\alpha\xi \ge 1$, we use Jensen's inequality. Let N denote the number of paths.
\begin{align}
\Big(\bE  e^{\alpha\xi \log Z_{\fl{ns},\fl{nt}}} \Big)^{1/n} 
&=\Big(\bE\Big(\sum_{x \in \Pi(\fl{ns},\fl{ nt})} \prod_{i=1}^{\fl{ns}+\fl{nt}} Y_{x_j}\Big)^{\alpha\xi} \Big)^{1/n} \notag \\
&\le\Big(N^{\alpha\xi}\sum_{x \in \Pi(\fl{ns},\fl{ nt})} \prod_{i=1}^{\fl{nt}+\fl{ns}}\bE Y^{\alpha\xi}\Big)^{1/n}\notag \\  
&\le e^{\alpha\xi F(s,t) + O(1)}(\lmgf_{\mu}(\alpha\xi))^{t+s}.\notag  
\end{align}
Now, we can show that 
\be
\lim_{r \to \infty}\varlimsup_{n\to \infty}n^{-1}\log \bE \Big(e^{\xi \log Z_{\fl{ns},\fl{nt}}}\mathbf{1}\{ \log Z_{\fl{ns},\fl{nt}} \ge nr\}\Big)= - \infty.
\label{T-F}
\ee
Using H{\"o}lder's inequality,
\begin{align}
n^{-1}\log \bE \Big(&e^{\xi \log Z_{\fl{ns},\fl{nt}}}\mathbf{1}\{ \log Z_{\fl{ns},\fl{nt}} \ge nr\}\Big)  \notag\\
&\le \alpha^{-1}\log \sup_{n} (\bE e^{\alpha\xi \log Z_{\fl{ns},\fl{nt}}})^{1/n} + \frac{\alpha-1}{\alpha}n^{-1}\log \bP\{\log Z_{\fl{ns},\fl{nt}} \ge nr\}.\notag
\end{align}
Taking a limit $n\to \infty$ we conclude
\be
\varlimsup_{n\to \infty}n^{-1}\log \bE \Big(e^{\xi \log Z_{\fl{ns},\fl{nt}}}\mathbf{1}\{ \log Z_{\fl{ns},\fl{nt}} \ge nr\}\Big) \le C - \frac{\alpha-1}{\alpha} \rf_{s,t}(r).
\ee
Letting $r$ to $\infty$ concludes the proof, since $\rf_{s,t}(r)$ is a non-constant increasing convex function.

To establish an upper bound let $\delta >0$ and partition $\bR$ so that for $i\in \bZ$, $r_{i}= i\delta$. Then for any $m$
\begin{align}
n^{-1}\log\Big( \bE e^{\xi \log Z_{\fl{ns},\fl{nt}}} \Big)& = n^{-1}\log \Big(\sum_{i=-\infty}^{\infty} \bE e^{\xi \log Z_{\fl{ns},\fl{nt}}}\mathbf{1}\{nr_i \le \log Z_{\fl{ns},\fl{nt}} < nr_{i+1}\}\Big) \notag\\
&\le n^{-1}\log \Big( \sum_{i=-m}^{m} e^{n \xi r_{i+1}}\bP\{ \log Z_{\fl{ns},\fl{nt}} \ge nr_i\} \notag\\ 
&\phantom{xxxxxxx}+ e^{n\xi r_{-m}} + \bE \Big(e^{\xi \log Z_{\fl{ns},\fl{nt}}}\mathbf{1}\{ \log Z_{\fl{ns},\fl{nt}} \ge nr_m\} \Big)\Big)
\end{align} 
A limit along a suitable subsequence yields
\begin{align}
\gamma_{\sup} &\le \max\big\{\max_{-m\le i \le m}\{ \xi r_{i+1} - \rf_{s,t}(r_i)\}, \xi r_{-m},  C - \frac{\alpha-1}{\alpha} \rf_{s,t}(r_m)\}  \big\}\notag \\
&\le \max\{ \sup_{r}\{ \xi r - \rf_{s,t}(r)\} -\delta\xi,  \xi r_{-m},  C - \frac{\alpha-1}{\alpha} \rf_{s,t}(r_m)\}  \big\}\notag
\end{align}
To finish the proof, let $\delta \to 0$ and $m\to \infty$.
\end{proof}

An immediate consequence is that $\log Z^{\square}$ and $\log Z$ have the same rate functions. This follows from having the same convex dual for $0\le \xi < \mu$ (for $\xi \ge \mu$ both rate functions are $\infty$):
\begin{align}
\lim_{n \to \infty}n^{-1} \log \bE e^{\xi\log Z^{\square}_{\fl{ns}, \fl{nt}}}&= \lim_{n \to \infty}n^{-1} \log \bE e^{\xi\log Z_{\fl{ns}, \fl{nt}} + \xi\log Y_{1,1}}\notag \\ 
&= \rf^{*}_{s,t}(\xi) + \lim_{n \to \infty}n^{-1} \log \bE Y_{1,1}^{\xi}\notag \\
&= \rf^{*}_{s,t}(\xi).\notag    
\end{align} 
We are using this fact in the remaining part of the section without alerting the reader.

 For what follows we need some notational conventions. Assume the polymer lives in the rectangle $\{0,\dotsc,\fl{ns}\}\times \{0,\dotsc,\fl{nt} \}$ and let $-\fl{nt} \le k \le \fl{ns}$. Let $Y_{0,0} = 1$ and  define
\be
\eta_k = \begin{cases}
         \displaystyle \prod_{j=-k+1}^{\fl{nt}} V_{0,j}^{-1}, &\text{ for } -\fl{nt} \le k \le -1,\\
          \eta_{-1}, &k=0 \\
          \displaystyle \eta_0 \prod_{i=1}^{k}U_{i,0}   , &\text{ for } 0<  k \le \fl{ns},\\
          \end{cases}
\label{sums}
\ee
and 
\be
\mathbf{k}=\begin{cases}
                     (1,-k),& -\fl{nt} \le k \le -1,\\
                     (1,1), & k= 0, \\
                     (k,1), &  0<  k \le \fl{ns},
                     \end{cases}
\quad\text{and}\quad 
\fl{n\mathbf{a}}=\begin{cases}
                     (1,\fl{-na}) ,&  -t \le a < 0,\\
                     (1,1), & a= 0,\\
                     (\fl{na}, 1), &  0<  a \le s.
                     \end{cases}
\label{exitvec}
\ee

It is going to be notationally convenient to assume that $k = \fl{na}$ for some $a\in [-t,s]$. Whenever this happens, we identify $\mathbf{k}=\fl{n\mathbf{a}}$ and we assume that $n$ is large enough so that $\fl{na} \neq 0 $. When we take the limit as $n \rightarrow \infty$ to compute the various rate functions, we will need a continuous (and scaled) version of \eqref{exitvec}. For this reason we abuse this notation by writing
\be
\mathbf{a}= \lim_{n\rightarrow\infty}n^{-1}\fl{n\mathbf{a}} = \begin{cases}
                     (0,-a) ,&  -t \le a < 0,\\
                     (0,0), & a= 0\\
                     (a, 0), &  0<  a \le s.
                     \end{cases}
\label{exitvec2}
\ee
Observe that for all $a \in [-t,s]$, the r.v.\ $\log \eta_{\fl{na}}$ is a sum of independent $\log$-gamma random variables: 
 For $a < 0$, $\log \eta_{\fl{na}}$ is just a sum of i.i.d.\ $\log$ Gamma$(\mu-\theta)$, the Cram{\'e}r rate function exists and so are the lower and upper large deviations rate functions. 

For $a>0$, 
\be
\log \eta_{\fl{na}} = \sum_{j=1}^{\fl{nt}} \log V_{0,j}^{-1} -\sum_{i=1}^{\fl{na}}\log U_{i,0}^{-1}.
\notag
\ee
 Setting $L_{n} =  \sum_{j=1}^{\fl{nt}} \log V_{0,j}^{-1}$,  $Z_{n} =  -\sum_{i=1}^{\fl{na}}\log U_{i,0}^{-1}$, we appeal to Lemma \ref{sumsind} so the upper large deviation rate function 
\be
 -\lim_{n\rightarrow \infty}n^{-1}\log \bP\{\log \eta_{\fl{na}} \ge nr \} =  \kappa_a(r)
\label{liadef}
\ee
exists and is convex and continuous. 
With these definitions we can have the following computational lemma that we are using throughout.

\begin{lemma} Fix $a \in [-t,s]$ and let $\kappa_{a}(r)$ defined by \eqref{liadef}. Then 
\be
\kappa^*_a(\xi) =  \begin{cases}
                                       (t+a)\big(\log\Gamma(\mu-\theta + \xi) - \log \Gamma(\mu - \theta)\big), \quad -t\le a\le0,\,\, \xi \geq 0, \vspace{0.1 in}\\
                                        t\big(\log\Gamma(\mu-\theta + \xi) - \log \Gamma(\mu - \theta)\big) + a\big(\log \Gamma(\theta - \xi) - \log \Gamma(\theta)\big)\\
\phantom{xxxxxxxxxxxxxxxxxxxxxxxxxxxxxxxxx} 0< a \le s, \,\, 0 \le \xi < \theta, \\
\infty, \phantom{xxxxxxxxxxxxxxxxxxxxxxxxxxxxxx1}\text{otherwise}.
                                       \end{cases}
\label{l*}
\ee
\end{lemma}

\begin{proof}
Fix $a \in [-t,s].$ For $-t\le a\le 0$, the first branch of \eqref{l*} is the logarithmic moment generating function for log-gamma weights. The second branch comes by taking the limiting logarithmic moment generating function for $\xi>0$ of $\eta_{\fl{na}}$. 
\end{proof}




Recall \eqref{exitvec}. Let $(a, b) \in [-t,s]^2$ and let $m_{\kappa,a}$ and $m_{\rf, b}$ the zeros of $\kappa_a$ and $J_{(s,t) - \mathbf b}$ respectively. Define
\begin{align}
H^{a,b}_{s,t}(r) &= \lim_{n\to \infty}n^{-1}\log \bP\{ \log \eta_{\fl{na}} + \log Z^{\square}_{\fl{n\mathbf{b}}, (\fl{ns},\fl{nt})} \ge nr\}    \notag\\
                            & = \begin{cases}
0, & r <  m_{\kappa,a}+ m_{\rf, b}\\
\displaystyle\inf_{m_{\kappa,a}\le x \le r- m_{\rf, b}}\{ \kappa_a(x) + \rf_{(s,t) - \mathbf{b}}(r-x)\}, & \text{otherwise}.
\end{cases}
 \label{Hdef} 
\end{align}
where $\kappa_a(x)$ is given by \eqref{liadef}. The existence of $H^{a,b}_{s,t}(r)$ is established by Lemma \ref{sumsind} and continuity in the $b$ argument (when $a,s,t,r$ are fixed) follows directly from the continuity of $\rf$ in $b$, the fact that $\rf$ is always finite and that $x$ can be restricted in a compact set. In the case where $a=b$ we define $H^a_{s,t}(r)= H^{a,a}_{s,t}(r)$.

Let $s,t> 0$, $r\in \bR$.  Let $\big\{U_{i, \fl{nt}}\big\}_{1\le i\le \fl{ns}}$ be the weights as defined by \eqref{propagation}. Define
\be
R_s(r) = -\lim_{n \rightarrow \infty} n^{-1}\log \bP\Bigg\{ \sum_{i=1}^{\fl{ns}}\log U_{i,\fl{nt}}\ge nr \Bigg\}=\begin{cases} s I_{\theta}(rs^{-1}), &\quad r\ge -s\Psi_0(\theta),\\
                                    0, &\quad \text{otherwise}, 
\end{cases}
\label{JJ}
\ee 
with convex dual
\be
R^*_s(\xi) = \begin{cases} s\log \Gamma(\theta - \xi)-s\log \Gamma(\theta),\quad 0\le \xi< \theta,
\\
\infty, \quad \textrm{otherwise},
\end{cases}
\label{J*}
\ee
 where we use \eqref{stardef} and \eqref{logcram} to obtain the first branch.


\begin{figure}
\begin{center}
\includegraphics{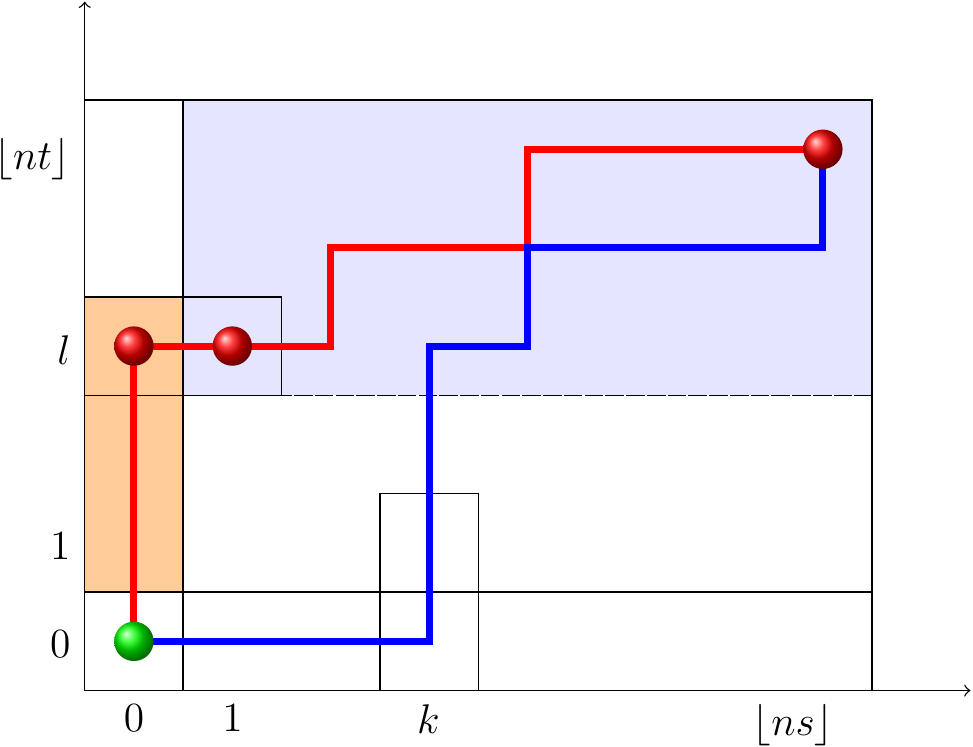}
\caption[Decomposition of $Z_{\fl{ns},\fl{nt}}$ according to the polymer exit point]{Two possible polymer paths with weights described by \eqref{weight distributions}. The shaded parts explain the decomposition of the partition function $Z_{\fl{ns},\fl{nt}}$  needed for Lemma \ref{decomp}.}
\label{polypic}
\end{center}
\end{figure}

\begin{lemma}
Let $s,t > 0$ , $a \in [-t,s]$ and $r\in \bR$. Let (for fixed $s,t, a$) $  \kappa_a$, $H^a_{s,t} $ and $R_s$ be defined by \eqref{liadef}, \eqref{Hdef} and \eqref{JJ}. Then
\be
R_s(r) =\inf_{-t\le a \le s }\{H^a_{s,t}(r)\} =  \inf_{-t\le a \le s} \,\,\inf_{m_{\kappa,a}\le x \le r- m_{\rf, a}}\{  \kappa_a(x) +  \rf_{(s,t)-\mathbf{a}}(r-x)\}.
\label{Jdefi}
\ee 
\label{decomp}
\end{lemma}
\begin{proof}
We start by decomposing $ Z^{(\theta)}_{\fl{ns}, \fl{nt}}$ according to the exit point of the polymer path from the boundary:
\begin{align}
 Z^{(\theta)}_{\fl{ns}, \fl{nt}}&= \sum_{x_{\cdot}\in \Pi_{(0,0),(\fl{ns}, \fl{nt})}}\prod_{j=1}^{\fl{ns}+ \fl{nt}} Y_{x_j} \notag \\
                             &= \sum_{\l=1}^{\fl{nt}}\bigg\{\bigg(\prod_{j=1}^{\l}V_{0,j}\bigg)Z^{\square}_{(1,\l),(\fl{ns},\fl{nt})}\bigg\}+  \sum_{k=1}^{\fl{ns}}\bigg\{\bigg(\prod_{i=1}^{k}U_{i,0}\bigg)Z^{\square}_{(k,1),(\fl{ns},\fl{nt})}\bigg\}.
\label{deco}
\end{align}

 Dividing both sides of \eqref{deco} by $Z^{(\theta)}_{0,\fl{nt}} = \prod_{j=1}^{\fl{nt}}V_{0,j}$ and through multiple uses of \eqref{fractionid}, we get 
\begin{align}
\prod_{i=1}^{\fl{ns}}U_{i,\fl{nt}} &=\sum_{\l=1}^{\fl{nt}}\bigg\{\bigg(\prod_{j=\l+1}^{\fl{nt}}V^{-1}_{0,j}\bigg)Z^{\square}_{(1,\l),(\fl{ns},\fl{nt})}\bigg\} \notag\\ &\phantom{xxxxxxxxxxxxxxxxxx}+\sum_{k=1}^{\fl{ns}}\bigg\{\bigg(\prod_{j=1}^{\fl{nt}}V^{-1}_{0,j}\prod_{i=1}^{k}U_{i,0}\bigg)Z^{\square}_{(k,1),(\fl{ns},\fl{nt})}\bigg\}\notag\\
&=\sum_{\substack{ k = -\fl{nt} \\[1pt] k \neq 0}}^{\fl{ns}}\eta_k Z^{\square}_{\mathbf{k},(\fl{ns},\fl{nt})}.
\label{L-Zdec}
\end{align}
Since all terms are nonnegative we get the bounds 
\begin{align}
 \eta_kZ^{\square}_{\mathbf{k},(\fl{ns},\fl{nt})} \le \prod_{i=1}^{\fl{ns}}U_{i,\fl{nt}}\le \sum_{k = -\fl{nt}}^{\fl{ns}}\eta_k &Z^{\square}_{\mathbf{k},(\fl{ns},\fl{nt})}\le \notag \\ 
&\le \fl{n(s+t)}\max_{-\fl{nt}\le k \le \fl{ns}} \eta_kZ^{\square}_{\mathbf{k},(\fl{ns},\fl{nt})}. \notag 
\end{align}
Take logs of these inequalities to get
\begin{align}
\log \eta_k + \log &Z^{\square}_{\mathbf{k},(\fl{ns},\fl{nt})} \le\sum_{i=1}^{\fl{ns}}\log U_{i,\fl{nt}}\le\log\bigg\{ \sum_{k = -\fl{nt}}^{\fl{ns}}\eta_k Z^{\square}_{\mathbf{k},(\fl{ns},\fl{nt})}\bigg\}\notag \\ 
&\phantom{xxxxxxxxxxxxx}\le \max_{-\fl{nt}\le k \le \fl{ns}} \big\{\log \eta_k + \log Z^{\square}_{\mathbf{k},(\fl{ns},\fl{nt})}\big\} + \log( n(s+t)). \label{UB}
\end{align}

For any $a \in [-t,s]$, 
\begin{align}
\lim_{n \rightarrow \infty}n^{-1}\log \bP\bigg\{ \sum_{i=1}^{\fl{ns}}& \log U_{i,\fl{nt}} \ge nr \bigg\} \notag \\ 
&\ge \lim_{n \rightarrow \infty}  n^{-1} \log \bP\big\{ \log \eta_{\fl{na}} + \log Z^{\square}_{\fl{n\mathbf{a}},(\fl{ns},\fl{nt})} \ge nr \big\} \notag \\
&\ge - H^{a}_{s,t}(r).\label{bo}
\end{align}
Equation \eqref{bo} is valid for all $a$, so optimizing over $a\in [-t,s]$ gives
\[
\lim_{n \rightarrow \infty}n^{-1}\log \bP\bigg\{ \sum_{i=1}^{\fl{ns}}\log U_{i,\fl{nt}} \ge nr \bigg\} \ge - \inf_{-t\le a \le s} \,\,\inf_{a_{  \kappa} \le x \le r- a_{ \rf}}\{  \kappa_a(x) +  \rf_{(s,t)-\mathbf{a}}(r-x)\}.
\]
This proves the upper bound in \eqref{Jdefi}.

The remaining of the proof is about the lower bound. Let $\e > 0$.  Fix a sufficiently small $\delta > 0$ and  let $-t = a_0 < a_1 < \dotsm < a_q=0 < \dotsm< a_m = s$  be a partition of the interval $-[t,s]$ so that $|a_{i+1}-a_i| < \delta$. For a given $\xi>0$ assume $\delta = \delta(\e,\xi)$ is sufficiently small so that 
$\delta\int_{\mu}^{\mu+ \xi}\Psi_0(x)\,dx < \e/4$. 

Without loss of generality assume $a_{i} \ge 0$.  For any integer $k \in [\fl{na_i}, \fl{na_{i+1}}]$, we can estimate

\begin{align}
\bP&\big\{ \log \eta_{k} + \log Z^{\square}_{\mathbf{k},(\fl{ns},\fl{nt})} \ge nr \big\}\notag\\
&\le\bP\bigg\{ \log \eta_{\fl{na_{i+1}}} + \log Z^{\square}_{\fl{n\mathbf{a}_i},(\fl{ns},\fl{nt})} -\sum_{i=k+1}^{\fl{na_{i+1}}}\log U_{i,0}  -\sum_{j=\fl{na_i}}^{k-1}\log  Y_{j,1} \ge nr  \bigg\} \label{sam} \\
&\le\bP\big\{ \log \eta_{\fl{na_{i+1}}} + \log Z^{\square}_{\fl{n\mathbf{a}_i},(\fl{ns},\fl{nt})} \ge n(r-\e)\big\} \notag \\
&\phantom{xxxxxxxxxxxxxxx}+ \bP\Bigg\{ -\sum_{i=k+1}^{\fl{na_{i+1}}}\log U_{i,0} -\sum_{j=\fl{na_i}}^{k-1} \log Y_{j,1} \ge n\e \bigg\} \notag \\
&\le\bP\big\{ \log \eta_{\fl{na_{i+1}}} + \log Z^{\square}_{\fl{n\mathbf{a}_i},(\fl{ns},\fl{nt})} \ge n(r-\e)\big\} \notag \\
&\phantom{xxxxxxxxxxxxxxx}+ e^{-\xi n\e}\big(\bE e^{\xi\log U^{-1}}\big)^{n(a_{i+1} -k-1)}\big(\bE e^{\xi\log Y^{-1}}\big)^{k-na_i-1} \notag\\
&\le\bP\big\{ \log \eta_{\fl{na_{i+1}}} + \log Z^{\square}_{\fl{n\mathbf{a}_i},(\fl{ns},\fl{nt})} \ge n(r-\e)\big\} + e^{-n\big(\xi \e -  2\delta\int_{\mu}^{\mu+ \xi}\Psi_0(x)\,dx\big)}\notag\\
&\le\bP\big\{ \log \eta_{\fl{na_{i+1}}} + \log Z^{\square}_{\fl{n\mathbf{a}_i},(\fl{ns},\fl{nt})} \ge n(r-\e)\big\} +  e^{-n\xi \e/2} \label{tiny}.
\end{align}

The key to the bound is the fact that the upper tail large deviation rate function for sums of  $\log Y^{-1}$ random variables has unbounded slope. This is not true for $\log Y$. This is why $\eta_k$ changes to $\eta_{\fl{na_{i+1}}}$ and  $ Z^{\square}_{\mathbf{k},(\fl{ns},\fl{nt})} $ to $ Z^{\square}_{\fl{n\mathbf{a}_i},(\fl{ns},\fl{nt})}$ in \eqref{sam}. For the case $a_i < 0$, the corresponding changes will be $\eta_k$ to $\eta_{\fl{na_i}}$ because the weights are reciprocals and  $ Z^{\square}_{\mathbf{k},(\fl{ns},\fl{nt})} $ to $ Z^{\square}_{\fl{n\mathbf{a}_i},(\fl{ns},\fl{nt})}$ as before.

Now for the actual error estimate. Assume $n$ is large enough so that $n\e  > \log(ns+nt)$. Equation \eqref{UB} implies
\begin{align}
n^{-1}&\log \bP\bigg\{ \sum_{i=1}^{\fl{ns}} \log U_{i,\fl{nt}} \ge nr \bigg\} \notag \\ 
& \le  n^{-1} \log \sum_{k=-\fl{nt}}^{\fl{ns}}\bP\big\{ \log \eta_{k} + \log Z^{\square}_{\mathbf{k},(\fl{ns},\fl{nt})} \ge nr - \log (ns+nt)\big\} \notag \\ 
&\le \max_{0\le i \le m-1}\,  \max_{ \fl{na_i} \le k \le \fl{na_{i+1}}} n^{-1} \log \bP\big\{ \log \eta_{k} + \log Z^{\square}_{\mathbf{k},(\fl{ns},\fl{nt})} \ge n(r - \e) \big\}\notag \\
&\phantom{xxxxxxxxxxxxxxxxxxxxxxxxxxxxxxxxxxxxxxxx} +n^{-1}\log(ns+nt) \notag\\
&\le \max_{0\le i \le m-1}n^{-1} \log \Big(\bP\big\{ \log \eta_{\fl{na_{i+1}}} + \log Z^{\square}_{\fl{n\mathbf{a}_i},(\fl{ns},\fl{nt})} \ge n(r-2\e)\big\} +  e^{-n\xi \e/2}\Big) + \e \label{UBSTEP}\\
&\le \max_{0\le i \le m-1}n^{-1} \log\bP\big\{ \log \eta_{\fl{na_{i+1}}} + \log Z^{\square}_{\fl{n\mathbf{a}_i},(\fl{ns},\fl{nt})} \ge n(r-2\e)\big\} \vee (-\xi \e/2)  + 2\e. \label{bp}
\end{align}
Equation \eqref{UBSTEP} follows from \eqref{tiny}. Take a limit $n \to \infty$ in equation \eqref{bp} to conclude 
\begin{align}
-R_s(r)&\le \max_{0\le i \le m-1}\{-H^{a_{i+1}, a_i}_{s,t}(r-2\e)\}\vee (-\xi \e/2)  + 2\e \label{better0}\\
&\le \max_{0\le i \le m-1}\{-H^{a_{i+1}, a_{i}}_{s,t}(r-2\e)\} + 2\e \label{better1}\\
&\le \max_{0\le i \le m-1}\{-H^{a_{i+1}, a_{i+1}}_{s,t}(r-2\e)\}+\e' + 2\e\label{better2}\\
&\le \max_{0\le i \le m-1}\{-H^{a_{i+1}, a_{i+1}}_{s,t}(r)\}+\e' \label{better3}\\
&\le \sup_{-t\le a \le s}\{-H^{a}_{s,t}(r)\}  +\e' \label{finalstep}.
\end{align}
Equation \eqref{better1} is the result of $\xi$ tending to infinity in \eqref{better0} and noting that $H^{a_{i+1}, a_i}_{s,t} < \infty$.

Equation \eqref{better2} requires explanation. Observe that for $r, s,t, \e$ fixed there exists a compact set $K$ that depends only the fixed parameters, that contains all intervals $K_{a,b} =[m_{\kappa, a}, r-2\e-m_{\rf,b}]$, for which the $x$ variable ranges over in definition \eqref{Hdef} for $H^{a, b}_{s,t}(r-2\e)$. This follows from the continuity of $m_{\kappa, a}$ and $m_{\rf, b}$ in $a, b$ respectively, and from the fact that $a,b$ range over compact sets. Note that by enlarging the compacts $K_{a,b}$ to $K$ we do not change the value of $H^{a,b}_{s,t}(r-2\e)$. 

For $x$ restricted in $K$, $\rf_{(s,t) - \mathbf b}(r-2\e +x)$ is uniformly continuous in $(b,x)$. Then, for $\e'>0$ we can assume the mesh of the partition is small enough so that for any fixed $x$
\[ -\e'\le \rf_{(s,t) - \mathbf a_i}(r-2\e +x) - \rf_{(s,t) - \mathbf a_{i+1}}(r-2\e +x) \le \e'.\]   

Hence, in \eqref{better2}, $\e'$ is the error that comes from changing the superscript in $H$ from $a_i$ to $a_{i+1}$. Equation \eqref{better3} comes from letting $\e \to 0$ and the continuity of $H$ in the $r$ variable that follows from arguments similar to those that justified \eqref{better2}.
Letting $\e'\to 0$ in \eqref{finalstep} gives the lemma. 
\end{proof}

We will also need the following lemma.
\begin{lemma}
For a fixed $\xi \in [0, \mu)$ the function 
\be
G_{\xi}(a) = \begin{cases}
                             - \rf_{(t,t)-\mathbf{a}}^*(\xi),&\textrm{for }   0\le a\le t\\
                             \infty, &\text{otherwise}
                            \end{cases} 
\ee
is  convex, lower semi-continuous  on $\bR$ and continuous on $[0,t]$. In particular, $G^{**}_{\xi}(a) = G_{\xi}(a)$
for $a\in \bR$.
\label{Gisconvex}
\end{lemma}

\begin{proof}  To show convexity on $[0,t]$ let $\lambda \in (0,1)$ and 
  $a = \lambda a_1+(1-\lambda)a_2$: 
\begin{align}
   - \rf_{(t,t)-\mathbf{a}}^*(\xi)  &= -\sup_{r\in \bR}\{\xi r -  \rf_{(t,t)-\mathbf{a}}(r)\}\notag \\ 
                                  &=\inf_{r\in \bR}\{ \rf_{t-a,t}(r)-\xi r\}\notag \\ 
                                  &\le \inf_{r\in \bR} \inf_{\substack{(r_1,r_2): \\ \lambda r_1 + (1-\lambda)r_2=r}}\{\lambda( \rf_{t-a_1,t}(r_1)-\xi r_1)+(1-\lambda) ( \rf_{t-a_2,t}(r_2)-\xi r_2)\} \notag\\
                                  &=  \inf_{\substack{(r_1,r_2)\in \bR^2}}\{\lambda( \rf_{t-a_1,t}(r_1)-\xi r_1)+(1-\lambda) ( \rf_{t-a_2,t}(r_2)-\xi r_2)\} \notag\\
                                  &=\lambda \inf_{r_1 \in \bR}\{ \rf_{t-a_1,t}(r_1)-\xi r_1\}+(1-\lambda) \inf_{r_2\in \bR}\{ \rf_{t-a_2,t}(r_2)-\xi r_2\} \notag\\
                                  &=-\lambda \rf_{t-a_1,t}^*(\xi)-(1-\lambda) \rf_{t-a_2,t}^*(\xi). \label{Gconv}
\end{align}
The inequality comes from the convexity of $ \rf$ in the variable $(t-a,t,r)$. 

For finiteness on $[0,t]$ it is now enough to show that $G_{\xi}(a)$ is finite at the endpoints. 
Continuity then  follows in the interior $(0,t)$. 
  First  take $a=t$.   Then $\rf_{0,t}^*$ is the dual of  a   Cram{\'e}r rate
function, and  for $\xi > 0$   
\be
G_{\xi}(t) = - \rf_{0,t}^*(\xi) = - t\log \bE e^{ \xi  \log Y_{1,0} } 
\label{va}
\ee
which is finite for $\xi<\mu$. 

Convexity of $ \rf_{s,t}(r)$  and symmetry $ \rf_{s,t}(r)= \rf_{t,s}(r)$ imply   $ \rf_{t,t}(r) \le  \rf_{0,2t}(r)$. From this  
\begin{align}
G_{\xi}(0) &= - \rf_{t,t}^*(\xi) = \inf_{r\in \bR}\{ \rf_{t,t}(r) - \xi r\} \notag\\
                      &\le \inf_{r\in\bR}\{  \rf_{0,2t}(r)-\xi r \} = -   \rf_{0,2t}^*(\xi) < \infty. 
\end{align}

\textit{Continuity at $a=0$ and $a=t$}.
Case $1$: $a=0$. To show that $G_{\xi}$ is also continuous at the endpoints, we first obtain a lower bound. For any $r \in \bR$,
\[
\rf^{*}_{t-a,t}(\xi) \ge r\xi - \rf_{t-a,t}(r)
\]
hence, by continuity of $\rf_{s,t}$ in the $(s,t)$ argument,
\be
\lim_{a\to 0}\rf^{*}_{t-a,t}(\xi) \ge r\xi - \rf_{t,t}(r).
\ee
Optimize over $r$ to conclude $\lim_{a\to 0}\rf^{*}_{t-a,t}(\xi) \ge \rf^{*}_{t,t}(\xi).$ 

For the upper bound, let $n\in \bN$. Then we estimate, using Lemma \ref{Varadhan's lemma}
\begin{align}
\rf^{*}_{t,t}(\xi)&= \lim_{n\to \infty}n^{-1}\log \bE e^{\xi \log Z_{\fl{nt},\fl{nt}}}\notag \\   
                            &\ge \lim_{n\to \infty}n^{-1}\log \bE e^{\xi \log Z_{\fl{n(t-a)},\fl{nt}}}+  \lim_{n\to \infty}n^{-1}\log \bE e^{\xi \sum_{i=\fl{n(t-a)}+1}^{\fl{nt}}\log Y_{i, \fl{nt}} }\notag \\   
                            &=\rf_{t-a,t}^*(\xi)+ a \log \bE Y ^{\xi}.   
\end{align}
Taking $a \to 0$ yields the result.

Case $2$: $a=t$. The lower bound follows as in the previous case. For the upper bound we use a path counting argument. Consider first the case where $0\le\xi<1$.
Then,  
\begin{align}
\rf_{t-a,t}^*(\xi) &= \lim_{n\to \infty}n^{-1}\log \bE\Big(\sum_{x \in \Pi(\fl{n(t-a)}, \fl{nt})} \prod_{i=1}^{\fl{nt}+\fl{n(t-a)}} Y\Big)^\xi\notag \\
                             &\le  \lim_{n\to \infty}n^{-1}\log\sum_{x \in \Pi(\fl{n(t-a)}, \fl{nt})} \prod_{i=1}^{\fl{nt}+\fl{n(t-a)}}\bE(Y)^\xi\notag \\  
&=  F(t-a,t) + (2-a/t)\rf_{0,t}^*(\xi)\notag  
\end{align}
For $1\le \xi <\mu$, Jensen's inequality yields
\be
\rf_{t-a,t}^*(\xi)\le   \xi F(t-a,t) +(2-a/t)\rf_{0,t}^*(\xi).
\notag
\ee
Let $a\to t$ to get the result.

The fact $G^{**}_{\xi} = G_{\xi}$ is now a direct consequence of lower semicontinuity, by\cite[Thm.~12.2]{rock-ca}
\end{proof}

\begin{proof}[Proof of Theorem \ref{up-rate}.]  We begin by expressing the 
explicitly known dual $R^{*}_s(\xi)$ from \eqref{J*} in terms of the unknown function
$\rf_{(s,t)-\mathbf{a} }$.  Recall that $a \in [-t,s]$ is the macroscopic exit point of the polymer chain from the boundary and 
$\avec$ is given by \eqref{exitvec2}.

Fix $0\le \xi <\mu$.
By \eqref{Hdef} and \eqref{convo} we can write
$
H^a_{s,t}(r) =  (  \kappa_{a}\square \rf_{(s,t)-\mathbf{a} })(r).
$
Then by  \eqref{stardef} and  \eqref{Jdefi} 
\begin{align}
R^{*}_s(\xi) 
                        & = \sup_{-t\le a\le s}\sup_{r}\big\{ r\xi -  (  \kappa_{a}\square \rf_{(s,t)-\mathbf{a} })(r) \big\} \notag \\
                        & = \sup_{-t\le a\le s}  (  \kappa_{a}\square \rf_{(s,t)-\mathbf{a} })^*(\xi) \notag \\
                        & = \sup_{-t\le a\le s} \big\{   \kappa_{a}^*(\xi)+  \rf_{(s,t)-\mathbf{a}}^*(\xi) \big\} \quad \text{by \eqref{addual}}. \label{stepp1}
\end{align} 
Equations \eqref{J*} and \eqref{stepp1} give
\be
 s\log \Gamma(\theta - \xi)-s\log \Gamma(\theta) =  \sup_{-t\le a\le s} \big\{   \kappa_{a}^*(\xi)+  \rf_{(s,t)-\mathbf{a} }^*(\xi) \big\},\quad  0 \le \xi < \theta.
\ee

Define 
\be
u(\theta) =  \begin{cases}
                                     -\hks{(\theta)}=\lmgf_{\mu-\theta}(-\xi)  =\log\Gamma(\mu-\theta + \xi) - \log \Gamma(\mu - \theta), &\quad -t\le a\le0 \vspace{0.1 in}\\
                                       d_{\xi}{(\theta)}=\lmgf_{\theta}(\xi)  = \log \Gamma(\theta - \xi) - \log \Gamma(\theta), &\quad 0< a\le s
                                       \end{cases}
\label{wow}
\ee
and substitute  \eqref{l*}, \eqref{wow} into equation \eqref{stepp1} to get
\be
 s\log \frac{\Gamma(\theta - \xi)}{ \Gamma(\theta)} -t\log\frac{\Gamma(\mu-\theta + \xi) }{ \Gamma(\mu - \theta)}=  \sup_{-t\le a\le s} \big\{ au(\theta)+ \rf_{(s,t)-\mathbf{a} }^*(\xi) \big\}
\label{almost}
\ee

We now specialize to the case $s=t$ and we treat $\theta$ as a variable in the remaining part of the proof. We assume that $\theta \in (\xi, \mu)$ and $\xi \ge 0$ is fixed. When $s=t$, symmetry of $\rf_{s,t}$ allows us to write \eqref{almost} as
\be
 t(\dks(\theta)+\hks(\theta))=  \sup_{0\le a\le t} \big\{ a \max\{\hks(\theta),\dks(\theta)\}+ \rf_{t-a,t}^*(\xi) \big\}.
\label{almost1}
\ee

Assume first that $(\mu+\xi)/2 \le \theta<\mu$. This is equivalent to $\hks(\theta) \ge \dks(\theta)$ and equation
\eqref{almost1} turns into  
\be
 t(\dks(\theta)+\hks(\theta))=  \sup_{0\le a\le t} \big\{ a \hks(\theta)+ \rf_{t-a,t}^*(\xi) \big\}.
\label{almost2}
\ee
Let $\hks(\theta)= v$ and $G_{\xi}(a) = - \rf_{t-a,t}^*(\xi)$. This notation makes \eqref{almost2}
\be
 t((d_{\xi}\circ \hks^{-1})(v)+ v)=  \sup_{0\le a\le t} \{ av- G_{\xi}(a) \} = G^*_{\xi}(v), \quad \hks(\tfrac{\mu + \xi}2)\le v < + \infty.
\label{almost3}
\ee

Now assume that $\xi < \theta \le (\mu +\xi)/2$. Then, equation \eqref{almost1} becomes 
\be
 t(\dks(\theta)+\hks(\theta))=  \sup_{0\le a\le t} \big\{ a \dks(\theta)+ \rf_{t-a,t}^*(\xi) \big\}.
\label{almost4}
\ee

Let $\psi_{\mu,\xi}:(\xi, (\mu +\xi)/2] \longrightarrow [ (\mu +\xi)/2, \mu),\,\, \theta\mapsto\psi_{\mu, \xi}(\theta) = \mu-\theta+\xi$ is a homeomorphism between the intervals $(\xi, (\mu +\xi)/2]$ and $[ (\mu +\xi)/2, \mu)$. It has the following properties: First, $\dks(\theta) = \hks(\mu - \theta + \xi) = \hks(\psi_{\mu,\xi}(\theta))$ and 
$\dks(\mu-\theta+\xi) = \dks(\psi_{\mu,\xi}(\theta)) = \hks(\theta)$, hence it fixes the sum 
\[ \hks(\theta) + \dks(\theta) = \hks(\psi_{\mu, \xi}(\theta)) + \dks(\psi_{\mu, \xi}(\theta)). \] 
Equation \eqref{almost4} can then be re-written as
\be
 t(\dks(\psi_{\mu,\xi}(\theta))+\hks(\psi_{\mu,\xi}(\theta)))=  \sup_{0\le a\le t} \big\{ a \hks(\psi_{\mu,\xi}(\theta))+ \rf_{t-a,t}^*(\xi) \big\},
\label{almost5}
\ee
where $\hks(\psi_{\mu,\xi}(\theta)) \in [ \hks(\tfrac{\mu + \xi}2),  \infty).$ This shows that equations \eqref{almost4} and \eqref{almost3} are equivalent, so we can restrict $\theta \in [\tfrac{\mu + \xi}2, \mu)$ without loss of generality. We will work only with equation \eqref{almost3} from now on. 

We compute
\begin{align}
 \rf_{(t,t)-\mathbf{a}}(r) &= \sup_{\xi \in[0,\mu)}\{r\xi- \rf_{t-a,t}^*(\xi)\} \quad \text{by \eqref{dcd} and Theorem \ref{phidef}},\notag \\
                         &= \sup_{\xi\in[0,\mu)}\{r\xi + G_{\xi}(a)\}\notag\\
                         &= \sup_{\xi\in[0,\mu)}\big\{r\xi + \sup_{v\in \bR}\{ av-  G^*_{\xi}(v)\}\big\}\quad \text{by \eqref{dcd} and Lemma \ref{Gisconvex}},\notag\\
                         &=\sup_{\xi\in[0,\mu)} \sup_{v\in \bR} \{r\xi + av -  G^*_{\xi}(v)\}\label{F-dual0}.
\end{align}
We now argue that $ \sup_{v\in \bR}\{ av- G^*_{\xi}(v)\}$ can be achieved when $\hks(\tfrac{\mu + \xi}2)\le v < + \infty$. 

Equation \eqref{almost3} gives the values of $G^*_{\xi}(v)$ for $v \in  [\hks(\tfrac{\mu + \xi}2), \infty)$ and we see that $G^{*}_{\xi}(v)$ is differentiable for $v \in (\hks(\tfrac{\mu + \xi}2), \infty)$ values. The derivative (using \eqref{almost3} and \eqref{wow}) is   
\be
\frac{d}{dv}G^{*}_{\xi}(v) = t\bigg(1 + \frac{\Psi_0(\hks^{-1}(v)-\xi) - \Psi_0(\hks^{-1}(v))}{\Psi_0(\mu +\xi - \hks^{-1}(v)) - \Psi_0(\mu - \hks^{-1}(v)) }\bigg).
\label{Gder}
\ee 

The right derivative of $G^*_{\xi}$ as $v\to \hks(\tfrac{\mu + \xi}2)$ is 
\begin{align}
\Big({\frac{d}{dv}G^{*}_{\xi}}\Big)_{+}(\hks(\tfrac{\mu + \xi}2)) 
&= t\bigg(1 + \lim_{\hks^{-1}(v)\searrow \tfrac{\mu+\xi}2}\frac{\Psi_0(\hks^{-1}(v)-\xi) - \Psi_0(\hks^{-1}(v))}{\Psi_0(\mu +\xi - \hks^{-1}(v)) - \Psi_0(\mu - \hks^{-1}(v)) }\bigg)\notag\\
&= t\bigg(1 +\frac{\Psi_0({\tfrac{\mu-\xi}2}) - \Psi_0( {\tfrac{\mu+\xi}2})}{\Psi_0( {\tfrac{\mu+\xi}2}) - \Psi_0( {\tfrac{\mu-\xi}2}) }\bigg)\notag\\
&=0.
\label{Gder2}
\end{align}
Recall that  $G^*_{\xi}(v)$ is a convex function of $v$ and note that its subdifferential set (see \cite{rock-ca}) is a subset of $[0,t]$. Since $a \ge 0$, this implies that  the supremum cannot be attained for $v < \hks(\tfrac{\mu + \xi}2)$. 

Therefore,
\be
\rf_{(t,t)-\mathbf{a}}(r) = \sup_{\xi\in[0,\mu)}\,\,\, \sup_{v\in [\hks((\mu + \xi)/2),\infty)} \{r\xi + av -  G^*_{\xi}(v)\}\label{F-dual}.
\ee

To compute $ \rf_{s,t}(r)$ we can assume without loss of generality that $s < t$ (otherwise observe that  $ \rf_{s,t}(r)=  \rf_{t,s}(r)$). Then let $a = t-s $, observe that $  \rf_{s,t}(r) =   \rf_{t-(t-s),t}(r)$ and use \eqref{F-dual}. 

\end{proof}
\begin{remark}
For $s=t=1$, the rate function has a unique $0$ at $r_0 = -2\Psi_0(\mu/2)$. Even though an explicit Taylor expansion is not easily computable, we can still show the first term in the expansion around $r_0$ has order $3/2$. To simplify the calculations that follow, assume that $\mu = 2$. 

Let $r = r_0 + \e$. Using \eqref{phiexact}, it is not hard to verify that the maximizing $v$ is $\hks(1+\xi/2)$. Then
\be
\rf_{1,1}(r) = \sup_{0<\xi< 2}\{ (r_0 +\e)\xi - 2(\log\Gamma(1 - \xi/2)) - \log\Gamma(1 + \xi/2))\}.
\label{fo}
\ee 
Take the $\xi$-derivative of the expression in the braces of \eqref{fo} to show (also by using \eqref{telehar}) that the maximizing $\xi$, $\xi_{max}$,  solves the equation
\be
\e = 2\gamma - \Psi_{0}(1-\xi_{max}/2)-\Psi_0(1+\xi_{max}/2) = \sum_{k=1}^{\infty}\bigg( -\frac{2}{k} + \frac{2}{2k -\xi_{max}}+\frac{2}{2k+\xi_{max}}  \bigg).
\label{seriessol}
\ee
All terms in \eqref{seriessol} are positive, so $\xi_{max}$ must be such so that the first term of the series satisfies
\be
 -2 + \frac{2}{2-\xi_{max}}+\frac{2}{2+\xi_{max}} <\e. 
\label{ineq}
\ee 
On the other hand, since $\xi_{max} < 1$, one can easily bound the sum in \eqref{seriessol} from above, to obtain
\be
\e < 4\xi^2_{max}\sum_{k=1}^{\infty}\frac{1}{2k(2k-1)(2k+1)}<  {4\xi^2_{max}}\int_{1}^{\infty}\frac{1}{(2x-1)^3}\,dx=\xi_{max}^2
\label{xilb}
\ee
Solve \eqref{ineq},\eqref{xilb} to obtain 
\be
\e^{1/2}<\xi_{max}< 2\e^{1/2}.
\label{xiub}
\ee 
We now bound \eqref{fo}in the following manner:
\begin{align}
\rf_{1,1}(r) &=r_0\xi_{max} +\e\xi_{max} - 2(\log\Gamma(1 - \xi_{max}/2)) - \log\Gamma(1 + \xi_{max}/2)) \notag \\
                   &= \sup_{\e^{1/2}<\xi<2\e^{1/2}} \Big\{r_0\xi +\e\xi +2\Psi_0(1)\xi+\frac{\Psi_2(1)}{12}\xi^3 + \mathcal{O}(\xi^5)\Big\},\notag          
\end{align}
since by \eqref{xiub}, we can Taylor expand the braces for small values of $\xi$ . Recall that $r_0 = -2\Psi_0(1)$ and that $\Psi_2$ is finite away from $0$, to conclude that there exist positive constants $C_1$, $C_2$ so that 
\be
C_1(r-r_0)^{3/2}+\mathcal{O}\big((r-r_0)^{5/2}\big)< \rf_{1,1}(r)<C_2(r-r_0)^{3/2}+\mathcal{O}\big((r-r_0)^{5/2}\big).
\label{convspeed}
\ee         
\end{remark}

\section{Exact free-endpoint rate function}

\begin{figure}
\begin{center}

\includegraphics[trim = 150 250 100 250,scale=0.8 ]{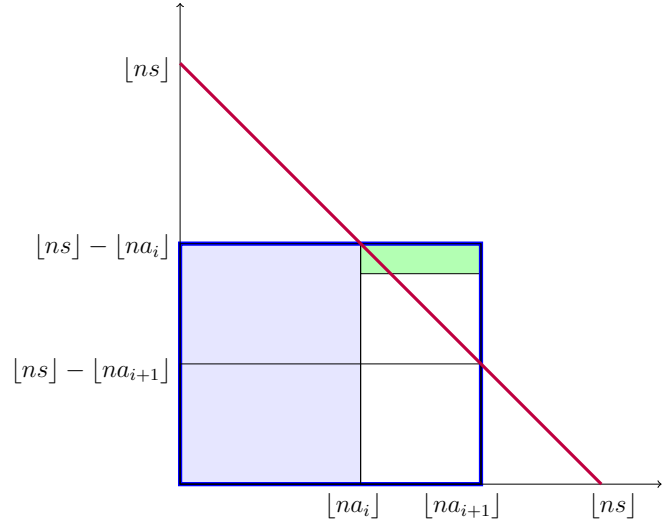}
\caption[Illustration of the bounds for Theorem \ref{free-rate}]{ The partition function in the thick-set square bounds from above the product of the partition functions of the shaded parts. 
This is used in the proof of Theorem \ref{free-rate}. 
}
\label{boundpart}
\end{center}
\end{figure}


We now turn our attention to the free endpoint case with no boundary. The conclusion of Theorem \ref{up-rate} suffices to prove Theorem \ref{free-rate}.

\begin{proof}[Proof of Theorem \ref{free-rate}.]
It is easy to check the following bounds for $0\le k \le \fl{ns}$:
\begin{align}
 \log\big(Z_{\fl{n \tfrac s2}, \fl{ns} - \fl{n\tfrac s2}}\big)& \le \log Z^{\dtot}_{\fl{ns}} \le\label{ublb1} \\
&\le \log (ns+1) +  \log\big(\max_k  Z_{k, \fl{ns} - k}\big).  
\label{ublb2}
\end{align}
To get the upper bound let $n\rightarrow \infty$ using \eqref{ublb1}.

We show the lower bound. Let $\e > 0$ and let $n$ large enough so that $n^{-1} \log( ns+1) <\e$.
\begin{align}
\bP\{ \log Z^{\dtot}_{\fl{ns}}\ge nr  \} &\le \bP\{ \log ns +  \log\big(\max_k  Z_{k, \fl{ns} - k}\big) \ge nr  \} \notag \\
                                                               &\le \bP\{ \max_k\,\, \log Z_{k, \fl{ns} - k} \ge n(r-\e)  \} \notag \\
                                                                &\le \sum_{k=0}^{\fl{ns}}\bP\{\log Z_{k, \fl{ns} - k} \ge n(r-\e)  \}. \label{fbub1}
\end{align}
After taking logarithms on both sides of \eqref{fbub1}, we have the upper bound 
\be
\log \bP\{ \log Z^{\dtot}_{\fl{ns}}\ge nrs  \} \le \max_k\,\, \log \bP\{\log Z_{k, \fl{ns} - k} \ge n(r-\e)  \} + \log( ns+1).
\ee
Now take $\e' > 0$ and a partition $0= a_1 < a_2<  ... < a_m = s$ of the interval $[0,s]$ with small enough mesh so that for $r$ fixed, 
$|\rf_{a_i, s- a_i}(r) -\rf_{a_{i+1}, s- a_i}(r)|<\e  $ .
We conclude that for $n$ large enough \eqref{ublb2} implies
\begin{align}
n^{-1}&\log\bP\{ \log Z^{\dtot}_{\fl{ns}}\ge nr  \} \le \notag \\ 
                      &\le \max_{1\le i \le m}n^{-1} \log \bP\{\log Z_{\fl{na_i}, \fl{ns} - \fl{na_i}} \ge n(r-\e)  \} + \e \notag\\
 &\le \max_{1\le i \le m}n^{-1} \log \bP\Big\{\log Z_{\fl{na_{i+1}}, \fl{ns} - \fl{na_i}} - \sum_{i=\fl{na_i}+1}^{\fl{na_{i+1}}}\log Y_{i, \fl{ns}-\fl{na_i}} \ge n(r-\e)  \Big\} + \e.
\label{b4l}
\end{align}
The same arguments as in \eqref{UBSTEP}-\eqref{better2} turn \eqref{b4l} into
\begin{align}
\lim_{n\to \infty}n^{-1}\log \bP\{ \log Z^{\dtot}_{\fl{ns}}\ge nr  \}  &\le -\min_{1\le i\le m-1}\rf_{a_{i+1}, s-a_i }(r- 2\e)+ \e\notag\\
                      &\le -\min_{1\le i\le m}\rf_{a_i, s-a_i }(r)+ \e'
\label{lnow}\\
&\le -\inf_{0\le a\le s}\rf_{a, s-a }(r) + \e'. \label{woot}
\end{align}  
Equation \eqref{lnow} is a result of letting $\e \to 0$ and after that adjusting the index of the rate function. Finally, let $\e' \rightarrow 0$, to conclude
\be
-\lim_{n\rightarrow \infty}n^{-1}\log \bP\{ \log Z^{\dtot}_{\fl{ns}}\ge nr  \} \ge \inf_{0\le a\le s} \rf_{a,s-a}(r).
\label{flb}
\ee
By Theorem \ref{phidef}, we have convexity of $ \rf_{s,t}(r)$ in the $(s,t)$ argument. This gives
\be
 \rf_{s/2,s/2}(r)\le \tfrac 12  \rf_{a, s-a}(r)+ \tfrac 12  \rf_{s-a, a}(r) =  \rf_{a, s-a}(r),
\ee
where the last equality follows from the symmetry relation $ \rf_{s,t}(r) =  \rf_{t,s}(r)$. This concludes the proof.
\end{proof}



\chapter{Multiphase TASEP-Introduction and Results}\label{introtasep}

\section{Introduction}
The last two chapters  study hydrodynamic limits of totally asymmetric simple
exclusion processes (TASEPs)
 with spatially inhomogeneous jump rates given by functions 
that are allowed to have discontinuities.  We
prove  a general hydrodynamic limit
and compute some explicit solutions, even though information
about invariant distributions is not
available.  The results come through a variational formula that 
 takes 
advantage of the known behavior of the homogeneous TASEP.  This way we
are able to get explicit formulas, even though the usual scenario in
hydrodynamic limits is that explicit equations and solutions require 
explicit computations of expectations under invariant distributions.  
Together with explicit hydrodynamic profiles we can present 
 explicit limit shapes 
for the related last-passage growth models with spatially inhomogeneous 
rates.  

The class of particle  processes we consider are defined by a positive speed function
$c(x)$ defined for $x\in\bR$, lower semicontinuous and assumed to have a
discrete set of discontinuities.  
 Particles reside at sites of $\bZ$, subject 
to the exclusion rule that admits at most one particle at 
each site. 
The dynamical rule is that a particle
jumps from site $i$ to site $i+1$ at rate $c(i/n)$ provided site 
$i+1$ is vacant. Space and time are both scaled by the factor 
$n$ and then we let $n\to\infty$.  We prove  the almost sure vague
convergence of the empirical measure   to a density $\r(x,t)$, assuming that
the initial particle configurations have a well-defined macroscopic density profile
$\r_0$. 

From known behavior of driven conservative particle
systems a natural expectation would be that the 
macroscopic density $\r(x,t)$  of this discontinuous  TASEP ought to
be, in some sense, the unique entropy solution
of an initial value problem  of the type 
\be \r_t + (c(x)f(\r))_x = 0, \quad \r(x,0)= \r_0(x). \label{discpde}\ee
 Our proof of the hydrodynamic limit does
not lead directly to this  scalar conservation law.  
We can make the
connection through some recent PDE theory  in the special case of the two-phase flow
where the speed function is piecewise constant with a single discontinuity.   
In this case   the discontinuous TASEP chooses the unique entropy solution.  
We would naturally expect TASEP to choose the correct entropy solution in general,
but we have not investigated the PDE side of things further   to justify   such a 
  claim.  
 
The remainder of this introduction reviews briefly some relevant literature and then
closes with an overview of the contents of these last two chapters.  The model and the results are
presented in Section \ref{results}. 

\paragraph{Discontinuous scalar conservation laws.}
The study of  scalar conservation laws  \be \r_t + F(x, \r)_x = 0\label{disc2}\ee
 whose
flux $F$ may admit discontinuities in $x$
has taken off in the last decade.  
As with the multiple weak solutions of even the simplest spatially homogeneous case,
a key issue is the identification of the unique physically relevant solution 
by means of a suitable {\sl entropy condition}.  (See Sect.~3.4 of \cite{evan} for 
textbook theory.)  
Several different entropy 
conditions for the discontinuous case have been proposed, motivated 
by  particular physical problems.  See for example  
\cite{adim-gowd-03, MR2356208, audu-pert-05, chen-even-klin-08, Diehl, Klin-Ris, ostr-02}.  
Adimurthi, Mishra and Gowda 
\cite{MR2356208} discuss   how different theories  
 lead to different choices of relevant solution.  
An interesting phenomenon is that limits of vanishing higher order 
effects can  lead to distinct choices (such as vanishing viscosity vs.\ vanishing
capillarity).   
 
However, the model we study does not offer  more than one choice. 
 In our case the graphs of the different fluxes do not intersect as they are
all  multiples  of  $f(\r)=\r(1-\r)$.  In such  cases   it is expected that all 
the entropy
criteria single out the same solution (Remark 4.4 on p.~811 of
\cite{adim-etal-05}).  By  appeal to the theory developed by 
Adimurthi and Gowda \cite{adim-gowd-03}   we show that the 
discontinuous TASEP chooses entropy solutions of   equation \eqref{discpde}
in the case where $c(x)$ takes two values separated by a 
single discontinuity   

Our approach to the hydrodynamic limit goes via the interface process
whose limit is a Hamilton-Jacobi equation.   
Hamilton-Jacobi equations with discontinuous spatial dependence 
have been studied by 
Ostrov  \cite{ostr-02}  via mollification. 

\paragraph{Hydrodynamic limits for spatially inhomogeneous, driven  conservative  particle 
systems.}   Hydrodynamic limits for the case where the speed function possesses
some degree of  smoothness 
  were proved over a decade ago by Covert and Rezakhanlou 
 \cite{cove-reza} and   Bahadoran 
  \cite{Bahadoran-98}. For the case where the speed function is continuous, a hydrodynamic limit was proven by Rezakhanlou
in \cite{Reza-2002}  by the method of \cite{sepp99K}.   
  
The most relevant and interesting predecessor to our work is the study of   
  Chen et al.\ \cite{chen-even-klin-08}. They combine an existence proof of
  entropy solutions for \eqref{disc2} under certain technical hypotheses on $F$ 
   with a hydrodynamic limit 
for an attractive zero-range process (ZRP)   with discontinuous 
speed function.   The hydrodynamic limit is proved through a compactness argument for
approximate solutions that utilizes measure-valued solutions.  
The approach follows   \cite{Bahadoran-98, cove-reza} by
 establishing a microscopic entropy inequality which under the limit turns into a
macroscopic entropy inequality.  

The scope of \cite{chen-even-klin-08} and  our work are significantly different.  
Our flux $F(x,\r)=c(x)\r(1-\r)$ does not satisfy the hypotheses of \cite{chen-even-klin-08}.  
 Even with spatial inhomogeneities, a ZRP has product-form invariant distributions that can be readily written down
and computed with.    This is a key distinction in comparison with exclusion processes. 
The microscopic entropy inequality in \cite{chen-even-klin-08}
 is derived by a coupling with a stationary process.  

 Finally, let us emphasize the distinction between the present work and some
 other hydrodynamic limits that feature spatial inhomogeneities.   
Random rates (as for example  in \cite{sepp99K})   
  lead to 
homogenization (averaging) and   the macroscopic   flux does not depend 
on the spatial variable.   Somewhat similar but still fundamentally different is TASEP
with a slow bond.  In this model jumps across bond $(0,1)$ occur at rate $c<1$
while all other jump rates are $1$.  The deep question is whether the slow bond
disturbs the hydrodynamic   profile for all $c<1$.  V.~Beffara, V.~Sidoravicius and M.~E.~Vares 
have announced a resolution of this question in the affirmative.  
Then the hydrodynamic
limit can be   derived  in the same way as in the main theorem here.  The solution is not entirely explicit, however: one unknown constant
  remains that quantifies the effect of the slow bond
(see    \cite{sepp01slow}).     \cite{baha-04-aop}   generalizes the hydrodynamic limit of  \cite{sepp01slow}  
  to a broad class of driven particle systems with a microscopic blockage.  

\paragraph{Organization.}   Section \ref{results} 
contains the main results for the inhomogeneous corner growth model and TASEP.  
 Sections \ref{CGM} and  
\ref{hydrolimit}   prove the   limits.   Section 
\ref{Density}  outlines  the explicit computation  of 
  density profiles for the two-phase TASEP.   
Section \ref{pdes}  discusses the  connection with PDE theory.  

\paragraph{Notational conventions.}  
The Exp($c$) distribution has 
density $f(x)=ce^{-cx}$ for $0<x<\infty$. 
Two last passage time models appear:  the   corner growth model 
whose last-passage times are denoted by $G$,  
and the equivalent wedge growth model with last-passage times $T$.  
 $H(x) = \mathbf{1}_{[0,\infty)}(x)$ is the Heavyside function. 
 $C$ is a constant that may change from line to line.

\section{Results}

\label{results}
The corner growth model connected with TASEP has been a central object of study
in this area since the seminal 1981 paper of Rost \cite{rost}.  So let us begin 
 with an explicit description of the limit shape for a
  two-phase  corner growth model with a boundary along the diagonal.   
Put  independent 
exponential random variables $ \{ Y_v\}_{v\in\bN^2}$ on the points of the lattice 
with distributions 
\be
Y_{(i,j)}\sim\left\{
\begin{array}{lll}

\vspace{0.1 in}
\textrm{Exp}(c_1), & \textrm {if } i < j \\

\vspace{0.1 in}
\textrm{Exp}(c_1\wedge c_2), & \textrm {if } i=j \\

\textrm{Exp}(c_2), &\textrm {if } i > j.
\end {array}
\right.
\ee 
We  assume that the rates satisfy $c_1 \geq c_2.$  

Define the last passage time 
\be G(m,n) = \max_{\pi \in \Pi(m,n)}\sum_{v\in \pi}Y_v, \quad (m,n)\in \bN^2, \label{basicref}\ee 
where $\Pi(m,n)$ is the collection of  weakly increasing nearest-neighbor
 paths in the rectangle $[m]\times[n]$ that start from $(1,1)$ and go up to $(m,n).$
 That is, elements of $\Pi(m,n)$ are sequences 
$\{(1,1)=v_1, v_2, \dotsc, v_{m+n-1}=(m,n)\}$ such that 
$v_{i+1}-v_i=(1,0)$ or $(0,1)$.   
 
\begin{theorem}
\label{two-phaselastpassagetime}
Let the rates $c_1 \geq c_2>0$.  Define  $c =  {c_1}/{c_2}\geq 1$
 and $b  = 2c-1-2\sqrt{c(c-1)}.$ Then the a.s.\ limit 
\[ \Phi (x,y) =\dlim n^{-1}G (\fl{nx},\fl{ny})\] exists
for  all $(x,y)\in(0,\infty)^2$  and is given by
\[
\Phi (x,y) = 
 \begin{cases} c_1^{-1}\left( \sqrt{x}+\sqrt{y}\right)^2 , &\text{if } 0 < x \leq b^2y \\[11pt]
x\dfrac{4c-(1+b)^2}{c_1(1-b^2)}   +y\dfrac{(1+b)^2-4cb^2}{c_1(1-b^2)} , 
&\text{if } b^2y< x < y \\[11pt]
c_2^{-1}\left( \sqrt{x}+\sqrt{y}\right)^2 , & \text{if } y \leq x < +\infty. 
\end{cases}
 \] 
\end{theorem} 

This theorem will be obtained as a side result of the development in
Section \ref{CGM}.

\begin{figure}
\begin{center}
\includegraphics{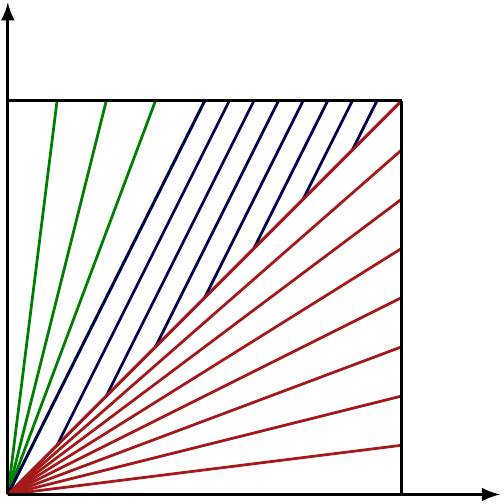}
\caption[Optimal two-phase macroscopic paths]{Optimal macroscopic paths that give the last 
passage time constant described in Theorem 
\ref{two-phaselastpassagetime}.} 
\label{Optimal paths}
\end{center}
\end{figure}

We turn to the general hydrodynamic limit.  The variational description needs the
following ingredients.  Define the wedge 
 $$\cW = \{ (x,y)\in \bR^2: y\geq0, \, x\geq -y \}$$ 
 and on $\cW$  the last-passage  function of homogeneous TASEP by 
  \be\gamma(x,y) =(\sqrt{x+y} +\sqrt{y})^2  .\label{gammanot}\ee 
Let $\textbf{x}(s) = (x_1(s), x_2(s))$ denote a path in $\bR^2$ and set 
\begin{align}
\cH(x,y) =\{ \textbf{x}\in C( [0,1], \cW) &: \textbf{x} \text{ is   piecewise }C^1, 
\textbf{x}(0)=(0,0),\notag \\ 
             &\textbf{x}(1)=(x,y), \, \textbf{x}'(s)\in \cW \textrm{ wherever the derivative is defined} \}. \notag
\end{align}  

The  {\sl speed function} $c$ of our system is by assumption 
  a positive lower semicontinuous function on $\bR$. We assume that at each $x\in \bR$ 
\be 
c(x) = \min\Big\{ \lim_{y \nearrow x}c(y),  \lim_{y \searrow x}c(y)  \Big\}.
\label{llrl}
\ee  In particular we assume that the limits in  \eqref{llrl}  exist. We also assume that $c(x)$  has only 
   finitely many discontinuities in any compact set, hence  it
  is bounded away 
  from $0$ in any compact set.   
  
For the hydrodynamic limit   consider
 a sequence of exclusion processes $\eta^n = (\eta^n_i(t): i \in \bZ, \, t\in\bR_+)$ 
  indexed by $n \in \bN$.
These processes
are constructed on a common probability 
space that supports the initial configurations $\{\eta^n(0)\}$ 
and the Poisson clocks of each process.  As always, the clocks of
process $\eta^n$ are independent of its initial state $\eta^n(0)$.
The joint distributions across the index $n$ are immaterial,
except for the assumed initial law of large numbers \eqref{weaklaw}
below.       In the process $\eta^n$ 
a particle at site $i$ attempts  
 a  jump  to $i+1$ with rate $c(i/n)$. Thus the generator of   $\eta^n$ is 
\be
L_nf(\eta) = \sum_{x\in \bZ} c(xn^{-1})\eta(x)(1-\eta(x+1))(f(\eta^{x,x+1})-f(\eta))  
\ee 
for  cylinder functions $f$ on the state space $\{0,1\}^\bZ$.  The usual notation is 
that particle configurations are denoted by $\eta=(\eta(i):i\in\bZ)\in\{0,1\}^\bZ$ and 
\be \eta^{x,x+1}(i) =
\begin{cases}
0 & \text{when } i=x\\
1 & \text{when } i=x+1 \\
\eta(i) &\text{when } i\neq x,x+1 
\end{cases}
\notag
\ee
is the configuration that results from moving a particle from $x$ to $x+1$. 
Let $J_i^n(t)$ denote the number of particles that have made the jump from site $i$ to site $i+1$ in  time interval $[0,t]$ in the process
 $\eta^n.$

An initial macroscopic profile $\rho_0$ is a   measurable function on $\bR$ such that $0\leq \rho_0(x)\leq 1$ for all real $x$, with antiderivative $v_0$ satisfying
\be
v_0(0)=0, \quad v_0(b) - v_0(a) = \int_{a}^{b}\rho_0(x)\,dx.
\label{vdef}
\ee

The macroscopic flux function of the constant rate 1 TASEP  is   \be
f(\r) =\left\{
\begin{array}{ll}

\vspace{0.1 in}
\r(1-\r), & \textrm {if } 0 \leq \r \leq 1 \\ 

-\infty, & \textrm{otherwise. }
\end {array}
\right.
\label{fnot}
\ee 
Its Legendre conjugate   $$f^*(y) = \inf_{0\leq\rho\leq1}\{y\r - f(\r)\}$$ 
represents the limit shape in the wedge. 
We orient our model so that
 growth in the wedge proceeds upward, and so 
we use  $g(y) = -f^*(y)$. It is explicitly given by
\be
g(y) = \sup_{0\leq\rho\leq1}\{f(\rho) - y\rho\} = \left\{
\begin{array}{lll}

\vspace{0.1 in}
-y, & \textrm {if } y \leq -1  \\

\vspace{0.1 in}
\frac1{4}{(1-y)^2}, & \textrm {if } -1 \leq y \leq 1 \\ 

0, & \textrm{if } y \geq 1.
\end {array}
\right.
\ee

For $x \in \bR$  define $v(x,0) = v_0(x)$, and for $t>0,$
\be
v(x,t)=\sup_{w(\cdot)}\left\{  v_0(w(0)) - \int_{0}^{t}c(w(s))\,g\left( \dfrac{w'(s)}{c(w(s))} \right)\,ds \right\}
\label{velocityversion1}
\ee 
where the supremum is taken over continuous piecewise $C^1$ paths $w:[0,t]\longrightarrow \bR$ that satisfy $w(t)=x.$ 
The function $v(x,t)$ is Lipschitz continuous jointly in $(x,t).$
 (see Section \ref{hydrolimit}) and it has a derivative almost everywhere. The macroscopic density is defined by  $\r(x,t) = v_x(x,t).$

The initial distributions of the processes
$\eta^n$  are arbitrary subject to the condition that the following strong law of large numbers holds
at time $t=0$: 
for all real $a<b$
\be
\dlim \frac{1}{n}\sum_{i=\fl{na}+1}^{\fl{nb}}\eta^n_i(0)=\int_a^b\rho_0(x)\,dx \quad \text{a.s.}  
\label{weaklaw}
\ee  

The second theorem gives the hydrodynamic limit of current and particle density for TASEP with discontinuous jump rates.

\begin{theorem}
Let $c(x)$ be a lower semicontinuous positive function satisfying \eqref{llrl}, with finitely many discontinuities in any compact set. Under assumption \eqref{weaklaw}, these strong laws of large numbers hold at each   $t>0$: for all real numbers $a<b$
\be
\dlim n^{-1}J^n_{\fl{na}}(nt) = v_0(a)-v(a,t)\quad a.s.
\label{macrocurrent}
\ee 
and 
\be
\dlim \frac{1}{n}\sum_{i=\fl{na}+1}^{\fl{nb}}\eta^n_i(nt)=\int_a^b\rho(x,t)\,dx \quad a.s.  
\label{macrospeed}
\ee 
where $v(x,t)$ is defined by \eqref{velocityversion1} and $\rho(x,t)=v_x(x,t).$
\label{hydro}
\end{theorem}

\begin{remark} In a totally asymmetric
$K$-exclusion with speed function $c$ the state space would be $\{0,1,\dotsc,K\}^\bZ$
with $K$ particles   allowed at each site, and  one particle   moved from site $x$
to $x+1$ at rate $c(x/n)$ whenever such a move can be legitimately completed.  
 Theorem \ref{hydro} can   be proved for this process with the same method of
 proof. The definition of the limit  \eqref{velocityversion1} would be the same, 
 except that   the explicit  flux $f$ and wedge shape $g$ would be replaced by the
 unknown functions $f$ and $g$ whose existence was proved in \cite{sepp99K}. 
\end{remark}

 To illustrate Theorem \ref{hydro} we compute the macroscopic 
density profiles $\r(x,t)$  
  from constant   initial conditions
in the  two-phase  model with
speed function 
\be     c(x) = c_1(1-H(x))+
c_2H(x) \label{defca}\ee 
where $H(x) = \mathbf{1}_{[0,\infty)}(x)$ is the Heavyside function
and  $c_1 \geq c_2$.  (The case $c_1<c_2$ can then be deduced from 
 particle-hole duality.)  The particles hit 
the region of lower speed as they pass the origin from left to right.  Depending on the
initial density $\rho$, we see the system adjust to this 
discontinuity  in  different ways to match the actual throughput
of particles on either side of the origin.   The maximal
flux on the right is $c_2/4$ which is realized on the left at 
densities $\r^*$ and $1-\r^*$ with 
\[\r^* = \tfrac 12 - \tfrac 12 \sqrt{1-c_2/c_1}.\]
 
\begin{corollary} Let $c_1 \geq c_2$ and
the speed function as in \eqref{defca}.   Then the macroscopic density profiles with initial conditions $\r_0(x,0)=\r$ are given as follows. 

{\rm (i)} Suppose $0< \r < \r^*$. Define $r^*=r^*(\r)= \tfrac 12 - \tfrac 12 \sqrt{1-4\r(1-\r)c_1/c_2} $.  Then 
\be
\r(x,t) = 
\left\{
\begin{array}{llll}

\vspace{0.1 in}
\r & \textrm {if } -\infty \leq x \leq 0\\

\vspace{0.1 in}
r^*  &\textrm{if } 0\leq x \leq c_2(1-2r^*)t \\

\vspace{0.1 in}
\dfrac{1}{2}\left(1- \dfrac{x}{tc_2}\right) &\textrm{if } c_2(1-2r^*)t\leq x \leq c_2(1-2\r)t\\

\r &\textrm{if } (1-2\r)tc_2\leq x < +\infty
\end{array}
\right.
\ee

\begin{figure}
\includegraphics{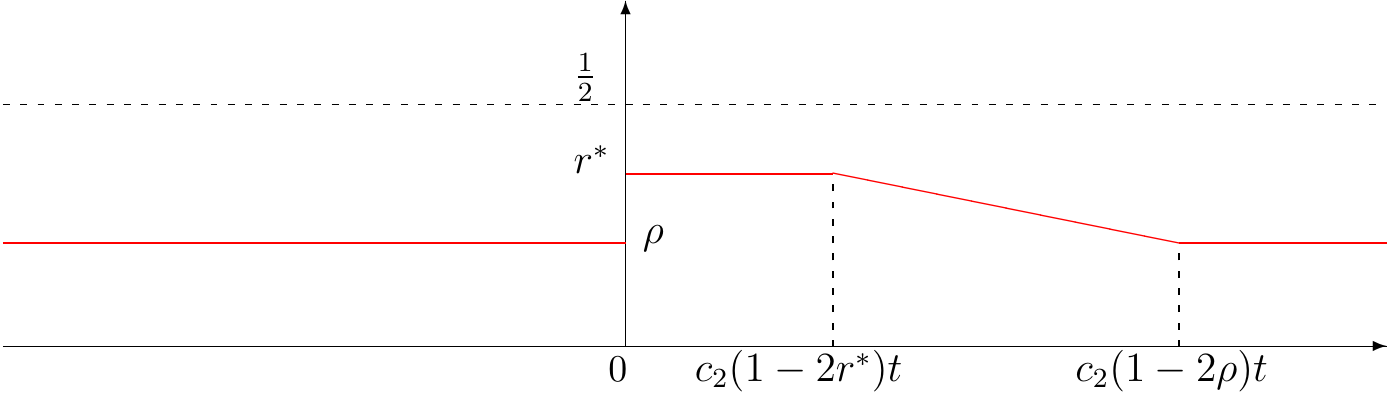}
\caption[Macroscopic two-phase density profile (Case 1) ]{Density profile $\r(x,t)$ in the two-phase ($c_1 > c_2$) TASEP when we start from constant initial configurations $\r_0(x)\equiv \r \in (0,\r^*).$}
\label{fig1}
\end{figure}

{\rm (ii)} Suppose $\r^* \leq \r\leq \frac{1}{2}. $ Then 
\be
\r(x,t) = 
\left\{
\begin{array}{llll}

\vspace{0.1 in}
\r & \textrm {if } -\infty \leq x \leq -tc_1(\r-\r^*) \\

\vspace{0.1 in}
1-\r^*  &\textrm{if } -tc_1(\r-\r^*)\leq x \leq 0 \\

\vspace{0.1 in}
\dfrac{1}{2}\left(1-\dfrac{x}{tc_2}\right) &\textrm{if } 0 \leq x \leq (1-2\r)tc_2\\

\r &\textrm{if } (1-2\r)tc_2\leq x < +\infty
\end{array}
\right.
\ee

\begin{figure}
\includegraphics{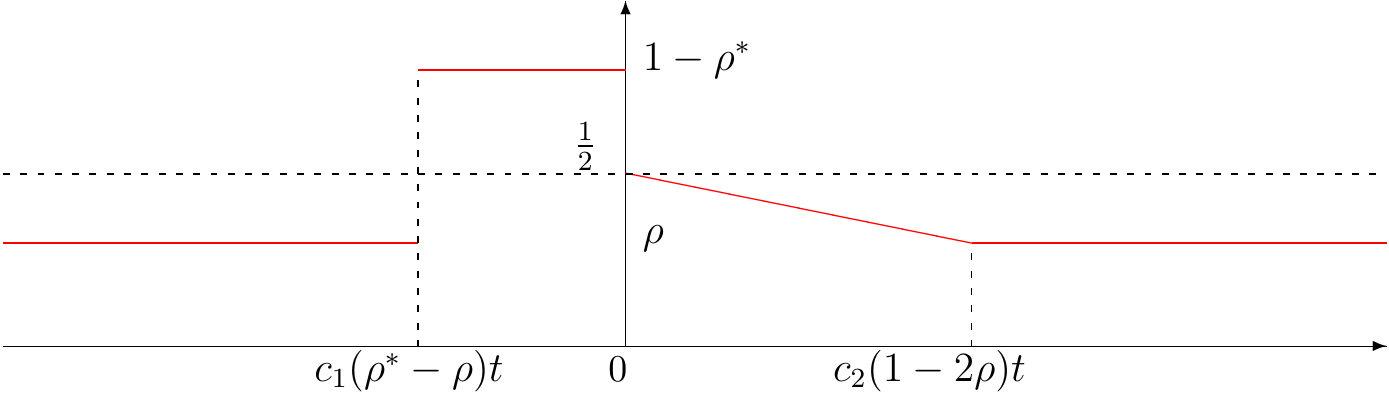}
\caption[Macroscopic two-phase density profile (Case 2) ]{Density profile $\r(x,t)$ in the two-phase ($c_1 > c_2$) TASEP when we start from constant initial configurations $\r_0(x)\equiv \r \in [\r^*, \tfrac 12].$}
\label{fig2}
\end{figure}

{\rm (iii)} Suppose $ \r\ge \frac{1}{2}.$  Define $r^*=r^*(\r)= \tfrac 12 - \tfrac 12 \sqrt{1-4\r(1-\r)c_2/c_1} $.
Then
\be
\r(x,t) = 
\left\{
\begin{array}{lll}

\vspace{0.1 in}
\r & \textrm {if } -\infty \leq x \leq -tc_1(\r-r^*) \\

\vspace{0.1 in}
1-r^*  &\textrm{if } -tc_1(\r-r^*)\leq x \leq 0 \\

\r &\textrm{if } 0<x <+\infty
\end{array}
\right.
\ee
\label{densityprofiles}

\begin{figure}
\includegraphics{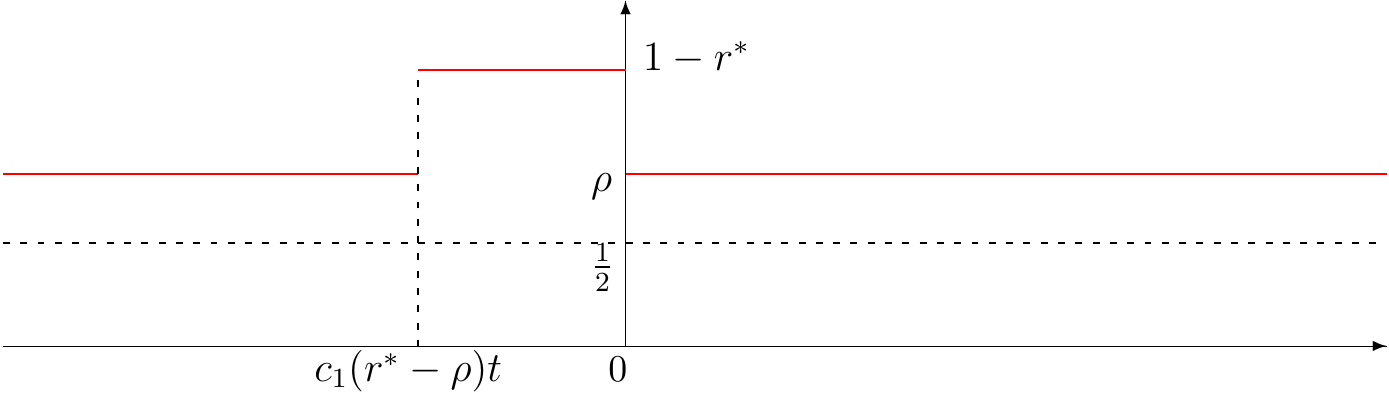}
\caption[Macroscopic two-phase density profile (Case 3)]{Density profile $\r(x,t)$ in the two-phase ($c_1 > c_2$) TASEP when we start from constant initial configurations $\r_0(x)\equiv \r \in (\tfrac 12,1).$} 
\label{fig3}
\end{figure}
\end{corollary} 

\begin{remark}
Taking $t\to\infty$ in the three cases of Corollary 
\ref{densityprofiles} gives a family of macroscopic profiles that are
fixed by the time evolution.  A natural question to investigate would be the
existence and uniqueness of invariant distributions that correspond
to these macroscopic profiles.  
\end{remark}

Next we relate the density profiles picked by the discontinuous TASEP
to entropy conditions for  scalar conservation laws with discontinuous 
fluxes.
The entropy conditions 
defined by Adimurthi and Gowda \cite{adim-gowd-03} are particularly suited to
our needs. Their results give   
uniqueness of the solution for the scalar conservation law 
\be
\left\{
\begin{array}{ll}

\vspace{0.1 in}
\r_t+\left( F(x,\r) \right)_x = 0, &  x\in \bR, t > 0  \\

\r(x,0) = \r_0(x), &  x \in  \bR
\end {array}
\right.
\label{scl}
\ee with distinct fluxes on the half-lines:
\be
F(x,\r) = H(x)f_r(\r)+(1-H(x))f_{\ell}(\r)  \label{Disflux}
\ee 
where  $f_r,f_{\ell} \in C^{1}(\bR)$ are strictly concave with superlinear decay to $-\infty$
as $\abs{x}\to\infty$.
A solution of \eqref{scl}   means a weak solution, that is,  
$\r \in L^{\infty}_{\text{loc}}(\bR\times\bR_+)$ such that for all continuously differentiable, compactly supported test functions $\phi \in C_c^1\left(\bR\times\bR_+\right)$, 
\be
\int_{-\infty}^{+\infty}\int_{0}^{+\infty}\left( \r\frac{\partial\phi}{\partial t}+F(x,\r)\frac{\partial\phi}{\partial x} \right)\,dt\,dx + \int_{-\infty}^{+\infty}\r(x,0)\phi(x,0)\,dx=0.
\label{weakformulation}
\ee  
\eqref{weakformulation} 
is the weak formulation of the problem
\be
 \begin{cases} 
 \r_t+f_r(\r)_x =0, & \textrm {for } x > 0, t>0  \\[5pt]
 \r_t+f_{\ell}(\r)_x =0, & \textrm{for } x <0, t>0 \\[5pt]
f_r(\r(0+,t)) = f_{\ell}(\r(0-,t))   &\text{for a.e.\ $t>0$} \\[5pt] 
\r(x,0)=\r_0(x). 
\end{cases}
 \label{Breakscl}
\ee 
 
The   entropy conditions used in   \cite{adim-gowd-03} come in two sets and assume
the existence of certain one-sided limits: 

\medskip

\textsl{$(E_i)$ Interior entropy condition, or Lax-Oleinik entropy condition:}  
\be
  \r(x+,t) \geq  \r(x-,t) \quad \text{for $x\ne0$ and for all $t>0$.}  
\label{Ei}
\ee  

\textsl{$(E_b)$  Boundary entropy condition at $x=0$:}
for almost every $t$,    the limits $\r(0\pm ,t)$ exist  and  one of the following   holds: 
\be
f_r'(\r(0+,t))\geq 0 \quad\textrm{and}\quad  f_{\ell}'(\r(0-,t))\geq 0,
\label{Eb1}
\ee 
\be
f_r'(\r(0+,t))\leq 0 \quad\textrm{and}\quad  f_{\ell}'(\r(0-,t))\leq 0,
\label{Eb2}
\ee 
\be
f_r'(\r(0+,t))\leq 0 \quad\textrm{and}\quad  f_{\ell}'(\r(0-,t))\geq 0.
\label{Eb3}
\ee 

\smallskip

Define 
\[G_x(p)= \mathbf{1}\{x> 0\}f_r^*(p)  +\mathbf{1}\{x< 0\} f_{\ell}^*(p) + 
\mathbf 1\{x=0 \} \min\bigl( f^*_r(0), f^*_{\ell}(0)\bigr),\]
 where $f_r^*$ and  $f_{\ell}^*$ are the convex duals of $f_r$ and $f_{\ell}$.
Set $V_0(x) = \int_0^x \r_0(\theta)\,d\theta$ and define
\be
V(x,t) = \sup_{w(\cdot)}\left\{ V_0(w(0)) + \int_0^t G_{w(s)}\bigl( w'(s) \bigr)\,ds \right\}
\label{defV} \ee
where the supremum is   over continuous, piecewise linear paths $w: [0,t] \longrightarrow \bR$ with $w(t) = x.$  
 
 \begin{theorem} \cite{adim-gowd-03}
Let $\r_0\in L^{\infty}(\bR)$ and define $V$ by \eqref{defV}.
 Then   $V$ is a uniformly Lipschitz continuous function and 
 $\r(x,t)=V_x(x,t)$ is the unique weak solution of  \eqref{Breakscl} 
 that satisfies the entropy assumptions $(E_i)$ and  
 $(E_b)$ in the class $L^{\infty}\cap BV_{\text{\rm loc}}$ and with discontinuities 
 given by a discrete set of Lipschitz curves. 
 \label{Uniq}
 \end{theorem}

It is easy to check that the two-phase density profile $\r(x,t)$ in Corollary \ref{densityprofiles} is a weak solution (in the sense of \eqref{weakformulation}) to the scalar conservation law \eqref{scl} with flux function $F(x,\r) = c(x)\r(1-\r)$. However we cannot immediately apply this theorem in our case since the two-phase flux function $\widetilde{F}(x,\r) =(1- H(x))c_1f(\r) +  H(x)c_2f(\r)$ is finite only for $\r \in [0,1]$ and in particular is not $C^1.$ We show how we can replace $F(x,\r)$ with $\widetilde{F}(x,\r)$
in the above theorems in Section \ref{pdes}. In particular, we prove the following.

\begin{theorem}
For $\r \in \bR$ define $f_r(\r) = c_2(1-\r)\r$ and $f_{\ell}(\r) = c_1(1-\r)\r$ to be the flux functions for the scalar conservation law \eqref{Breakscl}. 
Let  the initial macroscopic profile 
for the hydrodynamic limit be a measurable function
 $0\le\r_0(x)\le 1$. 
  Then the  macroscopic density profile $\r(x,t)$ from the 
hydrodynamic limit in Theorem \ref{hydro} is the unique
solution described in 
 Theorem \ref{Uniq}.  
\label{pdecor}
\end{theorem} 

\chapter{Multi-phase TASEP}\label{Multi-phase TASEP}   

\section{Wedge last passage time}
\label{CGM}
The strategy of the proof of the hydrodynamic limit is the one from \cite{sepp99K}
and \cite{sepp01slow}.
Instead of the particle process we work with the height process.  
The limit is first proved for the jam initial condition of TASEP (also called step initial
condition) which for the height process is an initial wedge shape.  
This process can be equivalently represented by the wedge  last-passage model.
 Subadditivity 
gives the limit.   The general case then follows from an envelope 
property   that also leads to the variational representation of the limiting height profile.
In this section we treat the wedge case, and the next section puts it all together.  

Recall the notation and conventions introduced in the previous section.  In particular, 
$c(x)$ is  a positive, lower semicontinuous speed function with only   finitely many discontinuities in any compact set. Define a lattice analogue of the wedge $\cW$ by 
\be\cL=\{(i,j) \in \bZ^2: j\geq1, i\geq -j+1\}\label{defLL}\ee
 with boundary $\partial\cL=\{(i,0):i\geq 0\}\cup\{(i,-i): i<0\}$.

For each $n\in \bN$   construct a last-passage growth model on $\cL$ that represents the TASEP height function in the wedge. Let $\{ \tau^n_{i,j}: (i,j) \in \cL \}_{n\in \bN}$ denote a sequence of independent collections of i.i.d.\ exponential rate $1$ random variables. We need an extra index $\ell$ to denote the shifting. Define weights
\be
\omega_{i,j}^{n,\ell} = c\left(\dfrac{i - \ell}{n}\right)^{-1}, \quad (i,j)\in \mathcal{L}. 
\label{weights}
\ee 
For $\ell \in \bZ$ and  $n \in \bN$ assign to   site $(i,j) \in \cL$ the random variable $\omega_{i,j}^{n,\ell}\tau^n_{i,j}.$ 
Given lattice points $(a,b), (u,v) \in \cL$,   $\Pi((a,b),(u,v))$ is the set of lattice paths $\pi = \{(a,b) = (i_0,j_0),(i_1,j_1),...,(i_p,j_p) = (u,v)\}$ whose admissible steps satisfy 
\be
(i_l,j_l) - (i_{l-1},j_{l-1}) \in \{ (1,0), (0,1), (-1,1)\}.
\label{wedgepaths}
\ee
In the case that $(a,b)=(0,1)$ we simply denote this set by $\Pi(u,v)$. 
For $(u,v)\in \cL$, $\ell \in \bR$ and $n\in \bN$ denote the \textsl{wedge last passage time}
\be
T^{n,\ell}(u,v) = \max_{\pi \in \Pi(u,v)} \sum_{(i,j)\in \pi}\omega^{n,\ell}_{i,j}\tau^n_{i,j}
\label{lastpassagetime}
\ee with boundary conditions
\be
T^{n,\ell}(u,v)=0 \quad \text{for}\quad (u,v)\in \partial\cL.
\label{boundaryoflastpassagetime}
\ee

Admissible 
steps \eqref{wedgepaths}  come from the properties of the TASEP height function.  
Notice that $(0,1)$ is in fact  never used in a maximizing path.   

To describe macroscopic last passage times define,  for $(x,y)\in \cW$ and $q \in \bR$, 
\be
\Gamma^q(x,y)=\sup_{\textbf{x}(\cdot)\in \cH(x,y)}\Big\{ \int_{0}^{1}\frac{\gamma(\textbf{x}'(s))}{c(x_1(s)-q)}\,ds \Big\}.
\label{gammaq}
\ee
 
 \begin{theorem}
For all $q\in \bR$ and $(x,y)$ in the interior of $\cW$
\be
\dlim n^{-1}T^{n,\fl{nq}}(\fl{nx},\fl{ny})=\Gamma^q(x,y) \quad \text{a.s.}
\label{Tlimeq} \ee\label{Tlimit}
\end{theorem} 

\begin{remark} In a constant 
 rate $c$ environment the wedge last passage limit is
\be
\lim_{n\rightarrow\infty}\frac1{n} {T^{n}(\fl{nx},\fl{ny})} = c^{-1}\gamma(x,y)  = c^{-1}\left(\sqrt{x+y}+\sqrt{y}\right)^2.
\label{chomogeneous}
\ee 
The limit  $\gamma(x,y)$
  is   concave, but this is  not true 
 in general for   $\Gamma^0(x,y)$.
In some special cases concavity still holds,
such as   if the function $c(x)$ is nonincreasing if $x<0$ and nondecreasing if $x>0$. 
\end{remark} 

To prove Theorem \ref{Tlimit} we approximate $c(x)$ with step   functions. Let $-\infty = a_1 <a_2< ... < a_{L-1} <a_L = +\infty,$ and consider the lower semicontinuous step function 
\be 
c(x) = \sum_{m=1}^{L-1}r_m \mathbf{1}_{(a_m, a_{m+1})}(x) + \sum_{m=2}^{L-1}\min\{r_{m-1}, r_{m}\}\mathbf{1}_{\{ a_m \}}(x).
\label{simpleratefunction}
\ee    

\begin{proposition}
Let $c(x)$ be given by \eqref{simpleratefunction}. Then limit \eqref{Tlimeq} holds. 
 \label{simpleTlimit}
\end{proposition}

On the way to Proposition \ref{simpleTlimit} we state preliminary lemmas that will be used for
pieces of paths.  We write $c_i$ for the rate values instead of $r_i$ 
to be consistent with the notation in Theorem \ref{two-phaselastpassagetime}. 

\begin{lemma}
Assume that there is a unique discontinuity $a_2 = 0$ for the speed function $c(x)$ in \eqref{simpleratefunction}. Then for $y>0$
\[
\dlim n^{-1}T^{n,0}(0,\fl{ny}) = \dfrac{4y}{\min\{c_1, c_2\}} = \int_{0}^{1}\frac{\gamma(0,y)}{c(0)}\,ds\quad \text{a.s.} 
\]
\label{Verticalpassage}
\end{lemma}

\begin{proof} The upper bound in the limit is immediate from domination with  
constant rates $c(0)$. 

 For the lower bound we spell out the details for the case
 $c_1 \geq c_2$.
  Let $\varepsilon >0$.  To bound $T^{n,0}(0,\fl{ny})$ from below 
  force  the  path  to  go through  points  
$(0,1)$, $ \{(\fl{ny\e}, (k-1)\fl{ny\e}):  k=1,\dotsc, \fl{\e^{-1}}\}$ and  $(0,\fl{ny})$. 
For  $1\le k< \fl{\e^{-1}}$  let $T^n(R^n_k)$ be the last passage time from $(\fl{ny\e}, (k-1)\fl{ny\e})$  to 
  $(\fl{ny\e}, k\fl{ny\e})$.   $R^n_k$ refers to  the parallelogram that contains all the
  admissible paths   from $(\fl{ny\e}, (k-1)\fl{ny\e})$  to 
  $(\fl{ny\e}, k\fl{ny\e})$.  Each $R^n_k$ lies to the right of $x=0$ and therefore 
   in the $c_2$-rate area.  (See Fig.~\ref{lowrectangles}.)

\begin{figure}
\begin{center}
\includegraphics{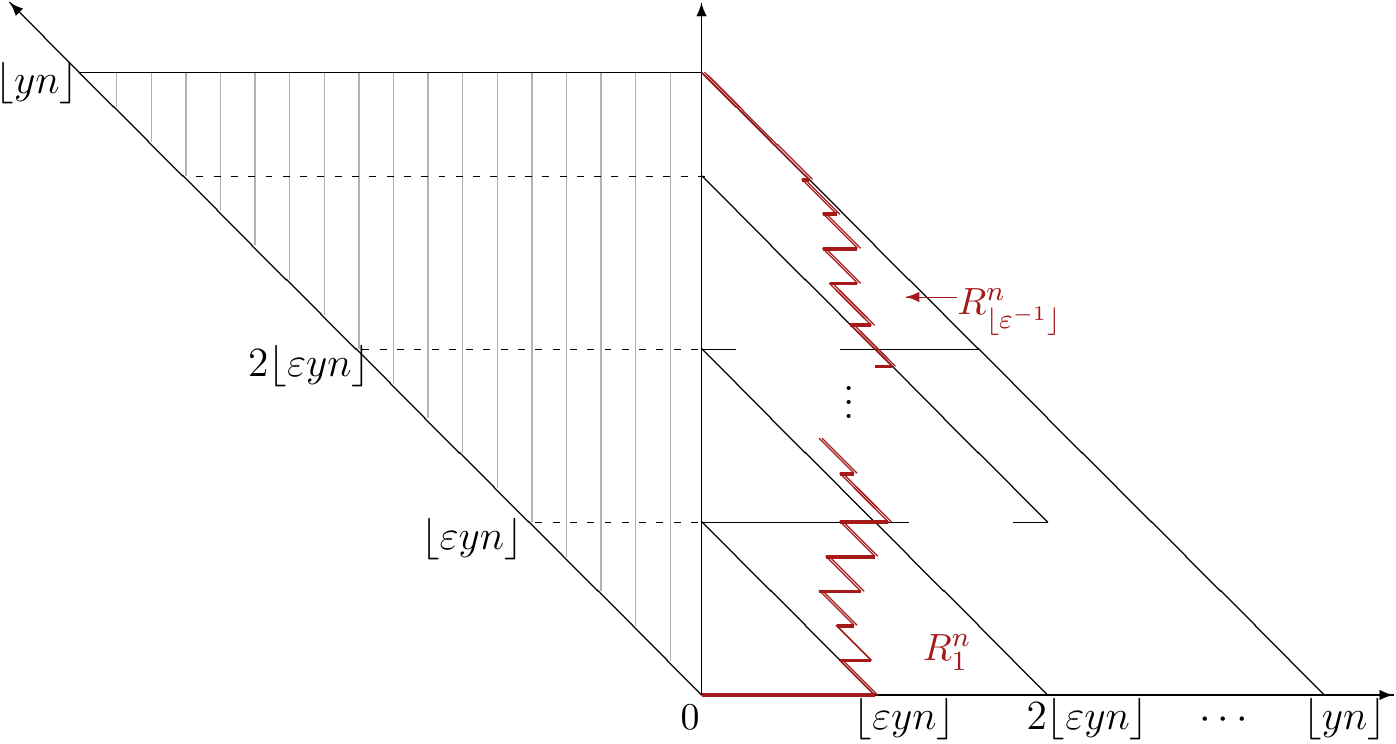}
\caption[Approximation of the last passage time with homogeneous rectangles]{A possible microscopic path   forced to go through opposite corners of the parallelograms $R^n_k$. The striped area left of   $x=0$ is the $c_1$-rate region. }
\label{lowrectangles}
\end{center}
\end{figure}

Let $0< \delta < \e {c_2}^{-1}\gamma(0,y)$. 
A large deviation estimate (Theorem 4.1 in \cite{sepp-large-deviations}) 
gives  a constant $C = C(c_2,y,\e,\delta)$ such that 
\be
\bP\left\{ T^n_{c_2}(R^n_k)\leq n(\e {c_2}^{-1}\gamma(0,y) - \delta) \right\}\leq e^{-Cn^2}.
\label{largedeviations}
\ee
 By a   Borel-Cantelli argument, for large $n$, 
\begin{align*}
T^{n,0}(0,\fl{ny}) &\ge \sum_{k=1}^{\fl{\e^{-1}}-1}T^n(R^n_k) \ge n(\fl{\e^{-1}}-1)
(\e {c_2}^{-1}\gamma(0,y) - \delta).  
\end{align*}
This suffices for the conclusion.  
 \end{proof}
\begin{remark} 
This lemma shows why it is convenient to use a lower semi-continuous speed function. A path that starts and ends at the same discontinuity stays mostly in the low rate region to maximize its weight. This translates macroscopically to the formula for the limiting time constant obtained in the lemma, involving only the value of $c$ at the discontinuity. If the speed function is not lower semi-continuous, we can state the same result using left and right limits.
\end{remark}

\begin{lemma}
Let  $ a = 0 < b< +\infty$
be discontinuities  for the step speed function $c(x)$ and $c(x)=r$ for $a<x<b$.  
Take  $z\in[0,b]$.  Let 
$\wt T^n(\fl{nz}, \fl{ny})$ be the wedge last passage time from $(0,1)$ to $(\fl{nz},\fl{ny})$ subject to the constraint that the   path has to stay in the $r$-rate region $(a,b)\times(0,+\infty)$,
except possibly for the initial and final steps. Then  
\be
\dlim n^{-1}\wt T^n(\fl{nz},\fl{ny}) = r^{-1}{\gamma(z,y)} \quad \text{a.s.} 
\ee
Same statement holds if $b\le z\le 0$.  
\label{Conditionalpassage}
\end{lemma}

\begin{proof}
The upper bound  $\varlimsup n^{-1}\wt T^n(\fl{nz},\fl{ny}) 
\leq {r}^{-1}{\gamma(z,y)}$  is immediate
by  putting constant rates $r$ everywhere and dropping the restrictions
  on the path. For the lower bound
adapt   the steps of the proof in Lemma \ref{Verticalpassage}. \end{proof}

Lemma \ref{Conditionalpassage} is a place where we cannot allow accumulation of discontinuities for the speed function.

\begin{figure}
\begin{center}
\includegraphics{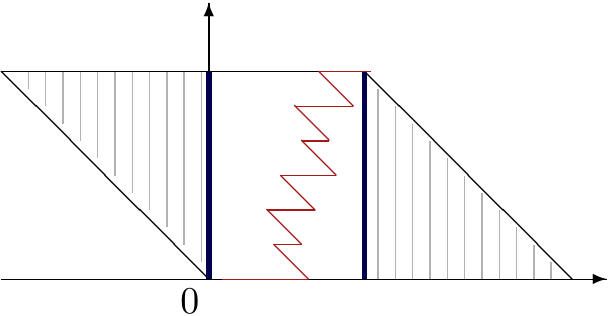}
\caption[Restricted paths in a homogeneous strip ]{A possible microscopic path described in Lemma \ref{Conditionalpassage}. The path has to stay in the unshaded region. }
\label{paths}
\end{center}
\end{figure}

 Before proceeding to the proof of Proposition \ref{simpleTlimit} we make
  a simple but important  observation  about the macroscopic paths 
  $\mathbf{x}(s)= (x^1(s), x^2(s))$, $s\in[0,1]$, in $\cH(x,y)$   
  for the case where  $c(x)$ is a step function \eqref{simpleratefunction}.   
    
\begin{lemma}
There exists a constant $C=C(x,y,c(\cdot-q))$ such that
the supremum in \eqref{gammaq} comes from 
 paths in  $\cH(x,y)$ that consist of 
at most $C$  line segments.  Apart from the first and last
segment, these segments can be of two types:  segments that 
go from one discontinuity of $c(\cdot-q)$ to a neighboring 
discontinuity, and vertical segments along a discontinuity. 
\label{segments}
\end{lemma}

 \begin{proof}   
Path $\mathbf x$  is  a union of subpaths $\{\bx_j\}$  along which  
$c(x_j^1(s)-q)$ 
is constant, except possibly at the endpoints.
  Given such a subpath $(\mathbf x_j(s): {t_j}\le s\le {t_{j+1}})$,  concavity of $\gamma$ and Jensen's inequality imply that the   line segment  
  $\phi_j$ that connects $\bx_j(t_j)$ to  $\bx_j(t_{j+1})$ dominates:  
\be
\int_{t_j}^{t_{j+1}} \frac{\gamma(\bx_j '(s))}{c(x_j^1(s)-q)}\,ds \leq \int_{t_j}^{t_{j+1}} \frac{\gamma(\phi_j '(s))}{c(\phi_j^1(s)-q)}\,ds.
\notag 
\ee
Consequently we can restrict to paths that are unions of line segments.  

To bound the number of line segments, observe first that 
the number of segments that go from one discontinuity to a neighboring
discontinuity is bounded.  The reason is that the restriction
$\bx'(s)\in\cW$ forces  such a segment  to
increase at least one of the coordinates by the distance between
the discontinuities.  

Additionally there can be subpaths that  touch
 the same discontinuity more than once 
without touching a different discontinuity. Lower semi-continuity
of $c(\cdot)$  and Jensen's inequality
show again that the vertical line segment  that stays on the discontinuity 
dominates such a subpath.  
Consequently there can be at most one (vertical) line segment 
between two line segments that connect distinct discontinuities.  
\end{proof} 
 
Next a lemma about the continuity of $\Gamma^q$. We write 
$\Gamma^q((a,b),(x,y))$ for the value in \eqref{gammaq} 
when the paths go from $(a,b)$ to $(x,y)\in (a,b)+\cW$. 

\begin{lemma}
Fix $z, w >0$. Then there exists 
a constant $C= C(z,w, c(\cdot-q))<\infty$ such  that for all 
$0< \delta \leq 1$ and $0 \leq a \leq z$
\be
\Gamma^q((a,0),(z,\delta)) - \Gamma^q((a,0),(z,0)) \leq C\sqrt{\delta},
\label{deltaparineq1}\ee
and for $0 \leq b \leq w$
\be
\Gamma^q((-b,b),(-w,w+\delta)) - \Gamma^q((-b,b),(-w,w)) \leq C\sqrt{\delta}.
\label{deltaparineq2}\ee
\label{deltapar}
\end{lemma}

\begin{proof}
Pick $\delta\in(0,1]$ and consider the point
 $(z,\delta)$ in $\cW$.  For any $\mathbf{x}= (x^1(s),x^2(s)) \in \cH(z,\delta)$ set 
\be 
I(\bx,q) =\int_{0}^{1}\dfrac{\gamma(\bx'(s))}{c(x^1(s)-q)}\,ds.  
\ee 
Let $\e>0 $ and assume that $\phi=(\phi^1,\phi^2)\in \cH(z,\delta)$ is a path such that $\Gamma^q(z,\delta) - I(\phi,q) < \e.$
 Lemma \ref{segments} implies that we can decompose $\phi$ into 
 disjoint linear segments $\phi_j$ so that $\phi= \sum_{j=1}^M \phi_j$
and $\phi_j:[s_{j-1},s_{j}]\to\cW$. Here $\sum_{j}\phi_j$ means path concatenation.

We can find segments $\phi_{j(k)}$, $1\le k\le N$,  such that 
\[\phi^1_{j(k)}(s_{j(k)-1})< \phi^1_{j(k)}(s_{j(k)}), \quad  
\phi^1_{j(k)}(s_{j(k)})= \phi^1_{j(k+1)}(s_{j(k+1)-1}), \]
$\phi^1_{j(1)}(s_{j(1)-1})=0$, and $\phi^1_{j(N)}(s_{j(N)})=z$. 
 In other words, the projections of the  segments  $\phi_{j(k)}$ cover the interval
 $[0, z]$ without overlap and without backtracking.  

We bound the contribution of the remaining path segments to $I(\phi, q)$. 
Let $J$ $=$ \break $\bigcup_{k=1}^{N-1} [s_{j(k)}, s_{j(k+1)-1}]$ be the 
leftover portion of the time interval $[0,1]$.  
The subpath $\phi(s)$, $s\in [s_{j(k)}, s_{j(k+1)-1}]$,
  (possibly) eliminated from between 
$\phi_{j(k)}$ and $\phi_{j(k+1)}$  satisfies 
$\phi^1(s_{j(k)})=\phi^1(s_{j(k+1)-1})$.   
Note that $\gamma(a,b)\le 2a+4b$ for $(a,b)\in\cW$ and 
$\int_0^1 (\phi^2)'(s)\,ds = \delta$. 
  We
 can bound as follows: 
  \begin{align}
  \int_{J}\frac{\gamma((\phi^1)'(s),(\phi^2)'(s))}{c(\phi^1(s)-q)}\,ds 
       &\leq C\int_{J}\gamma((\phi^1)'(s),(\phi^2)'(s))\,ds \notag\\
       &\leq C\int_{J} \big(2(\phi^1)'(s)+4(\phi^2)'(s)\big)\,ds \notag\\
       &\leq C\int_{J} 2(\phi^1)'(s)\,ds +  C\int_{0}^{1}4(\phi^2)'(s)\,ds \notag\\
       &= 0 + 4C\delta.       
  \end{align}
 
Set $t_k = s_{j(k)-1} < u_k = s_{j(k)}$. 
Define a horizontal path $w$ from $(0,0)$ to $(z,0)$ 
with segments 
\be
w_{k}(s)= \big(\phi_{j(k)}^{1}(s), 0\big), \quad \text{for } 
\  t_k \leq s \leq u_k, 
\ee
and constant on the complementary time set $J$.

To get the lemma, we estimate \normalsize{
\begin{align}
\Gamma^q(z&,\delta) -\e \leq I(\phi,q) = \int_J \frac{\gamma(\phi'(s))}{c(\phi^1(s)-q)}\,ds +  \int_{[0,1]\setminus J} \frac{\gamma(\phi'(s))}{c(\phi^1(s)-q)}\,ds\notag \\ 
                   &\leq C\delta + \sum_{k = 1}^{N} \Big(I(\phi_{j(k)},q) - I(w_{k}, q)\Big)+ \Gamma^q(z,0) \notag \\
                   &\leq C\delta + C'\sum_{k=1}^{N}\int_{t_{k}}^{u_{k}} \big(\gamma(\phi'_{j(k)}(s)) - \gamma(w'_{k}(s))\big)\,ds+  \Gamma^q(z,0)\notag \\
                   &\leq C\delta + C'\sum_{k=1}^{N}\bigg(\int_{t_{k}}^{u_k} (\phi^2)'_{j(k)}(s)\,ds +\notag \\ &\phantom{xxxxxxxxxxxxxxx}+2\int_{t_{k}}^{u_k}\sqrt{(\phi^2)'_{j(k)}(s)}\sqrt{(\phi^1)'_{j(k)}(s)+(\phi^2)'_{j(k)}(s)} \,ds\bigg)+\Gamma^q(z,0)\notag \\ 
                    &\leq C\delta + C'\sum_{k=1}^{N}\bigg(\int_{t_{k}}^{u_k} (\phi^2)'_{j(k)}(s)\,ds\bigg)^{\frac{1}{2}}\bigg(\int_{t_{k}}^{u_k}\big((\phi^1)'_{j(k)}(s)+(\phi^2)'_{j(k)}(s)\big) \,ds\bigg)^{\frac{1}{2}}+  \Gamma^q(z,0)\notag \\ 
                    &\leq C\delta + C'\bigg(\sum_{k=1}^{N}\int_{t_{k}}^{u_k} (\phi^2)'_{j(k)}(s)\,ds\bigg)^\frac 12\times \notag \\
                    &\phantom{xxxxxxxxxxxxx}\times\bigg(\sum_{k=1}^{N}\int_{t_{k}}^{u_k}\big((\phi^1)'_{j(k)}(s)+(\phi^2)'_{j(k)}(s)\big) \,ds\bigg)^{\frac{1}{2}}+  \Gamma^q(z,0)\notag \\ 
                    &\leq C\delta + C'\sqrt{\delta}\sum_{k=1}^{N}\int_{t_{k}}^{u_k}\big((\phi^1)'_{j(k)}(s)+(\phi^2)'_{j(k)}(s) \big)\,ds+\Gamma^q(z,0)\notag \\ 
                    &\leq C\delta + C'\sqrt{\delta}\sqrt{z+\delta}+  \Gamma^q(z,0)\notag \\
                    &\leq C\delta + C'\sqrt{\delta}\sqrt{z}+ \Gamma^q(z,0).\notag  
\end{align}}
 The first
 inequality \eqref{deltaparineq1} follows
 for $a = 0$ by letting $\e$ go to 0.  It also follows
 for all $a\in[0, z]$ by shifting the origin to $a$ which 
replaces $z$ with $z-a$.

For the second inequality \eqref{deltaparineq2} the arguments are 
analogous, so we omit them.
\end{proof}

\begin{figure}
\begin{center}
\includegraphics{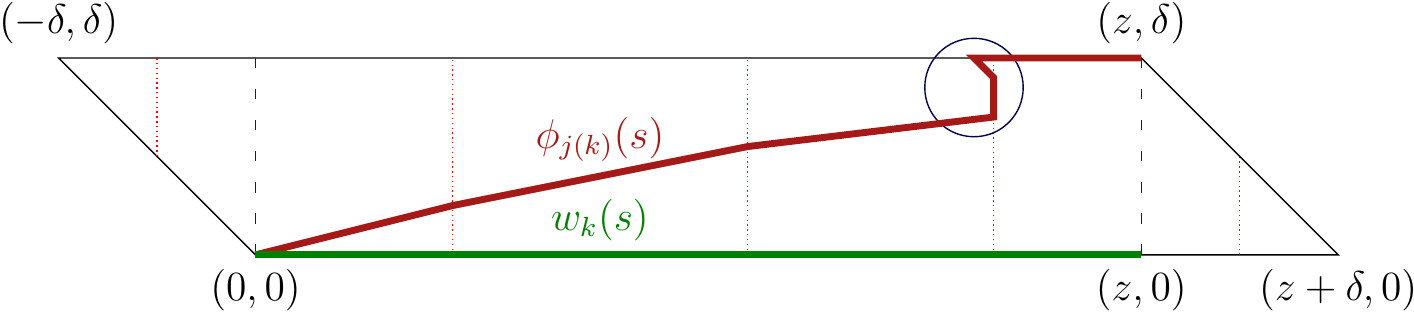}
\caption[Pathological paths in thin strips]{ A possible macroscopic path from $(0,0)$ to $(z,\delta)$. 
 The dotted vertical lines  are discontinuity columns of $c(\cdot-q).$ The error from eliminating the segments outside the two vertical dashed lines and from eliminating  pathologies (like the circled part) is of order $\delta$ and a comparison with the horizontal path leads to an error of order $\sqrt{\delta}$.}
\label{thinstrip}
\end{center}
\end{figure}

\begin{corollary}
Fix $(x,y) \in \cW$.  Then there exists  $C=C(x, y, c(\cdot - q))<\infty$
such that for all
 $0<\delta \le 1 $ 
\be
\Gamma^q(x,y+\delta) - \Gamma^q(x,y) < C\sqrt{\delta}.
\ee
\label{deltapar2}
\end{corollary}

\begin{proof}
Let $A((a,b),(x,y))$ be the parallelogram with sides parallel to
 the boundaries of the wedge, north-east corner the point $(x,y)$ and 
south-west corner at $(a,b)$. If $(a,b) = (0,0)$ we simply write $A(x,y)$.

Let $\e > 0.$ Let $\phi^{\e}$ a path such that $\Gamma^q(x,y+\delta) - I(\phi^{\e}, q)< \e$. Let $u$ be the point where $\phi^{\e}$ 
first intersects the north or the east  boundary of $A(x,y)$.
 Without loss of generality assume it is the north boundary
and so  $u = (a,y)$ for some $a\in[-y,x]$.  Then,
\begin{align}
\Gamma^q(x,y+\delta)-\e &\leq I(\phi^{\e}, q) \notag \\
                        &\leq \Gamma^q(a,y)+\Gamma^q((a,y),(x,y+\delta)) \notag \\
                        &=\Gamma^q(a,y)+\Gamma^q((a,y),(x,y)) + \Gamma^q((a,y),(x,y+\delta))-\Gamma^q((a,y),(x,y))\notag \\
                        &\leq \Gamma^q(x,y)+ \Gamma^q((a,y),(x,y+\delta))-\Gamma^q((a,y),(x,y)). 
\end{align}
The last inequality gives
\be
\Gamma^q(x,y+\delta) - \Gamma^q(x,y)\leq \Gamma^q((a,y),(x,y+\delta))-\Gamma^q((a,y),(x,y)) +\e \leq C\sqrt{\delta} +\e
\ee
by Lemma \ref{deltapar}. Let $\e$ decrease to $0$ to prove the Corollary.
\end{proof}

\begin{proof}[Proof of Proposition \ref{simpleTlimit}]
Fix $(x,y)$ in the interior of $\cW$.  
For   $\mathbf{x}= (x^1(s),x^2(s)) \in \cH(x,y)$ set 
\be 
I(\bx,q) =\int_{0}^{1}\dfrac{\gamma(\bx'(s))}{c(x^1(s)-q)}\,ds.  
\ee 

We prove first 
\be
\varliminf_{n\rightarrow \infty} n^{-1}T^{n,\fl{nq}}(\fl{nx},\fl{ny})\geq \Gamma^q(x,y)
\equiv \sup_{\textbf{x}(\cdot)\in \cH(x,y)} I(\bx,q).    
\ee
It suffices to consider  macroscopic paths   of the type  
\be
\bx(s) = \sum_{j=1}^{H}\bx_j(s)\mathbf{1}_{[s_j,s_{j+1})}(s)
\label{type22}  \ee 
where $H\in \bN$,   $\bx_j$ is the straight line segment 
from  $\bx(s_j)$ to  $\bx(s_{j+1})$,    $c(x_1(s) - q) = r_{m_j}$ is constant for $s\in( s_j, s_{j+1})$, 
and by continuity   $\bx_j(s_{j+1}) = \bx_{j+1}(s_{j+1})$.

 Let $\pi^n$ be the  microscopic path through points 
$(0,1)$,   $ \{\fl{n\mathbf x_j(s_j)}: 1\le j\le H\}$ and  $(\fl{nx},\fl{ny})$
constructed so that its segments $\pi^n_j$  satisfy these requirements: 

(i)   $\pi^n_j$   lies inside the region where $\omega^{n,\fl{nq}}_{i,k} = r_{m_j}^{-1}$ 
is constant, except possibly for the initial and final step;   

(ii) $\pi^n_j$ maximizes passage time between its endpoints $\fl{n\textbf{x}_j(s_j)}$
and  $ \fl{n\bx_{j+1}(s_{j+1})}$ subject to the above requirement.

Let 
\be T^{n,\fl{nq}}_j = \max_{\pi^n_j} \sum_{(i,k) \in \pi^n_j}\omega^{n,\fl{nq}}_{i,k}\tau_{i,k}
\label{Tpieces}
\ee 
denote the  last-passage time of a segment subject to these constraints.  
Observe that the proofs of  Lemmas \ref{Verticalpassage} and  \ref{Conditionalpassage} do not depend on the shift parameter $q$, therefore
\begin{align*}
\dlim n^{-1}T^{n,\fl{nq}}_j = \dfrac{\gamma( \bx_j(s_j) - \bx_{j+1}(s_{j}))}{r_{m_j}}  
                                   = \int_{s_j}^{s_{j+1}}\dfrac{\gamma( \bx'_j(s))}{c(x^1_j(s) - q)}\,ds.
\end{align*}
Adding up the segments gives the lower bound: 
\begin{align*}
\varliminf_{n\rightarrow \infty} n^{-1}T^{n,\fl{nq}}(\fl{nx},\fl{ny})
&\ge  \varliminf_{n\rightarrow \infty} \sum_{j} n^{-1} T^{n,\fl{nq}}_j \\
 &=\sum_{j} \int_{s_j}^{s_{j+1}}\dfrac{\gamma(\bx'_j(s))}{c(x^1(s) -q)}\,ds  
                                                       =I(\bx,q).   
\end{align*} 

Now for the complementary upper bound 
\be
\varlimsup_{n\rightarrow \infty} n^{-1}T^{n,\fl{nq}}(\fl{nx},\fl{ny})\leq \Gamma^q(x,y).
\label{ub7}\ee
Each microscopic path  to $(\fl{nx},\fl{ny})$ 
 is contained in $nA$ for a fixed parallelogram 
$A \subseteq \mathcal{W}$ with sides parallel to the wedge boundaries. 
Pick $\e>0$.
 Let $r_*>0$ be a lower bound on all the rate values that appear in the set $A$. 
Find $\delta>0$ such that   $\abs{\gamma(v) - \gamma(w)}  < \e r_*$ 
 for all $v,w \in A$ with $\abs{v-w} < \delta$ and $\delta \le 1$ 
 so that Corollary \ref{deltapar2} is valid.

Consider an arbitrary microscopic path from $(0,1)$ to 
$(\fl{nx},\fl{ny})$.   Given the speed function and $q$,
there is a fixed upper bound $Q=Q(x,y)$ 
on the number of segments of the path that start at one
discontinuity column $(\fl{na_i} + \fl{nq})\times\bN$ and 
end at a neighboring discontinuity column 
$(\fl{na_{i\pm 1}} + \fl{nq})\times\bN$. The reason
is that  there is an
order $n$ lower bound on the number of lattice steps it takes 
to travel between distinct discontinuities  
in $nA$.

 Fix $K\in\bN$ and partition the interval $[0,y]$ evenly by $b_j=jy/K$, $0\le j\le K$,  
so that $y/K < \delta/Q.$ Make the partition finer by adding 
the $y-$coordinates of the intersection points of discontinuity lines
$\{a_i+q\}\times\bR_+$  with the boundary of $A$. 

Let $\pi^n$ be the maximizing microscopic path.  
We decompose   $\pi^n$ into  path 
segments  $\{\pi^n_{j}: 0\le j<M_n\}$
by looking at visits to   discontinuity columns $(\fl{na_i} + \fl{nq})\times\bN$, 
both repeated visits to the same
column and visits to a column different from the previous one.  
Let  
$\{0=b_{k_0}\leq b_{k_1} \leq b_{k_2} \leq ... \leq b_{k_{M_n-1}} \leq b_{k_{M_n}}= y\}$ 
be a sequence of partition points  and   
 $\{ 0=x_0,\, x_1=a_{m_1}+q,\, x_2=a_{m_2}+q, \dotsc, x_{M_n}=x\}$ 
a sequence where $x_{j}$ for $0<j<M_n$ are discontinuity points of the 
shifted speed function $c(\cdot\,-q)$. 
We can create the path segments and these sequences  
with the property that 
segment $\pi^n_{j}$ starts at  
  $(\fl{nx_{j}}, l)$ with $l$ in the range $\fl{nb_{k_j}} \leq l \leq \fl{nb_{k_{j}+1}}$ and ends at $(\fl{nx_{j+1}}, l')$ with $\fl{nb_{k_{j+1}}} \leq l' \leq \fl{nb_{k_{j+1}+1}}.$
%
In an extreme
case the entire path $\pi^n$ can be a single segment that does  not
touch discontinuity columns.  

In order to have a  fixed  upper bound on the total number 
$M_n$ of   segments, uniformly in $n$, we insist that
for $0<j<M_n-1$ the labels satisfy:

(i) For odd $j$, $\pi^n_j$ starts and ends at the same discontinuity column $(\fl{nx_{{j}}}, \,\cdot\,)$.    The rate relevant for segment $\pi^n_j$
is  $r_{\ell_j}=c(a_{m_j})$. 

(ii) For  even $j$, $\pi^n_j$ starts and ends at different neighboring
discontinuity columns, and  except for the initial and final points,
  does not touch any discontinuity column
 and visits only points that are in a 
region of constant rate $r_{\ell_j}$.  

The above conditions may create empty segments.  This is not harmful.
Replace $Q$ with $2Q+2$ to continue having the uniform upper bound
 $M_n\le Q$.  

Let $T(\pi^n_j)$  be the total weight of segment $\pi^n_j$.
Let $\tilde\pi^n_j$ be the maximal path from  $(\fl{nx_j},\fl{nb_{k_j}})$
to $(\fl{nx_{j+1}},\fl{nb_{k_{j+1}+1}})$ in an environment with constant
weights $\omega_{i,j}=r_{\ell_j}^{-1}$ everywhere on the lattice,
with total weight $ T^n_j$.
 $ T^n_j\geq T(\pi^n_j)$, up to an error from the endpoints
of $\pi^n_j$.

Theorem 4.2 in \cite{sepp-large-deviations} gives
 a large deviation bound for  $ T^n_j$. Consider a constant 
rate $r$ environment and the maximal weight 
$T\bigl((\fl{nu_1},\fl{nv_1}), (\fl{nu_2},\fl{nv_2})\bigr)$
between two points $(u_1,v_1)$ and  $(u_2,v_2)$ such that 
their lattice versions can be connected by admissible paths for
all $n$. 
Then 
there exists a positive constant $C$ such that for $n$ large enough,
\be
\bP\Bigl\{ T\bigl((\fl{nu_1},\fl{nv_1}), (\fl{nu_2},\fl{nv_2})\bigr)
 > n r^{-1} 
\gamma( u_2-u_1 , v_2-v_1 ) + n\e  \Bigr\} < e^{-Cn}.
\label{ld1}
\ee 

There is a fixed finite collection out of which we pick 
the 
pairs $\{(x_j, b_{k_j}), (x_{j+1}, b_{k_{j+1}+1})\}$ that determine
the segments  $\tilde \pi^n_j$.
By \eqref{ld1} and  the Borel-Cantelli lemma, a.s.\ for  large enough $n$, 
\be
T^n_j \; \le\;  n r_{\ell_j}^{-1} 
\gamma( x_{j+1}-x_j , b_{k_{j+1}+1} - b_{k_j})
 + n\e \quad \text{for $0\le j<M_n$.}  
\label{ld3}
\ee  

Define $\delta_1>0$ by  $y+\delta_1=\sum_{j=0}^{M_n-1} (b_{k_{j+1}+1}-b_{k_j})$.
Since $y=\sum_{j=0}^{M_n-1} (b_{k_{j+1}}-b_{k_j})$ and by the choice
of the mesh of the partition $\{b_k\}$, we have 
$\delta_1\le  M_n\delta/Q\le \delta$.  
Think of $( x_{j+1}-x_j , b_{k_{j+1}+1} - b_{k_j})$,
$0\le j<M_n$,  as the successive
segments of a macroscopic  path from $(0,0)$  to $(x,y+\delta_1)$. 

For sufficiently large $n$ so that \eqref{ld3} is in effect,
\begin{align*} 
T^{n,\fl{nq}}(\fl{nx},\fl{ny}) &\le \sum_{j=1}^{M_n} T^n_j 
\le n \sum_{j=1}^{M_n}  r_{\ell_j}^{-1} 
\gamma( x_{j+1}-x_j , b_{k_{j+1}+1} - b_{k_j}) + nQ\e \\
&\le  n \Gamma^q(x,y+\delta_1)+ nQ\e  \\
&\le n\Gamma^q(x,y)+ nC\sqrt{\delta}  + nQ\e. 
\end{align*}
The last inequality came from Corollary \ref{deltapar2}. 
Let $\delta\rightarrow 0$. Since $\e$ was arbitrary the upper bound \eqref{ub7} holds. 
\end{proof}

 \begin{proof}[Proof of Theorem \ref{Tlimit}]  Fix $(x,y)$.  
For each  $\e  > 0$ we can find  lower semicontinuous step functions
$c_1$ and $c_2$ such that  $\norm{c_1-c_2}_\infty\le\e$ and on some
compact interval,  large enough to contain all the rates that can potentially
influence $\Gamma^q(x,y)$, 
   $c_1(x)\leq c(x)\leq c_2(x) $.    When the weights in \eqref{weights}  
   come  from speed function $c_i$  let us write 
  $T_i$ for last passage times and $\Gamma_i$ for their limits. 
An obvious
coupling using common exponential variables $\{\tau_{i,j}\}$ gives 
\[
T_1^{n,\fl{nq}}(\fl{nx},\fl{ny})\geq T^{n,\fl{nq}}(\fl{nx},\fl{ny})\geq T^{n,\fl{nq}}_2(\fl{nx},\fl{ny}).
\]   
Letting $\alpha>0$ denote a lower bound for $c(x)$ in the   compact interval relevant for $(x,y)$,
we have this bound for $\bx\in\cH(x,y)$: 
\begin{align*}
0\le \int_{0}^{1} \Bigl\{ \frac{\gamma(\textbf{x}'(s))}{c_1(x_1(s)-q)}
- \frac{\gamma(\textbf{x}'(s))}{c_2(x_1(s)-q)}\Bigr\} \,ds 
                             &\leq \e \int_{0}^{1}\frac{\gamma(\textbf{x}'(s))}{c^2_1(x_1(s)-q)}\,ds \\ 
                             &\leq \e  \alpha^{-2}\gamma(x,y).  
\end{align*} 
Therefore the limits also have the bound 
\begin{align*}
0\le \Gamma^q_1(x,y) - \Gamma^{q}_2(x,y) \leq C(x,y)\e. 
\end{align*}
From these approximations and the   limits for $T_i$  in Proposition \ref{simpleTlimit}
we can deduce Theorem \ref{Tlimit}. 
\end{proof}

\begin{proof}[Proof of Theorem \ref{two-phaselastpassagetime}]
We can construct the last passage times $G(x,y)$ of the
 corner growth model \eqref{basicref} with the same ingredients as the
wedge last passage times $T^{n,0}(x,y)$
of \eqref{lastpassagetime}, by taking $Y_{(i,j)}=\omega^{n,0}_{i-j,\,j}\tau^n_{i-j,\,j}$. 
Then  $T^{n,0}(x,y)=G(x+y,y)$ and  we can transfer the problem to the wedge.
The correct speed function to use  is now 
   $c(x) = c_1 \mathbf{1}\{ x < 0\}+c_2 \mathbf{1}\{ x\geq 0 \}$.
   In this case the limit in Theorem
\ref{Tlimit} can be solved explicitly with calculus.  We omit the details.  
 \end{proof}

\section{Hydrodynamic limit}
\label{hydrolimit}
In this section we sketch the proof of the main result Theorem \ref{hydro}. This argument is
from \cite{sepp99K, sepp01slow}.  

\subsection{Construction of the process and the variational coupling}
  For each $n \in \bN$ we construct a 
$\bZ$-valued \textsl{height process} $z^n(t)=(z^n_i(t): i \in \bZ)$.
 The height values obey the constraint 
\be
0\leq z^n_{i+1}(t)-z^n_i(t)\leq 1.
\label{constraint1}
\ee
Let $\{ \cD^n_i \}$ be a collection of mutually independent (in $i$ and $n$) Poisson processes with rates $c_i^n$ given by 
\be
c_i^n = c( n^{-1} i),
\label{discrete}
\ee
where $c(x)$ is the lower semicontinuous speed function. Dynamically, for each $n$ 
and  $i$, the height value $z^n_i$ is decreased by $1$ at  event times of $\cD^n_i$,
provided    the new configuration  does not violate \eqref{constraint1}. 
 
After we construct $z^{n}(t)$, we can define the exclusion process $\eta^{n}(t)$ by 
\be
\eta^n_i(t)= z^n_i(t)-z^n_{i-1}(t).
\label{exclusion-current}
\ee
A decrease in $z^n_i$ is associated with an exclusion  particle jump   from site $i$ to   $i+1$.  
Thus the $z^n$  process 
keeps track of the current of the $\eta^n $-process, precisely speaking  
 \be
J^n_i(t)=z^n_i(0)-z^n_i(t).
\label{zcurrent}
\ee 

Assume that the processes $z^n$ have  been constructed on a probability space that supports the initial configurations $z^n(0) = (z^n_i(0))$ and the Poisson processes $\{\cD^n_i\}$ that are independent of $(z^n_i(0))$.   Next we state the envelope property that is the key tool for the 
proof of the hydrodynamic limit.  Define a family of auxiliary height processes 
$\{\xi^{n,k}: n\in\bN,\,k\in\bZ\}$ that grow upward from  wedge-shaped initial conditions
\be
\xi_i^{n, k}(0) =\left\{
\begin{array}{ll}

\vspace{0.1 in}
0, & \textrm {if } i \geq 0  \\

-i, & \textrm{if } i < 0.
\end {array}
\right.
\label{xi-initial}
\ee
The dynamical rule for the $\xi^{n, k}$ process is that $\xi^{n,k}_i$ jumps up by 1
at the event times of $\cD^{n}_{i+k}$ provided the inequalities 
\be
\xi^{n,k}_i\leq\xi^{n,k}_{i-1} \quad \text{and} \quad \xi^{n,k}_i\leq\xi^{n,k}_{i+1}+1 
\label{xiineq}  
\ee
are not violated. In particular  $\xi^{n,k}_i$ attempts a jump at rate $c^n_{i+k}$.

\begin{lemma}[Envelope Property]
For each $n \in \bN$, for all $i\in \bZ$ and $t\geq 0$,
\be
z^n_i(t) = \sup_{k\in \bZ}  \{ z^n_k(0) - \xi^{n, k}_{i-k}(t)\} \quad a.s.
\label{envelope2}
\ee
\end{lemma}
Equation \eqref{envelope2}  holds by construction at time $t=0$, and it is
proved by induction on jumps. For  details see Lemma 4.2 in \cite{sepp99K}.

\subsection{The limit for $\xi$}
For $q,x \in \bR$, $t>0$ and for the speed function $c(x)$, define 
\be
g^{q}(x,t)= \inf\left\{ y: (x,y)\in \cW, \Gamma^q(x,y)\geq t  \right\}.
\label{gq}
\ee
$\Gamma^q(x,y)$ defined by \eqref{gammaq}  represents the macroscopic time it takes a $\xi$-type interface process 
to reach point $(x,y).$ 
The level curve of $\Gamma^q$ given by $g^q(\cdot,t)$ represents the limiting interface of a certain $\xi$-process,
as stated in the next proposition.
\begin{proposition}
For all $q,x \in \bR$ and $t>0$
\be
\dlim n^{-1}\xi^{n, \fl{nq}}_{\fl{nx}}(nt)=g^{-q}(x,t) \quad a.s.
\label{xilimit}
\ee 
\label{xilimit!}
\label{proxilimit}
\end{proposition}

Recall the lattice wedge  $\mathcal{L}$  defined by \eqref{defLL}.  
 For $(i,j)\in \cL\cup\partial\cL$, let 
\be
L^{n, k}(i,j)= \inf\{t\geq 0: \xi^{n,k}_i(t)\geq j\}
\ee
denote the time when $\xi^{n,k}_i$ reaches level $j$. The rules \eqref{xi-initial}--\eqref{xiineq} give the boundary conditions 
\be
L^{n,k}(i,j) = 0 \quad \text{for}\quad (i,j)\in \partial\cL
\label{Lboundary}
\ee and for $ (i,j) \in \cL$  the recurrence 
\be
L^{n,k}(i,j)= \max\{ L^{n,k}(i-1,j), L^{n,k}(i,j-1), L^{n,k}(i+1,j-1) \}+\beta^{n,k}_{i,j} 
\label{L}
\ee
where $\beta^{n,k}_{i,j}$ is an exponential waiting time, independent of everything else. It represents the time $\xi^{n,k}_i$ waits to jump, \textit{after} $\xi^{n,k}_i$ and its neighbors $\xi^{n,k}_{i-1}$, $\xi^{n,k}_{i+1}$ have reached positions that permit $\xi^{n,k}_i$ to jump
from $j-1$  to $j$. The dynamical rule that governs the jumps of $\xi^{n,k}_i$ implies that  $\beta^{n,k}_{i,j}$ has rate $c^n_{i+k}$.

Equations \eqref{lastpassagetime}, \eqref{boundaryoflastpassagetime}, \eqref{Lboundary},
and  \eqref{L}, together with the strong Markov property,  imply that 
\be
\{L^{n,k}(i,j):(i,j) \in \cL\cup\partial\cL \} \, \overset{\cD}= \, \{T^{n,-k}(i,j):(i,j) \in \cL\cup\partial\cL \}.
\label{L=-T}
\ee 
Consequently Theorem \ref{Tlimit}  gives 
the a.s.\ convergence $n^{-1} L^{n,\fl{nq}}(\fl{nx},\fl{ny})\to \Gamma^{-q}(x,y)$, 
and this passage time
limit gives limit \eqref{xilimit}.  

\begin{proof}[Proof of Theorem \ref{hydro}]
Given the initial configurations $\eta^n(0) = \left\{ \eta^n_i(0): i \in \bZ \right\}$ that appear in hypothesis \eqref{weaklaw}, define initial configurations $z^n(0) = \left\{ z^n_i(0): i \in \bZ \right\}$ so that $z^n_0(0)=0$ so that \eqref{exclusion-current} holds at time $t=0.$ Hypothesis \eqref{weaklaw} implies that
\be
\lim_{n\rightarrow \infty} n^{-1} z^n_{\fl{nq}} = v_0(q) \quad \textrm{a.s.} 
\label{weakz}
\ee  
for all $q \in \bR$, with $v_0$ defined by \eqref{vdef}.

Construct the height processes $z^n$ and define the exclusion processes $\eta^n$ by \eqref{exclusion-current}. Define $v(x,t)$ by \eqref{velocityversion1}. From 
\eqref{exclusion-current}--\eqref{zcurrent} we see that Theorem \ref{hydro}
 follows from proving that for all $x\in \bR, t\in \bR^+,$ 
\be
\lim_{n\rightarrow \infty}n^{-1}z^n_{\fl{nx}}(nt)= v(x,t) \quad \textrm{a.s.}
\label{z-v}
\ee  
Rewrite \eqref{envelope2} with the correct scaling:
\be
n^{-1}z^n_{\fl{nx}}(nt)= \sup_{q \in \bR}\left\{ n^{-1}z^n_{\fl{nq}}(0) - n^{-1}\xi^{\fl{nq}}_{\fl{nx}-\fl{nq}}(nt) \right\}.
\label{scaledz}
\ee

The proof of \eqref{z-v} is now to show that the right-hand side of \eqref{scaledz} converges to the right-hand side of \eqref{velocityversion1}.
 
From \eqref{weakz}, \eqref{scaledz} and \eqref{xilimit} we can prove that a.s.
\be
\lim_{n\rightarrow \infty}n^{-1}z^n_{\fl{nx}}(nt)= \sup_{q \in \bR}\left\{ v_0(q)-g^{-q}(x-q, t) \right\}\equiv \tilde{v}(x,t).
\label{tildev}
\ee 
The argument is the same as the one from equations (6.4)--(6.15) in \cite{sepp99K} so we will not repeat it here. 

Using \eqref{gammaq} and \eqref{gq} we can rewrite $\tilde{v}(x,t)$ as
\be
\tilde{v}(x,t)= \sup_{q,y \in \bR}\Big\{ v_0(q)-y: \exists \bx \in \cH(x-q,y) \textrm{ such that } \int_0^1\frac{\gamma(\bx'(s))}{c(x_1(s) + q)}\,ds \geq t \Big\}.
\label{massagedvtilde}
\ee  
The final step is to prove $v(x,t) = \tilde{v}(x,t).$ The argument is identical to the one used to prove Proposition 4.3 in \cite{sepp01slow} so we omit it.   With this we can consider 
Theorem \ref{hydro} proved.  
\end{proof}
  
\section{Density profiles in two-phase TASEP}
\label{Density}
This section proves Corollary \ref{densityprofiles}: assuming  $c(x) =(1- H(x))c_1 + H(x)c_2$, $c_1 \geq c_2$ and $\r_0(x) \equiv \r \in (0,1)$,  we   use  variational formula \eqref{velocityversion1}
to obtain   explicit hydrodynamic limits. 
\begin{remark}
In light of Theorem \ref{pdecor}, one can (instead of doing the following computations) guess the candidate solution for the scalar conservation law  \eqref{Breakscl} and then check that it verifies the entropy conditions \eqref{Eb1} - \eqref{Eb3}. The following computations do not require any knowledge of p.d.e.\ theory or familiarity with interface problems so we present them independently in this section.
\end{remark}
 Let 
\be
C^0(x,t,q) = \left\{w\in C( [0,t], \bR):  w \text{ piecewise linear, } w(0)=q, w(t)=x \right\}.  
\ee
To   optimize in \eqref{velocityversion1} we use a couple
different approaches for different cases. We outline this and omit the details.  

One approach is to separate the choice of the starting point $q$ of the path. 
By setting  
\be
I(x,t,q) = \inf_{w \in C^0(x,t,q)}\bigg\{ \int_{0}^{t}c(w(s))g\left( \dfrac{w'(s)}{c(w(s))} \right)\,ds \bigg\}
\label{Ixtq}
\ee
  \eqref{velocityversion1} becomes\be
v(x,t) = \sup_{q \in \bR}\left\{ v_0(q) - I(x,t,q) \right\}.
\label{spoptimize}
\ee
We  distinguish four cases according to the signs of $x,q$.  Set 
\be
R_{+}(x,t) = \sup_{q > 0}\left\{ v_0(q) - I(x,t,q) \right\},\textrm{ if } x>0,\quad
\ee
\be
L_{-}(x,t) = \sup_{q < 0}\left\{ v_0(q) - I(x,t,q) \right\},\textrm{ if } x<0.
\ee 
These functions are going to be used in Cases 1 and 2 below ($qx \geq 0$) where we can 
compute $I(x,t,q)$ directly. 

However, there are values  $(x,t, q)$ for which the $q$-derivative of the 
expression in   braces in \eqref{spoptimize} is a rational function with a quartic polynomial in the numerator. While an explicit formula for   roots of a quartic  exists, the solution is not attractive and
it is not clear how to pick the right root. Instead we turn the problem into a two-dimensional maximization problem. 

If $qx < 0$    the optimizing path $w$ crosses the origin:
$w(u)= 0$ for some $u$.  It turns out convenient  
to find the optimal $q$ for each crossing time $u$. For Case 3 ($q<0,x>0$) set
\be
\Phi(u, q) = q\r - c_1ug\left(\dfrac{-q}{uc_1}\right) - c_2(t-u)g\left( \dfrac{x}{(t-u)c_2} \right)
\ee 
and 
\be
L_{+}(x,t) = \displaystyle\sup_{q < 0, u \in[0,t]}\Phi(u,q).  
\ee
For Case 4 ($q>0,x<0$)   the obvious modifications are 
\be
\Psi(u, q) = q\r - c_2ug\left(\dfrac{-q}{uc_2}\right) - c_1(t-u)g\left( \dfrac{x}{(t-u)c_1} \right)
\ee 
and
\be
R_{-}(x,t) = \displaystyle\sup_{q > 0, u \in[0,t]}\Psi(u,q).  
\notag
\ee
Rewrite \eqref{spoptimize} using   functions $R_{\pm}$, $L_{\pm}$: 
\be
v(x,t)= \max\{ R_{+}(x,t), L_{+}(x,t)\}\mathbf{1}\{x\geq 0\} + \max\{ R_{-}(x,t), L_{-}(x,t)\}\mathbf{1}\{x < 0\}.
\label{R-Lvelocity} 
\ee

\begin{proof}[Proof of Corollary \ref{densityprofiles}\\]

We  compute the functions  $R_{\pm}$, $L_{\pm}$.
 The density profiles $\r(x,t)$ are given then by the $x$-derivative of $v(x,t)$.  
\bigskip

\noindent\textsl{\textbf{Case 1}: $x\geq 0$, $q \geq 0$.}
Since $c_2 \leq c_1$, the minimizing $w$ of $I(x,t,q)$ is the straight line connecting $(0,q)$ to $(t,x).$ In particular, 
\be I(x,t,q)=c_2tg\left(\frac{x-q}{tc_2}\right).\ee Then the resulting $R_{+}(x,t)$ is given by
\be
R_{+}(x,t) =\begin{cases}

-tc_2g(\frac{x}{tc_2} ) &\textrm{if } \r\leq \frac{1}{2},\quad x< tc_2(1-2\r)\\[4pt]

\r x -tc_2\r(1-\r), & \textrm{if } \r\leq \frac{1}{2},\quad x\geq tc_2(1-2\r)\\[4pt]

\r x -tc_2\r(1-\r), & \textrm{if } \r > \frac{1}{2}, 
\end{cases}
\ee 

\noindent\textsl{\textbf{Case 2}: $x\leq 0$, $q \leq 0$.}
The minimizing path $w$ can either be a straight line from $(0,q)$ to $(t,x)$ or a piecewise linear path such that the set $\{t: w(t) = 0\}$ has positive Lebesgue measure. This last statement just says that the path might want to take advantage of the low rate at $x=0$. We leave the calculus details to the reader and record the resulting minimum value of $I(x,t,q)$.  Set   $B = \sqrt{c_1(c_1-c_2)}$.  
\be
I(x,t,q) =
\begin{cases}
\vspace{0.07in}
\frac{-qc_1}{4B}\left(1-\frac{B}{c_1} \right)^2 + \left( t-\frac{|x|-q}{B} \right)\frac{c_2}{4} -\frac{xc_1}{4B}\left(1+\frac{B}{c_1} \right)^2,\\

 \quad\quad\quad\quad\quad\quad\quad\quad  \text{when } -(\sqrt{Bt}-\sqrt{|x|})^2 \leq q,\, -Bt\leq x< 0\quad\\ 

c_1tg\left(\frac{x-q}{c_1t}\right) \quad\quad\quad\text{otherwise }
\end{cases}
\ee 
 The corresponding function $L_{-}(x,t)$ is given by\be
L_{-}(x,t) =
\begin{cases}

\vspace{0.07 in}
\r x -tc_1\r(1-\r), &0< \r < \r^*, x\in \bR\\

\vspace{0.07 in}
\r x -tc_1\r(1-\r), &\r^*\leq\r\leq \frac{1}{2},\quad x\leq -tc_1(\r-\r^*)\\ 

\vspace{0.07 in}
-\left( t+\frac{x}{B} \right)\frac{c_2}{4} 
+  \frac{xc_1}{4B}\left(1+\frac{B}{c_1} \right)^2, & \r^*\leq\r\leq \frac{1}{2},\quad x > -tc_1(\r-\r^*)\\  

\vspace{0.07 in}
\r x -tc_1\r(1-\r) ,& \frac{1}{2} < \r \leq 1 - \r^*,\quad x< -tc_1(\r-\r^*) \\ 

\vspace{0.07 in}
-\left( t+\frac{x}{B} \right)\frac{c_2}{4} 
+  \frac{xc_1}{4B}\left(1+\frac{B}{c_1} \right)^2, & \frac{1}{2} < \r \leq 1 - \r^*,\quad -tc_1(\r-\r^*)\leq x \\

\vspace{0.07 in}
-\left( t+\frac{x}{B} \right)\frac{c_2}{4} 
+  \frac{xc_1}{4B}\left(1+\frac{B}{c_1} \right)^2, &1 - \r^* < \r<1,\quad -Bt\leq x \\

\vspace{0.07 in}
-tc_1g\big( \frac{x}{tc_1} \big), & 1-\r^* < \r<1, \quad  -c_1t(2\r-1) \leq x < -Bt \\

\vspace{0.07 in}
\r x - c_1t\r(1-\r), & 1-\r^* < \r<1, \quad  x < -c_1t(2\r-1) \\

\end{cases}
\ee 

\medskip

\noindent\textsl{\textbf{Case 3}: $x > 0$, $q \leq 0.$}
Abbreviate $ D = c_2^2 - 4c_1c_2\r(1-\r)$. First compute the $q$-derivative\be
\Phi_q(u,q) =
\begin{cases}

\vspace{0.07 in}
\r - \frac{1}{2} - \frac{q}{2uc_1}, & -uc_1 \leq q < 0 \\ 

\r & q < -uc_1.
\end{cases}
\ee 
If $\r\geq 1/2$ then $\Phi_q$ is positive and the maximum value is when $q=0$ so we are reduced to Case $1$. If $\r< 1/2$ the maximizing $\displaystyle q = uc_1\left(2\r - 1\right).$
Then 
\be F(u) = \Phi\bigl(u, 2uc_1(\r - \tfrac{1}{2})\bigr) = -uc_1\r(1-\r) -c_2(t-u)g\left( \dfrac{x}{(t-u)c_2} \right),\notag\ee with $u$-derivative 
\be
\frac{dF}{du} =  -c_1\r(1-\r)+ \frac{c_2}{4}\left(1-\frac{x^2}{(c_2(t-u))^2}\right).
\notag\ee 
Again we need to split two cases. 
If $\r < \r^*$ (equivalently $D>0$) and $x\leq t\sqrt{D}$, the maximizing $u = t- x/\sqrt{D},$ otherwise $u=0$. If $\r^* \leq \r < \frac{1}{2}$ the derivative is negative so the maximizing $u$ is still $u=0.$  Together,
\be
L_{+}(x,t) =
\begin{cases}

\vspace{0.07 in}
-tc_1\r(1-\r)+x\Big(\frac{1}{2}-\frac{\sqrt{D}}{2c_2} \Big), & \r<\r^*, x\leq t\sqrt{D}\\

\vspace{0.07 in}
-c_2tg\big(\frac{x}{tc_2}\big) , &  \r<\r^*, x\geq t\sqrt{D}\\ 

-c_2tg\big(\frac{x}{tc_2}\big) , & \r^*\leq\r \leq 1. 

\end{cases}
\ee 

\medskip

\noindent\textsl{\textbf{Case 4}: $x \leq 0, q \geq 0.$}
We treat this case in exactly the same way as Case 3, so we omit the details. Here we need the quantity $D_1=(c_1)^2 - 4c_1c_2\r(1-\r)$ and we compute 
\be
R_{-}(x,t) =
\begin{cases}

\vspace{0.07 in}
-tc_1g\big(\frac{x}{tc_1}\big), & \r\leq\frac{1}{2}\\

\vspace{0.07 in}
-tc_2\r(1-\r)+x\Big( \frac{1}{2}+\frac{\sqrt{D_1}}{2c_1} \Big) , & \frac{1}{2}<\r,- t\sqrt{D_1}\leq x\\

-tc_1g\big(\frac{x}{tc_1}\big) , & \frac{1}{2} \leq \r \leq 1, x<- t\sqrt{D_1}\\ 
\end{cases}
\ee
Now compute $v(x,t)$ from \eqref{R-Lvelocity}. We leave the remaining details to the reader.
\end{proof}

\section{Entropy solutions of the discontinuous conservation law}
\label{pdes}

For this section, $c(x)=(1-H(x))c_1 + H(x)c_2$, $h(\r)=\r(1-\r)$ and set $F(x,\r) = c(x)h(\r)$ for the flux function of the scalar conservation law \eqref{scl} and $\widetilde{F}(x,\r) =c(x)f(\r)$ for the flux function of the particle system, where $f$ is given by \eqref{fnot}. (The difference between
$F$ and $\wt F$ is that the latter is $-\infty$ outside $0\le \rho\le 1$.) 
 
In \cite{adim-gowd-03} the authors prove that there exists a solution to the corresponding Hamilton-Jacobi equation
\be
\left\{
\begin{array}{lll}

\vspace{0.1 in}
V_t+c_1h(V_x) =0, & \textrm{if } x <0, t>0 \\

\vspace{0.1 in}
V_t+c_2h(V_x) =0, & \textrm {if } x > 0, t>0  \\

V(x,0)=V_0(x)
\end {array}
\right.
\label{HJ}
\ee 
such that $V_x$ solves the scalar conservation law \eqref{scl} with flux function $F(x,\rho)$ and $V_x$ satisfies the entropy assumptions $(E_i), (E_b)$. $V(x,t)$ is given by 
\be
V(x,t)=\sup_{w(\cdot)}\left\{  V_0(w(0)) + \int_{0}^{t} (c(w(s))h)^*(w'(s)) \,ds \right\},
\label{HJsol}
\ee 
where the supremum is taken over  piecewise linear paths $w\in C([0,t],\bR)$ that satisfy $w(t)=x.$

To apply the results of \cite{adim-gowd-03} to the profile
$\rho(x,t)$ coming from our hydrodynamic limit, we only need
to show that the variational descriptions match, in other words
that  we can replace $F$ with $\widetilde{F}$ and 
the solution is still the same.

\begin{proof}[Proof of Theorem \ref{pdecor}]
Convex duality gives  $\left(c(x)f\right)^*(y) = 
c(x)f^*\left( y/c(x)\right)$ and 
so we can rewrite  \eqref{velocityversion1} as 
\be
v(x,t)=\sup_{w(\cdot)}\left\{  v_0(w(0)) + \int_{0}^{t} (c(w(s))f)^*(w'(s)) \,ds \right\}.
\label{TNsol}
\ee 
Observe that for all $y \in \bR$
\be
(c(x)f)^*(y) \geq  (c(x)h)^*(y),
\label{flux-comparison}
\ee
with equality if and only if $y \in [-c_1,c_2]$
Since the supremum in \eqref{HJsol} and \eqref{TNsol} is taken over the same set of paths, \eqref{flux-comparison} implies that
\be
V(x,t) \leq v(x,t).
\label{v-comparison}
\ee 
The proof of the theorem is now reduced to proving that the supremum in \eqref{TNsol} is achieved when $w'(s)c(w(s))^{-1} \in [-1,1],$ giving $V(x,t)=v(x,t)$.

To this end we rewrite $v(x,t)$ once more,
this time as
\[
v(x,t)= \max\{ R_{+}(x,t), L_{+}(x,t)\}\mathbf{1}\{x\geq 0\} + \max\{ R_{-}(x,t), L_{-}(x,t)\}\mathbf{1}\{x < 0\} 
\]
where the functions $R_{\pm}$, $L_{\pm}$ 
(as in the proof of Corollary \ref{densityprofiles})
are defined by 
\be
R_{+}(x,t) = \sup_{q > 0}\left\{ v_0(q) - I(x,t,q) \right\},\textrm{ if } x>0,\quad
\ee
\be
L_{-}(x,t) = \sup_{q < 0}\left\{ v_0(q) - I(x,t,q) \right\},\textrm{ if } x<0,
\ee 
where $I(x,t,q)$ is as in  \eqref{Ixtq}, 
and 
\be
L_{+}(x,t) = \displaystyle\sup_{q < 0, u \in[0,t]} \bigg\{ v_0(q) - c_1u g\Big(\frac{-q}{uc_1}\Big) - c_2(t-u)g\Big(\frac{x}{(t-u)c_2} \Big)\bigg\} \quad \textrm{if } x\geq 0,
\label{finalL}
\ee
and 
\be
R_{-}(x,t) = \displaystyle\sup_{q > 0, u \in[0,t]}\bigg\{ v_0(q) - c_2ug\left(\dfrac{-q}{uc_2}\right) - c_1(t-u)g\left( \dfrac{x}{(t-u)c_1} \right) \bigg\}, \quad x\le 0.  
\ee

 It suffices to show that the suprema
 that define  $R_{\pm}$, $L_{\pm}$
are achieved when \be w'(s)c(w(s))^{-1} \in [-1,1].
\label{slopecondition}
\ee 
We show this for $L_{+}$. The remaining cases are similar. In \eqref{finalL},
 as before, $u$ is the time for which $w(u) = 0.$ 
Let $\Phi(u,q)$ denote the expression in braces in \eqref{finalL}
with  $q$-derivative
\be
\Phi_q(u,q) =
\begin{cases}

\vspace{0.07 in}
\r_0(q) - \frac{1}{2} - \frac{q}{2uc_1}, &-uc_1 \leq q < 0 \\ 

\r_0(q), & q < -uc_1.
\end {cases}
\label{qder}
\ee 

Observe that if $\Phi_q(u,q) = 0$ for some $q^* = q^*(u)$ then also $q^*$ maximizes $\Phi$.
Otherwise the maximum is achieved at $0$ and we are reduced to a different case. Assume that $q^*$ exists. Then by \eqref{qder}
\be
\frac{-q^*}{u} = (1- 2\r_0(q^*))c_1 < c_1.
\label{slope1}
\ee  
Therefore, the slope of the first segment of the maximizing path $w$ satisfies \eqref{slopecondition}. 

The slope of the second segment is $x(t-u)^{-1}.$ Assume  that the piecewise linear path $w$ defined by $u$ and $q^*$ is the one that achieves the supremum. Also assume $u > t-xc_2^{-1}.$ Consider the path $\tilde{w}$ with $\tilde{w}(0) = q^*$ and $\tilde{w}(t-xc_2^{-1})= 0$. Since $g$ is decreasing, we only increase the value of $\Phi$. Hence the supremum that gives $L_{+}$ cannot be achieved on $w$ and this gives the desired contradiction.
\end{proof}

\appendix       

\chapter{Basic Facts }\label{appendix}
\setcounter{theorem}{0}

In this section we report all basics facts from analysis, special functions and probability theory used throughout the thesis.

\section{Special functions and distributions}

The gamma function is 
\be 
\Gamma(s)=\int_{0}^{\infty} x^{s-1}e^{-x}\,dx.
\notag
\ee 
We only use it for positive real values of $s$.

The logarithm $\log \Gamma(s)$ is convex and infinitely differentiable on $(0,\infty)$. The derivatives are called polygamma functions 
$\Psi_n(s)= (d^{n+1}/ds^{n+1})\log \Gamma(s)$, defined for $n\in \bZ_+$. For $n\ge 1$, $\Psi_n$ is nonzero and has sign $(-1)^{n-1}$ throughout $(0,\infty)$. In particular, $\Psi_{0}(s)$ is strictly increasing and has a vertical asymptote at $s=0$. It can be given by
\be
\Psi_0(1+x) = -\gamma + \sum_{k=1}^{\infty}\frac{x}{k(x+k)}. 
\label{telehar}
\ee

One way to compute the limit \eqref{L'Hlimit} is by multiple uses of L' Hospital's rule and then an asymptotic analysis for $\Psi_1(s)$ for $s \rightarrow 0$. For the asymptotic analysis, we need
\be
\Psi_1(s) = \sum_{k=0}^{\infty}\frac{1}{(k+s)^2}.
\notag
\ee

 The Gamma$(\theta, 1)$ distribution has density 
\be
\Gamma(\theta)^{-1}x^{\theta-1}e^{-x}, \quad \theta > 0.
\label{density}
\ee 
on $\bR_+$, mean $\theta$ and variance $\theta$.

Throughout the dissertation we make use of the digamma and trigamma functions $\Psi_0, \Psi_1$ since for $X \sim$ Gamma$(\theta, 1)$ we have
\be
\Psi_0(\theta)= \bE(\log X) \quad \textrm{and} \quad \Psi_1(\theta)= \bV ar(\log X).
\ee

\section{Convex Analysis} 

 For given functions $f(x), g(x)$ we denote the convex dual
\be
f^*(r)= \sup_{x}\{rx-f(x) \},
\label{stardef}
\ee and the infimal convolution
\be
(f\square g)(x)=\inf_{y }\{f(y)+g(x-y)\}.
\label{convo}
\ee 
For lower semi-continuous convex $f$ and $g$ we have 
\be(f\square g)^{*} = f^{*}+g^{*},
\label{addual}
\ee
and double convex duality 
\be
f^{**} = f.
\label{dcd}
\ee
Also, convexity of $f$ implies that on the set $\{|f| < \infty \}$, $f$ is a.e.\ differentiable with
\be
f'(x) = \arg\max\{xu - f^{*}(u) \}.
\label{inverse}
\ee 

\section{Large deviations}
Here are some basic theorems from the theory of large deviations that we are using throughout. The limiting log-moonet generating function is given by
\be
M(u) = \lim_{n\to \infty}n^{-1}\log \bE(e^{u \sum_{i=1}^n X_i})
\label{dianeishere}
\ee

\begin{theorem}[Cram{\'e}r's Theorem]
Let $\{X_n\}_n$ be a sequence of i.i.d. real-valued
random variables. Let $\mu_n$ be the law of the sample mean $S_n/n$. Then, the
large deviation principle $LDP(\mu_n,  n, I)$ is satisfied with $I$ defined by
\be
I(a) = \sup_{u\in \bR}\{ au - M(u)\},
\ee
where $M(u)$ is the limiting log-moment generating function given by \eqref{dianeishere}.
\end{theorem}

\begin{theorem}[One sided Cram{\'e}r's Theorem]
Let $\{X_n\}_n$ be a sequence of i.i.d. real-valued
random variables. Define the one sided rate functions by 
\be
J(a) = \lim_{n \to \infty}n^{-1} \log \bP\{ S_n \ge na \},
\ee

\be
I(a) = \lim_{n \to \infty}n^{-1} \log \bP\{ S_n \le na \}.
\ee
The the two functions are given respectively by 
\be
J(a) = \sup_{u \ge 0}\{ au - M(u)\},
\ee
\be
I(a) = \sup_{u \le 0}\{ au - M(u)\}.
\ee
where $M(u)$ is given by \eqref{dianeishere}
\end{theorem}

%
%

\end{document}